\documentclass[11pt,reqno]{amsart}
\usepackage[T1]{fontenc}
\usepackage[utf8]{inputenc}
\usepackage{lmodern}
\usepackage[a4paper,margin=29mm]{geometry}
\usepackage{amsmath,amssymb,amsthm,mathtools}
\usepackage{microtype}
\usepackage{enumitem}
\usepackage{xcolor}
\definecolor{linkblue}{RGB}{30,65,110}
\usepackage[colorlinks=true,linkcolor=linkblue,citecolor=linkblue,
 urlcolor=linkblue,breaklinks=true]{hyperref}
\usepackage{url}
\numberwithin{equation}{section}
\newtheorem{theorem}{Theorem}[section]
\newtheorem{proposition}[theorem]{Proposition}
\newtheorem{lemma}[theorem]{Lemma}
\newtheorem{corollary}[theorem]{Corollary}
\theoremstyle{definition}

\theoremstyle{remark}

\allowdisplaybreaks[2]
\setlist[enumerate]{leftmargin=*,itemsep=3pt,topsep=5pt}
\setlist[itemize]{leftmargin=*,itemsep=3pt,topsep=5pt}
\title[Logarithmic concentration on the two-dimensional torus]
{Logarithmic concentration for the focusing cubic Schr\"odinger equation on the two-dimensional torus}
\author{Xi Chen}
\date{}
\address{Department of Mathematics and Computer Science, University of Basel, Spiegelgasse 1, 4051 Basel, Switzerland}
\subjclass[2020]{35Q55, 35B40, 35B44, 35B45}
\keywords{Focusing nonlinear Schr\"odinger equation, critical mass,
infinite-time concentration, Sobolev growth, terminal-value construction}
\begin{document}
\begin{abstract}
We construct a smooth solution of the focusing cubic nonlinear
Schr\"odinger equation on a square two-dimensional torus which is global
in both time directions and concentrates at one point as time tends to
positive infinity. Its mass is the mass of the Euclidean ground state
and its energy is zero. If \(\lambda(t)\) denotes its concentration
scale and \(L\) the period, then
\(L/\lambda(t)=2\log t+\frac{11}{2}\log\log t+O(1)\).
The solution approaches a phased, periodized ground state with an error
tending to zero in every fixed Sobolev space. In particular, its
\(H^s\) norm is asymptotic to an explicit constant times
\((\log t)^s\) for every \(s>0\). We also prove uniqueness, up to
constant phase and time translation, within the quantitative
single-profile asymptotic class used in the construction.
The mechanism is the exponentially small interaction of the profile
with its periodic images. Exact mass and energy constraints select
the concentrating branch, while a positive energy on the moving
constrained space controls the terminal-value problem at every fixed
regularity order.
\end{abstract}
\maketitle

\section{Introduction}\label{sec:introduction}

We consider the focusing cubic nonlinear Schr\"odinger equation
\begin{equation}\label{eq:nls}
 i\partial_tu+\Delta u+|u|^2u=0,
 \qquad (t,x)\in\mathbb R\times\mathbb T_L^2,\qquad
 \mathbb T_L^2=\mathbb R^2/(L\mathbb Z^2).
\end{equation}
For a regular solution the mass and energy are conserved:
\begin{equation}\label{eq:conservation}
 M(u)=\int_{\mathbb T_L^2}|u|^2,\qquad
 E(u)=\frac12\int_{\mathbb T_L^2}|\nabla u|^2
                  -\frac14\int_{\mathbb T_L^2}|u|^4 .
\end{equation}
All integrals use unnormalized Lebesgue measure. The Euclidean ground
state is the positive radial solution \(Q\) of
\[
 -\Delta Q+Q-Q^3=0\quad\hbox{on }\mathbb R^2.
\]
For a domain \(\Omega\), the subscript in
\(M_\Omega(v)=\int_\Omega |v|^2\) specifies where the mass is taken.
We write \(m=M_{\mathbb R^2}(Q)\). The sharp
Gagliardo--Nirenberg inequality identifies \(m\) as the critical
mass for concentration \cite{Weinstein83}; the uniqueness of the
positive ground state is due to Kwong \cite{Kwong89}.

The purpose of this paper is to construct a critical-mass solution which
concentrates only at infinite time. Its growing Sobolev norms come from
a single shrinking spatial profile. Thus growth already occurs in
\(H^1\), where the focusing energy is no longer coercive at mass \(m\).
The solution is smooth at every finite time, and one datum gives the
asymptotic formula simultaneously at all fixed Sobolev orders.

For \(\lambda>0\), let
\[
 \mathcal Q_\lambda(x)=\lambda^{-1}
       \sum_{j\in\mathbb Z^2}Q\!\left(\frac{x-Lj}{\lambda}\right).
\]
We use the torus Fourier convention
\[
 \widehat v(n)=L^{-2}\int_{\mathbb T_L^2}
                    v(x)e^{-2\pi in\cdot x/L}\,dx,\qquad
 \|v\|_{H^s}^2=L^2\sum_{n\in\mathbb Z^2}
       (1+|2\pi n/L|^2)^s|\widehat v(n)|^2 .
\]
The homogeneous norm is defined by omitting the \(1\).
The Euclidean Fourier transform is
\(\widehat f(\xi)=\int_{\mathbb R^2}f(x)e^{-ix\cdot\xi}\,dx\),
with Parseval factor \((2\pi)^{-2}\).

In particular,
\[
 \|f\|_{\dot H^s(\mathbb R^2)}^2
   =(2\pi)^{-2}\int_{\mathbb R^2}|\xi|^{2s}
                                      |\widehat f(\xi)|^2\,d\xi .
\]

\begin{theorem}\label{thm:main}
For every \(L>0\), there is a datum
\(u_0\in C^\infty(\mathbb T_L^2)\) whose solution of
\eqref{eq:nls} belongs to
\[
 C^\infty(\mathbb R\times\mathbb T_L^2)
 \cap\bigcap_{k\ge0}C(\mathbb R;H^k(\mathbb T_L^2))
\]
and has \(M(u)=m\), \(E(u)=0\).
The datum and the solution are invariant under the two coordinate
reflections and the exchange of the coordinates.
There are smooth functions \(\lambda(t)>0\) and \(\gamma(t)\),
defined for all sufficiently large positive times, such that
\begin{equation}\label{intro:scale}
 \frac{L}{\lambda(t)}
 =2\log t+\frac{11}{2}\log\log t+O(1)
                  \qquad(t\to+\infty)
\end{equation}
and, for every fixed integer \(k\ge0\),
\begin{equation}\label{intro:profile}
 \|u(t)-e^{i\gamma(t)}\mathcal Q_{\lambda(t)}\|_{H^k}
                         \longrightarrow0.
\end{equation}
Consequently, for every fixed real \(s>0\),
\begin{equation}\label{intro:growth}
 \|u(t)\|_{H^s(\mathbb T_L^2)}
 \sim \|Q\|_{\dot H^s(\mathbb R^2)}
               \left(\frac{2\log t}{L}\right)^s .
\end{equation}
\end{theorem}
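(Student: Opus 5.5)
The plan is a modulation analysis around the periodized ground state \(\mathcal Q_\lambda\), followed by a terminal-value (compactness) construction. I would impose the symmetries (both coordinate reflections and the coordinate exchange) from the start, since they preserve the flow and remove the translation and Galilean modulation directions. The ansatz is
\[
u(t,x)=e^{i\gamma}\lambda^{-1}\bigl(P_b+\varepsilon\bigr)(s,x/\lambda),\qquad \frac{ds}{dt}=\lambda^{-2},
\]
where \(P_b\) is the periodization of a pseudo-conformally modified profile \(Q e^{-ib|y|^2/4}\), corrected to the needed order. The parameters \(\lambda\), \(b\), \(\gamma\) are fixed by orthogonality conditions against the generalized kernel of the linearized operator. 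Only the scaling/phase/\(|y|^2Q\) directions remain, because the symmetries kill the \(\nabla Q\) and \(yQ\) directions.

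The first key computation is the interaction of \(Q\) with its four nearest periodic images at distance \(L/\lambda=:\ell\). Using the tail \(Q(r)\sim c_Q r^{-1/2}e^{-r}\), I expect the energy expansion, after scaling and with error terms controlled by \(\|\varepsilon\|\), to read
\[
\lambda^2E(u)=\frac{b^2}{8}\||y|Q\|_{L^2}^2-\kappa\,\ell^{3/2}e^{-\ell}\bigl(1+o(1)\bigr),\qquad \kappa>0.
\]
The exponent \(3/2\) must come out of the exact overlap integral \(\int Q^3(y)\,Q(y-\ell e_1)\,dy\) together with the correction needed to keep \(M=m\) exactly. Imposing \(M=m\) and \(E=0\) then selects the branch \(b\approx c\,\ell^{3/4}e^{-\ell/2}\). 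Combined with \(\lambda_s/\lambda=-b\), this gives
\[
\dot\ell\approx \frac{\ell^{2}b}{L^{2}}\cdot L\sim \ell^{11/4}e^{-\ell/2}.
\]
Integrating, \(e^{\ell/2}\ell^{-11/4}\sim t\), hence \(\ell=2\log t+\tfrac{11}{2}\log\log t+O(1)\), which is \eqref{intro:scale}.

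For the construction, I would take \(t_n\to\infty\) and data \(u(t_n)=e^{i\gamma_n}\lambda_n^{-1}P_{b_n}(\cdot/\lambda_n)\). Here \((\lambda_n,b_n)\) lie on the ideal trajectory and are adjusted by a small finite-dimensional correction so that \(M=m\) and \(E=0\) exactly. I would then integrate backward to a fixed time \(T_0\).

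The main obstacle is the uniform bootstrap on \([T_0,t_n]\). I would run it with a mixed energy/virial functional built from the linearized energy \(\langle L_+\varepsilon_1,\varepsilon_1\rangle+\langle L_-\varepsilon_2,\varepsilon_2\rangle\). This functional is coercive on the space constrained by the orthogonality conditions, with the mass and energy constraints removing the negative direction of \(L_+\) and the kernel directions. It would be localized and corrected by a time-dependent virial term so that its time derivative has a good sign up to errors of size \(b\,\ell^Ce^{-\ell}\). Since \(\int^\infty b\,ds<\infty\) in these variables, this gives \(\|\varepsilon\|_{H^1}\to0\) as \(t\to\infty\), uniformly in \(n\). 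For every \(k\), I would commute \(\partial^k\) through the equation to build higher-order energies. These are coercive modulo lower-order terms because the linearized operator is a lower-order perturbation of \(-\Delta\) at high frequencies, and this yields uniform \(H^k\) bounds on \(\varepsilon\). A weak limit of \(u_n(T_0)\), with strong convergence by the uniform smallness of tails in every \(H^k\), then gives \(u_0\). Continuity of the flow in \(H^k\) yields \eqref{intro:profile}.

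To show the solution is global for all negative times and smooth, I would argue a priori. At mass \(m\), any loss of \(H^1\) compactness forces closeness to some \(e^{i\gamma}\mathcal Q_\lambda\) by Weinstein's variational characterization. The same energy identity with \(E=0\) then gives \(|b|\lesssim \ell^{3/4}e^{-\ell/2}\), so \(|\dot\ell|\lesssim \ell^{C}e^{-\ell/2}\). This rules out \(\ell\to\infty\) in finite time, in either time direction. The resulting bounds on \(\|u\|_{H^1}\) on compact intervals give persistence of \(H^k\) regularity (Bourgain's theory on \(\mathbb T^2\)) and \(C^\infty\) smoothness via the equation.

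Finally, \eqref{intro:growth} follows from \eqref{intro:profile} and the identity \(\|\mathcal Q_\lambda\|_{H^s}^2=\lambda^{-2s}\bigl(\|Q\|_{\dot H^s}^2+o(1)\bigr)\), obtained by Poisson summation. For non-integer \(s\), the error \(\|u-e^{i\gamma}\mathcal Q_\lambda\|_{H^s}\) is handled by interpolation between integer orders, and then I substitute \(\lambda\sim L/(2\log t)\).
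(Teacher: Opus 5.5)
Your overall architecture matches the paper's: a chirped periodized profile, exact mass and zero energy selecting $b$, terminal data, a backward bootstrap and compactness. However, the higher-order step fails as written, and the scalar computation reaches the right rate through two errors that cancel.

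\textbf{The higher-order step.} In renormalized variables the coupling $2|P|^2\varepsilon+P^2\overline\varepsilon$ has coefficients of size one. Commuting $\partial^k$ through the linearized equation therefore leaves terms bounded at best by $C_k\|\varepsilon\|_{H^k}\|\varepsilon\|_{H^{k-1}}$, with no small factor; for the $P^2\overline\varepsilon$ part even the top-order term is not skew. The backward interval is exponentially long in $s$, since $ds=d\ell/(\ell b)$ with $b\sim\ell^{-1/4}e^{-\ell/2}$. Writing $Y_k=\|\varepsilon\|_{H^k}$, you get $dY_k/d\ell\lesssim Y_{k-1}/(\ell b)$.
\begin{itemize}
\item From $Y_1\lesssim b/\ell$ this gives at best $Y_2\lesssim 1/\ell$. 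That is not exponentially small, and after the polynomial Gronwall factor it need not even be uniform in $\ell_n$.
\item At the next order the source integrates to size $e^{\ell_n/2}$ up to powers.
\end{itemize}
Coercivity ``modulo lower order'' does not help, because the obstruction is in the time derivative, not the coercivity. Without uniform high-order bounds at the common section you get neither a smooth datum $u_0$ nor \eqref{intro:profile} for large $k$.

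The missing idea is to never apply ordinary derivatives. Project the error with the symplectic projection $\Pi$ onto the complement of the four modulation directions (the profile and the three tests). It satisfies $\Pi J=J\Pi^*$. Then use the graph energies $\sum_{a\le j}\langle\mathcal L\mathcal A^a z,\mathcal A^a z\rangle$, where $\mathcal L$ is the action Hessian, $z$ the projected error, and $\mathcal A=\Pi J\mathcal L\Pi$. Since $\langle\mathcal L z,\mathcal A z\rangle=0$ on $\operatorname{Ran}\Pi$, only $\mathcal L_s,\Pi_s,\mathcal A_s=O(b)$ survive, which costs only a power of $\ell$. An elliptic estimate on $\operatorname{Ran}\Pi$ makes these energies equivalent to the $H^{2j+1}$ norm, and a tame bound keeps the inequality linear.

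\textbf{The scalar computation.} Once the mass is made exactly $m$, the first-order mass overlap $A_D\sim 2c_QI_Q\,\ell^{1/2}e^{-\ell}$ cancels, with $I_Q=\int e^{y_1}Q^3$. The energy deficit is $\tfrac12 C_D\sim 2c_QI_Q\,\ell^{-1/2}e^{-\ell}$, not $\kappa\ell^{3/2}e^{-\ell}$. Hence $b\sim\sqrt{2\alpha}\,\ell^{-1/4}e^{-\ell/2}$, not $\ell^{3/4}e^{-\ell/2}$. Also, $\ell_s=\ell b$ and $dt=\lambda^2 ds$ give $\dot\ell=\ell^3 b/L^2$, not $\ell^2 b/L$. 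Your $b$ with the correct conversion would produce $\tfrac{15}{2}\log\log t$ instead of $\tfrac{11}{2}\log\log t$.

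\textbf{The quantitative $H^1$ bound.} Your claim $\int^\infty b\,ds<\infty$ is false: $\int^\infty b\,ds=\int d\ell/\ell=\infty$. So terms with coefficient $b$ cost a factor $\ell^C$, and the $H^1$ conclusion must be quantitative, not merely $\|\varepsilon\|_{H^1}\to0$. The zero-energy constraint only gives
\[
|b-B(\ell)|\lesssim\bigl(\|\varepsilon\|_{H^1}^2+R\|\varepsilon\|_{H^1}\bigr)/B,\qquad R=\ell^{-1/2}e^{-\ell},
\]
where $B(\ell)$ is the zero-energy branch. Keeping $b$ on the branch therefore needs $\|\varepsilon\|_{H^1}=o(b)$. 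The paper gets $\|\varepsilon\|_{H^1}\lesssim b/\ell$ because the forcing enters the quadratic energy as $\|\varepsilon\|\cdot O(b^2)$. An $\varepsilon$-independent error $b\,\ell^Ce^{-\ell}$, integrated as you propose, only gives $\|\varepsilon\|\lesssim\ell^{C'}b$ with an uncontrolled power, which does not close.

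\textbf{Smaller points.}
\begin{itemize}
\item Normalize time at a common scale rather than at a fixed $T_0$. The bootstrap controls the terminal solutions' travel times only up to unbounded additive errors, so the scale at a fixed $T_0$ is not controlled.
\item Your modulation-based exclusion of finite-time concentration is a workable alternative. It is heavier than the paper's localized moment bound $|V_p'|\le CV_p$.
\end{itemize}
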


The error in \eqref{intro:profile} tends to zero in the physical
Sobolev norm; it is stronger than convergence after rescaling in
\(H^1\). The constants in its proof may depend on \(k\).
No uniform assertion as \(k\to\infty\) is intended.
We also obtain the constant term in \eqref{intro:scale} and a
convergent phase correction; see
\eqref{con:scale-refined} and \eqref{con:phase-limit}.

The theorem concerns the standard autonomous equation on a compact
domain. Its mechanism differs from transfer between many weakly
interacting Fourier modes: the mass concentrates at one point, and
the gradient norm itself diverges. The distinction is relevant when
comparing it with the following results on long-time growth and
concentration.

The long-time behavior of nonlinear Schr\"odinger equations on compact spaces
has two rather different aspects.  The local theory, developed on tori by
Bourgain~\cite{Bourgain93} and on compact manifolds by Burq, G\'erard and
Tzvetkov~\cite{BGT04}, can be combined with conservation laws to give global
solutions for the defocusing cubic equation in dimension two.  It gives much less
information about the distribution of a solution among high frequencies.
Bourgain's work on upper bounds for Sobolev norms~\cite{Bourgain96} and his
discussion of long-time growth~\cite{Bourgain00} helped formulate this question.
For the cubic defocusing equation on \(\mathbb T^2\), Colliander, Keel,
Staffilani, Takaoka and Tao~\cite{CKSTT10} constructed smooth solutions whose
\(H^s\) norm, for \(s>1\), grows from an arbitrarily small value to an
arbitrarily large prescribed value.  Guardia and Kaloshin~\cite{GK15} obtained
polynomial upper bounds on the time needed for a prescribed relative
amplification.  These are finite-amplification results: the initial datum
depends on the desired growth.

There are settings in which a single orbit is known to have unbounded Sobolev
norms.  Hani~\cite{Hani14} proved this for a family of periodic cubic
Hamiltonian equations obtained by retaining interactions with bounded
nonresonance; the family includes the resonant equation, but its
infinite-growth theorem does not include the full cubic nonlinearity.
On \(\mathbb R\times\mathbb T^d\), Hani, Pausader, Tzvetkov and
Visciglia~\cite{HPTV15} used modified scattering to realize unbounded resonant
orbits as solutions of cubic NLS for \(d\ge2\), with unbounded norms at
sufficiently high Sobolev regularity.
Another source of examples is the cubic Szeg\H{o} equation, introduced by
G\'erard and Grellier~\cite{GG10}.  Their nonlinear Fourier
transform~\cite{GG17} yields smooth solutions on the circle with unbounded
\(H^s\) norms for every \(s>1/2\) along sequences of times, whereas
Pocovnicu~\cite{Pocovnicu11} obtained explicit rational examples on the real
line.  The connection with the defocusing cubic half-wave equation is
quantitative on long finite intervals for small periodic data supported on
nonnegative Fourier modes~\cite{GG12}.  For the focusing half-wave equation on the
line, G\'erard, Lenzmann, Pocovnicu and Rapha\"el~\cite{GLPR18} constructed a
two-soliton solution with a large transient increase of its \(H^1\) norm
followed by saturation. The infinite cascades for the half wave Schr\"odinger equation have been eastablished in \cite{ChenPlane,XuWaveguide}. More recently, the author has proved the infinite cascades for the cubic half-wave equation on the line and on the torus \cite{Chen26.1,Chen26.2}.

The focusing equation also permits growth through concentration in physical
space.  On \(\mathbb R^2\), the sharp interpolation inequality of
Weinstein~\cite{Weinstein83}, together with the uniqueness theorem of
Kwong~\cite{Kwong89}, identifies the ground state \(Q\) and the mass
\(\|Q\|_2^2\) that governs the critical problem.  The standard Cauchy theory
and conservation laws are reviewed in~\cite{Cazenave03}.
Below this mass, Dodson~\cite{Dodson15} proved global existence and scattering
for all \(L^2\) data.  At the threshold, Merle~\cite{Merle93} classified
finite-time \(H^1\) blow-up solutions as pseudoconformal transforms of the ground
state, up to the symmetries of the Euclidean equation.  For suitable data
with mass slightly above this threshold, the analysis of Merle and
Rapha\"el~\cite{MR05,MR06} describes the
log--log blow-up regime, while Bourgain and Wang~\cite{BourgainWang97}
constructed solutions with pseudoconformal blow-up and a regular remainder.
These results concern the Euclidean equation; its continuous scaling and
pseudoconformal symmetries are not symmetries of NLS on a fixed torus. Global well-posedness of the the cubic nonlinear Schrödinger equation on $\mathbb{T}^2$ has been well studied in \cite{HK2026}.

Concentration can nevertheless survive changes in the spatial setting.
Burq, G\'erard and Tzvetkov~\cite{BGT03} constructed critical-mass
finite-time blow-up solutions in the interior of a plane domain, and their
construction also applies to the flat two-dimensional torus.
Banica~\cite{Banica04} studied the blow-up rate and possible boundary
concentration for the Dirichlet problem.
For inhomogeneous mass-critical NLS, Banica, Carles and
Duyckaerts~\cite{BCD11} constructed minimal-mass blow-up solutions under
suitable local flatness assumptions at a maximum of the nonlinear coefficient.
In dimension two, Rapha\"el and Szeftel~\cite{RaphaelSzeftel11} proved existence
and uniqueness, up to phase, at a prescribed nondegenerate maximum of the
nonlinear coefficient, with prescribed blow-up time and admissible energy.
Perturbations can also change
the rate itself: Le~Coz, Martel and Rapha\"el~\cite{LeCozMR16} obtained
critical-mass blow-up for a focusing double-power NLS with a rate determined
by the lower-power term.  These works show why small terms in a rescaled
equation must be retained when they act on a neutral direction of the
solitary wave.

The closest precedent for the mechanism considered here is the work of
Martel and Rapha\"el~\cite{MartelRaphael18}.  For the cubic focusing equation
on \(\mathbb R^2\), they constructed, for every integer \(K\ge2\), a
solution consisting asymptotically of \(K\) same-phase ground-state bubbles
whose centers approach a regular polygon and whose common scale is
asymptotic to \(1/\log t\).  The interaction changes the leading motion.
This differs from constructions with asymptotically decoupled components,
including Merle's prescribed multiple blow-up points~\cite{Merle90}, the
multi-solitary waves of Martel and Merle~\cite{MartelMerle06}, and their
supercritical counterparts constructed by C\^ote, Martel and
Merle~\cite{CMM11}.  Long-range interactions also determine the trajectories
of the Hartree two-solitons constructed by Krieger, Martel and
Rapha\"el~\cite{KMR09}.  On the torus, one concentrating bubble interacts
with its periodic translates.  The proof below keeps these exponentially
small interactions and computes their effect on the critical scaling
direction.

The construction uses a terminal problem near the prescribed concentrating
profile, followed by a compactness argument.  Its modulation argument draws on
Weinstein's work on ground states~\cite{Weinstein85} and on the blow-up
constructions discussed above.  The method is not confined to local dispersion:
Krieger, Lenzmann and Rapha\"el~\cite{KLR13} constructed critical-mass
blow-up for the half-wave equation without a pseudoconformal symmetry.
Nor is infinite-time concentration specific to NLS.
For the critical generalized Korteweg--de~Vries equation, Martel, Merle and
Rapha\"el~\cite{MMR15} constructed solutions near the soliton with polynomial
or exponential grow-up, driven by prescribed spatial tails.
Here the forcing comes from the periodic geometry, and the logarithmic
Sobolev inequality of Br\'ezis and Gallou\"et~\cite{BrezisGallouet80} is used
in the continuation argument at critical mass.  Since the gradient norm
itself diverges, the solution constructed
here has a different concentration mechanism from the bounded-\(H^1\)
energy transfers in the defocusing problem.

\subsection*{The construction and its uniqueness}

The periodic geometry produces a weak interaction even for one
concentrating profile. At rescaled period \(D=L/\lambda\), the
overlap in the mass of two neighbouring images has size
\(D^{1/2}e^{-D}\). The corresponding nonlinear overlap has size
\(D^{-1/2}e^{-D}\). Correcting the mass before computing the energy
removes the larger first-order term and leaves a negative energy of
the latter size. A small quadratic phase can compensate for it.
The resulting positive phase parameter satisfies
\[
 b\sim \sqrt{2\alpha}\,D^{-1/4}e^{-D/2},
 \qquad
 \alpha=\frac{8c_Q I_Q}{I_2},\quad
 I_Q=\int_{\mathbb R^2}e^{y_1}Q^3,\quad
 I_2=\int_{\mathbb R^2}|y|^2Q^2,
\]
where \(Q(r)\sim c_Qr^{-1/2}e^{-r}\).
In renormalized time \(dt/ds=\lambda^2\), the scale satisfies
\(D_s\sim Db\). Hence
\[
 t\sim\sqrt{2/\alpha}\,L^2D^{-11/4}e^{D/2},
\]
which explains both logarithmic terms in \eqref{intro:scale}.

To turn this calculation into an exact solution, we prescribe data at
a large scale \(D_n\), integrate backwards to a fixed scale \(D_0\),
and then let \(D_n\to\infty\). The large physical travel time makes
an ordinary Lipschitz estimate for the flow unsuitable. Instead, we
project onto the symplectic complement of the modulation directions.
The constrained Hessian is positive, and its Hamiltonian evolution
cancels in the differentiated energy. Retaining the change of the
projection gives an estimate with only a polynomial loss in \(D\).
The exponentially small source can then be integrated uniformly from
the terminal scale. Higher graph energies satisfy a tame inequality,
so the same terminal family is bounded at every fixed regularity
order on the same initial section.

There is also a uniqueness statement for the resulting asymptotic
class. We spell out the class because the leading norm asymptotic
\eqref{intro:growth} alone does not specify it.
Let \(\rho\) be the radial decaying solution of
\[
 (-\Delta+1-3Q^2)\rho=|y|^2Q/4,
\]
and define
\[
 Z_{D,b}f(y)=\sum_{j\in\mathbb Z^2}
            e^{-ib|y-Dj|^2/4}f(y-Dj),\qquad
 P_{D,b}=Z_{D,b}(Q+a(D,b)\rho),
\]
where \(a(D,b)\) is chosen to give mass \(m\) in one periodic
cell. This choice is unique in the small neighbourhood used here.
Set \(B_0(D)=\sqrt{2\alpha}D^{-1/4}e^{-D/2}\).
Here \(\mathbb T_D^2=\mathbb R^2/(D\mathbb Z^2)\), and
the pairings below are
\(\langle f,g\rangle=\operatorname{Re}\int_{\mathbb T_D^2}f\overline g\).
For a representation
\begin{equation}\label{intro:class-decomposition}
 u(t,x)=e^{i\gamma(t)}\lambda(t)^{-1}
              (P_{D(t),b(t)}+\varepsilon(t))(x/\lambda(t)),
       \qquad D=L/\lambda,
\end{equation}
we impose the real orthogonality conditions
\begin{equation}\label{intro:class-constraints}
 \langle\varepsilon,Z_{D,b}(|y|^2Q)\rangle
 =\langle\varepsilon,iZ_{D,b}\rho\rangle
 =\langle\varepsilon,iZ_{D,b}\Lambda Q\rangle=0,
 \qquad \Lambda Q=Q+y\cdot\nabla Q .
\end{equation}
These equations fix the nearby scale, phase and quadratic phase
parameter.

\begin{theorem}\label{thm:uniqueness}
Let \(u_1,u_2\) be smooth solutions of \eqref{eq:nls}, even in
each coordinate, with mass \(m\) and energy zero. Suppose that,
on positive-time tails, both admit
\eqref{intro:class-decomposition}--\eqref{intro:class-constraints}
about the same centre, with \(D(t)\to\infty\),
\[
 b/B_0(D)=1+O(D^{-1}),\qquad
 \|\varepsilon(t)\|_{H^k(\mathbb T_{D(t)}^2)}
       \le C_k B_0(D(t))/D(t)
\]
for every fixed integer \(k\ge0\).
Then there are constants \(\theta,\tau\in\mathbb R\) such that
\[
 u_1(t)=e^{i\theta}u_2(t+\tau)
\]
on their common translated lifespan.
\end{theorem}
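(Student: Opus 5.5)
First normalize the two solutions so that they can be compared. Let \((D_i,b_i,\gamma_i,\varepsilon_i)\) be the modulation parameters of \(u_i\). Both have \(D_i(t)\to\infty\) monotonically on the tail, because \(D_s\sim Db>0\) follows from the modulation equations with the stated bounds. I therefore choose \(\tau\) so that the scales of \(u_1(t)\) and \(u_2(t+\tau)\) agree at one large reference time, and \(\theta\) so that the phases \(\gamma\) agree there. In place of \(t\), I parametrize each solution by its scale \(D\) and compare \(w=\varepsilon_1-\varepsilon_2\) at equal \(D\), together with the parameter differences \(\delta b=b_1-b_2\) and \(\delta\gamma\). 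The modulation system from the construction gives
\[
D_s=Db\,(1+O(D^{-1})),\qquad b_s=-\tfrac{b^2}{2}+\alpha D^{-1/2}e^{-D}+\dots,
\]
with errors controlled by \(\|\varepsilon\|\). In the variable \(D\) this becomes a first-order equation for \(b(D)\) whose leading part is linear in the deviation from \(B_0(D)\). Its linearization is \(\partial_D(\delta b)\approx -(1/D)\,\delta b\cdot(\text{positive})\) plus a forcing term linear in \(w\). Both the unstable direction and the time-translation direction are absorbed by the normalization, because matching \(D\) as the clock removes the time translation.

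The core is an energy estimate for the difference, run from \(D=\infty\) backwards. This is the same estimate used for the terminal-value problem. I use the projection onto the symplectic complement of the modulation directions. The constrained Hessian (linearized energy at \(P_{D,b}\) restricted to the orthogonality conditions \eqref{intro:class-constraints}) is coercive. I build a mixed functional \(\mathcal F(w)=\tfrac12\langle H_{D,b}w,w\rangle\), corrected by mass and energy terms. The constraints \(M=m\) and \(E=0\) are identical for both solutions, so they control the components of \(w\) along the generalized kernel that the Hessian misses. Differentiating \(\mathcal F\) along the flow, the Hamiltonian part cancels. What remains are terms from the changing projection and from the parameter differences. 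These are of size \(D^{C}(b+|\delta b|)\,\|w\|_{H^1}^2\) plus cross terms. Dividing by \(dD/ds\sim Db\) gives
\[
\Bigl|\frac{d}{dD}\mathcal F\Bigr|\lesssim D^{C}\,\mathcal F+\dots .
\]

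The main obstacle is that this polynomial loss cannot be integrated from \(\infty\) to produce \(w=0\) directly, since \(\int^\infty D^{C}dD\) diverges. To fix this, I renormalize with a weight. The a priori bounds give \(\|w\|\lesssim B_0(D)/D\), which is exponentially small. The equation for \(w\) has no source term, because both solutions solve NLS exactly. So \(w\) satisfies a homogeneous linear inequality with coefficients that are at most polynomial in \(D\), while the a priori decay is \(e^{-D/2}\). I then apply the Grönwall argument to \(e^{\kappa(D)}\mathcal F\) with \(\kappa(D)=C'D^{C+1}\). Integrating from \(D\) to \(D'\) gives
\[
\mathcal F(D)\le e^{C'(D'^{C+1}-D^{C+1})}\mathcal F(D').
\]
This bound is too weak, because it grows in \(D'\). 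So I instead need the sharper statement that the loss is genuinely only logarithmic in \(D\) once the projection is chosen so that its derivative is \(O(b)\), giving \(d\mathcal F/dD\lesssim D^{-1}\cdot C\,\mathcal F\). Then \(\mathcal F(D)\le (D'/D)^C\mathcal F(D')\lesssim (D'/D)^{C}B_0(D')^2\to0\) as \(D'\to\infty\), so \(w\equiv0\). The crucial step I expect to be hard is establishing this \(O(b)\)-per-unit-\(s\) loss, i.e.\ \(O(1/D)\) per unit \(D\). It requires showing that the derivatives of the modulation directions \(Z_{D,b}(\cdot)\) and of \(a(D,b)\) are \(O(Db)\) in \(s\), and that the parameter differences \(\delta b,\delta\gamma\) are bounded by \(\|w\|\) through the orthogonality conditions. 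I would handle these first, the latter by differentiating the orthogonality relations and inverting the nondegenerate Gram matrix formed from \(|y|^2Q,\rho,\Lambda Q\).

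Finally, \(w=0\) together with \(\delta b=0\) and matched \(D,\gamma\) at the reference point gives \(u_1(t)=e^{i\theta}u_2(t+\tau)\) on the tail. Uniqueness of the smooth Cauchy problem on \(\mathbb T_L^2\) then extends the identity to the common lifespan. Evenness in each coordinate is used throughout to eliminate the translation modes \(\nabla Q\) and \(iyQ\), which is why no translation parameter appears in \eqref{intro:class-constraints}.
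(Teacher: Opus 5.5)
There is a genuine gap, and it lies in your claim that the difference satisfies a homogeneous inequality whose coefficients are at most polynomial in $D$. Write $z_j=e^{-i\gamma_j}u_j$ at each solution's own crossing of a common scale $D$, $v=z_1-z_2$, and $w_j$ for the individual remainders.

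Comparing at equal $D$ removes the time shift, but it does not remove the difference in clock rates. Each field satisfies $(z_j)_D=\kappa_j\{F_\lambda(z_j)+\omega_jJz_j\}$ with $\kappa_j=1/(D(b_j-\mu_j))$. The difference therefore contains the term $\delta\kappa\,\Pi_1\{F_\lambda(z_2)+\omega_2Jz_2\}$, where $\delta\kappa\approx-(\delta b-\delta\mu)/(DB^2)$.

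The orthogonality conditions only give $|\delta b|\lesssim\|v\|$, with an $O(1)$ constant. The smallness of $\delta b$ comes instead from the exact zero-energy constraint, $|\delta b|\lesssim\frac{N_*+R}{B}\|\varepsilon_1-\varepsilon_2\|_{H^1}$. At the class scale $N_*\sim B/D$ this is only $\|v\|/D$. Meanwhile $\|\Pi_1F_\lambda(z_2)\|_{1,\lambda}$ is controlled only by $\|w_2\|_{3,\lambda}\lesssim B/D$.

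The clock term is therefore bounded only by about $\|v\|/(D^3B)\sim D^{-11/4}e^{D/2}\|v\|$ per unit $D$. The backward Gronwall factor is then doubly exponential, and it overwhelms any boundary value of size $B(D')^2$. Your bookkeeping ``$D^C(b+|\delta b|)\|w\|^2$'' assumes every term carries a factor $b$ per unit $s$; this one does not. Conversely, the $O(1/D)$ projection loss you single out as the crux is routine: differentiate the explicit profiles at fixed physical image centres, using $(\log\lambda)_D=-1/D$ and $b_D=O(B)$.

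The missing idea is to improve each individual remainder from $O(B/D)$ to $O(D^pB^2)$ before any comparison is made.
\begin{enumerate}
\item Replace the modulation defects in the profile residual by their pure-profile values. This gives an explicit, slowly varying source $g^0=O(B^2)$ in $W$.
\item Subtract the quasistatic correction $r^0=-\mathcal A^{-1}g^0$. Here $\mathcal A=\Pi J\mathcal L\Pi$ is inverted on $W$ by Lax--Milgram and the constrained coercivity. Its covariant derivative $\Pi r^0_s$ is $O(D^pB^3)$, since $r^0$ depends only on the slowly moving parameters.
\item Then $y-r^0$ solves the projected equation with source $O(B\|y-r^0\|+D^pB^3)$. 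The graph-energy Gronwall, run from the infinite-orbit boundary, gives $\|w\|_{k,\lambda}\lesssim D^{p_k}B^2$.
\end{enumerate}
With these improved bounds, $|\delta b|+|\delta\mathbf m|\lesssim D^pB\|v\|$ and $\|\Pi_1F_\lambda(z_2)\|_{1,\lambda}\lesssim D^pB^2$. The clock term becomes $O(D^{p-1}B\|v\|)$. The difference energy $\mathfrak D=\langle\mathcal L_1\xi,\xi\rangle$ then obeys $|\mathfrak D_D|\le C\mathfrak D/D$, and the argument closes as in your final step.

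Note also that $\delta\gamma$ is not bounded by $\|w\|$ through any Gram matrix. It is a symmetry direction, eliminated by working with phase-removed fields. It becomes constant only once $\delta\kappa=\delta\omega=0$ has been established.
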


Theorem~\ref{thm:main} produces a solution in this class.
If time and phase are normalized at the same outgoing scale section,
Theorem~\ref{thm:uniqueness} gives a single limiting datum. In
particular the normalized terminal sequence converges in every fixed
\(H^k\) without extracting a subsequence.
Corollary~\ref{rate:corollary} makes this convergence quantitative:
terminal data prescribed at scale \(N\) give an initial \(H^1\)
error bounded by \(CN^P e^{-N}\). At every fixed Sobolev order,
the corresponding solutions converge at rate \(e^{-\eta N}\),
for any \(\eta<1\), on each fixed compact time interval.

The proof of uniqueness requires more than repeating the estimate for
one orbit. At a common scale, the two solutions have slightly
different physical clocks. Their difference initially appears with
an exponentially large coefficient. We first subtract a quasistatic
correction obtained by inverting the constrained generator. This
improves the error from \(O(B_0/D)\) to a polynomial times \(B_0^2\)
at every fixed order. The exact energy constraint and differentiated
modulation equations then make the clock difference small enough.
A positive difference energy has only a polynomial propagation loss,
which is dominated by its vanishing terminal value.

Finally, a localized version of the sharp inequality implies global
continuation for any smooth datum of mass at most \(m\) and
nonpositive energy. This extends the constructed orbit in both time
directions. At the same mass and energy we give a different smooth,
nonstationary, square-symmetric solution which stays bounded in
\(H^1\). The concentrating behaviour therefore uses the terminal
construction as well as the conserved quantities.

Section~\ref{sec:profile} develops the mass-constrained profile and
computes its interaction. Section~\ref{sec:construction} proves the
modulation estimates, constructs the terminal family and obtains the
positive-time asymptotics. Section~\ref{sec:global} proves global
continuation and the bounded comparison. The uniqueness argument is
given in Section~\ref{sec:uniqueness}.

\section{A mass-constrained periodic profile}\label{sec:profile}

The interaction responsible for the construction is already visible in a
periodic sum of copies of the Euclidean ground state.  Its mass overlap is
larger than its nonlinear overlap by a factor of the separation.  We first
compute both quantities, and then impose the mass constraint before choosing
the quadratic phase.

Throughout the proof, integrals on a torus use Lebesgue measure, without
normalization.  Complex function spaces are regarded as real spaces, with
\[
 \langle f,g\rangle=\operatorname{Re}\int f\overline g.
\]
The period of the physical torus is denoted by \(L\), whereas \(D=L/\lambda\)
is the period after rescaling.  All estimates below hold for \(D\) sufficiently
large.  A constant with an index \(k\) may depend on a fixed number of
derivatives, but never on \(D\).  We use the abbreviation
\[
 \mathcal F(D)=C D^p e^{-D}
\]
only in estimates where the value of the finite exponent \(p\) is
irrelevant; it may be increased in a fixed finite collection of estimates.

When the domain is clear, \(\|f\|_p\) denotes its \(L^p\) norm.
Integrals of unperiodized profiles are over \(\mathbb R^2\);
integrals of periodic profiles are over one cell of the indicated torus.
Norms without a displayed domain follow the same convention.
The notation \(M(f)\), without a domain subscript, likewise refers
to the mass on the domain of \(f\).
For nonnegative quantities, \(A\lesssim B\) means \(A\le CB\),
and \(A\asymp B\) means that both comparisons hold, with constants
independent of \(D\). A subscript records any additional dependence.

\subsection{The ground state and the interaction integrals}

Let \(Q\) be the positive radial solution of
\[
 -\Delta Q+Q-Q^3=0\qquad\text{on }\mathbb R^2,
\]
and set
\begin{equation}\label{prof:constants}
 \begin{gathered}
 m=\int Q^2=\int|\nabla Q|^2=\frac12\int Q^4,\qquad
 I_2=\int |y|^2Q^2,\qquad c_0=\frac{I_2}{8},\\
 I_Q=\int Q^3(y)e^{y_1}\,dy,\qquad
 Q(r)=c_Qr^{-1/2}e^{-r}(1+O(r^{-1})),\\
 q=c_QI_Q,\qquad \alpha=\frac{q}{c_0},\qquad
 R(D)=D^{-1/2}e^{-D},\qquad
 B_0(D)=\sqrt{2\alpha}\,D^{-1/4}e^{-D/2}.
 \end{gathered}
\end{equation}
The integral identities follow by multiplying the equation by \(Q\) and
by \(y\cdot\nabla Q\).  We shall also use
\[
 L_+=-\Delta+1-3Q^2,\qquad L_-=-\Delta+1-Q^2,\qquad
 \Lambda f=f+y\cdot\nabla f.
\]
There is a unique radial decaying solution \(\rho\) of
\begin{equation}\label{prof:rho}
 L_+\rho=\frac{|y|^2Q}{4}.
\end{equation}
The standard ground-state spectral theory gives this assertion, the
generalized-kernel identities
\begin{equation}\label{prof:kernel}
 L_-Q=0,\qquad L_+\Lambda Q=-2Q,\qquad
 L_-(|y|^2Q)=-4\Lambda Q,
\end{equation}
and the fact that every derivative of \(\rho\) is bounded by a polynomial
times \(e^{-|y|}\); see \cite[(1.20)--(1.23)]{MartelRaphael18}.
In particular, integration by parts in \eqref{prof:rho} gives
\begin{equation}\label{prof:rho-mass}
 -2\langle\rho,Q\rangle
 =\frac14\langle |y|^2Q,\Lambda Q\rangle=-\frac{I_2}{4},
 \qquad \langle\rho,Q\rangle=c_0.
\end{equation}

The constants in \eqref{prof:constants} enter through the four nearest
periodic images.  The following computation also records the derivative
bounds needed later.

\begin{lemma}[Periodic overlaps]\label{prof:overlap}
Define
\[
 \begin{gathered}
 \Phi_D(y)=\sum_{k\in\mathbb Z^2}Q(y-Dk),\\
 A_D=\sum_{k\ne0}\int Q(y)Q(y-Dk)\,dy,\qquad
 C_D=\sum_{k\ne0}\int Q^3(y)Q(y-Dk)\,dy.
 \end{gathered}
\]
For \(j=0,1,2\),
\begin{equation}\label{prof:overlap-asymptotic}
 \begin{split}
 \partial_D^j A_D
 &=(-1)^j2qD^{1/2}e^{-D}(1+O(D^{-1})),\\
 \partial_D^j C_D
 &=(-1)^j4qD^{-1/2}e^{-D}(1+O(D^{-1})).
 \end{split}
\end{equation}
Moreover,
\begin{equation}\label{prof:unfolding}
 \int_{\mathbb T_D^2}\Phi_D^2=m+A_D,\quad
 \int_{\mathbb T_D^2}|\nabla\Phi_D|^2=m+C_D-A_D,\quad
 \int_{\mathbb T_D^2}\Phi_D^4=2m+4C_D+\mathcal R_D,
\end{equation}
where \(\mathcal R_D\ge0\) and, for every \(\eta>0\),
\[
 |\partial_D^j\mathcal R_D|\le C_{\eta,j}e^{-(2-\eta)D},
 \qquad j=0,1,2.
\]
\end{lemma}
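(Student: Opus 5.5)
The plan has three parts: unfold the periodic integrals, reduce every neighbour interaction to a convolution-type integral, and evaluate the four nearest-neighbour integrals from the tail of \(Q\).

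\textbf{Unfolding.} Since \(\Phi_D\) is \(D\)-periodic and \(\sum_k Q(\cdot-Dk)\) converges absolutely with all derivatives, we can pass from one cell to \(\mathbb R^2\):
\[
\int_{\mathbb T_D^2}\Phi_D^2=\int_{\mathbb R^2}Q\,\Phi_D=m+A_D .
\]
For the gradient we use \(-\Delta Q=Q^3-Q\), which gives
\[
\int_{\mathbb T_D^2}|\nabla\Phi_D|^2=\int_{\mathbb R^2}(-\Delta Q)\Phi_D=\int Q^3\Phi_D-\int Q\Phi_D=(2m+C_D)-(m+A_D)=m+C_D-A_D,
\]
using \(\int Q^4=2m\). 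For the quartic term we write \(\Phi_D=Q+S\) on the cell \(|y_i|\le D/2\), where \(S=\sum_{k\ne0}Q(\cdot-Dk)\). Expanding,
\[
\int_{\mathbb T_D^2}\Phi_D^4=\int_{\mathbb R^2}Q^3\Phi_D\cdot(\text{correction}).
\]
It is cleaner to proceed directly:
\[
\int_{\mathbb T_D^2}\Phi_D^4=\int_{\mathbb R^2}Q\,\Phi_D^3=\int_{\mathbb R^2}Q\bigl(Q^3+3Q^2S+3QS^2+S^3\bigr).
\]
The first two terms give \(2m\) and \(3C_D\). The task is then to show that \(\int Q(3QS^2+S^3)\) equals \(C_D\) plus an \(O(e^{-(2-\eta)D})\) nonnegative remainder.

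A symmetric route is more transparent. Unfold by cells:
\[
\int_{\mathbb T_D^2}\Phi_D^4=\int_{\text{cell}}(Q+S)^4=\int_{\text{cell}}Q^4+4Q^3S+R_1 .
\]
Here \(R_1=6Q^2S^2+4QS^3+S^4\ge0\). The term \(4\int_{\text{cell}}Q^3S\) differs from \(4C_D\) by the tail of \(Q^3S\) outside the cell, and \(\int_{\text{cell}}Q^4=2m-\int_{\text{out}}Q^4\). I will therefore define \(\mathcal R_D\) as \(\int_{\mathbb T_D^2}\Phi_D^4-2m-4C_D\). Nonnegativity follows from pointwise inequalities: since \(Q,S>0\), expanding \((Q+S)^4\) over the whole torus and comparing with the unfolded \(\int_{\mathbb R^2}(Q^4+4Q^3S)\) leaves only positive cross terms. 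Concretely, \(\int_{\mathbb T}\Phi^4=\int_{\mathbb R^2}Q\Phi^3\ge\int_{\mathbb R^2}Q(Q^3+3Q^2S)\) is not yet enough, so I use the identity
\[
\int_{\mathbb T}\Phi^4-\int_{\mathbb R^2}(Q^4+4Q^3S)=\int_{\mathbb R^2}Q\bigl(3QS^2+S^3\bigr)-\int_{\mathbb R^2}Q^3S .
\]
This must be compared via unfolding \(\int_{\mathbb R^2}Q^3S=\int_{\mathbb T}\sum_{k}Q_k^3\sum_{j\ne k}Q_j\). Writing everything as torus sums of \(Q_j=Q(\cdot-Dj)\), the quartic multinomial expansion of \((\sum Q_j)^4\) contains \(\sum_j Q_j^4+4\sum_{j\ne k}Q_j^3Q_k\) plus terms with at least two \(Q_j^2Q_k^2\) or mixed triple indices, all nonnegative. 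Thus \(\mathcal R_D\ge0\), and each remaining term has at most two factors localized near each centre, hence is \(\lesssim e^{-(2-\eta)D}\). Its \(D\)-derivatives are handled by differentiating under the sum, since \(\partial_D Q(y-Dk)=-k\cdot\nabla Q(y-Dk)\) has the same decay.

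\textbf{Asymptotics.} For \(A_D\) and \(C_D\), the terms with \(|k|\ge\sqrt2\) are \(O(e^{-\sqrt2 D}\mathrm{poly})\), so only \(k=\pm e_1,\pm e_2\) matter; by radiality these give \(4\int Q^pQ(y-De_1)\) for \(p=1,3\). For \(p=3\), \(Q^3\) decays like \(e^{-3|y|}\), so on the support of \(Q^3\) we may insert the tail asymptotic. We have \(|y-De_1|=D-y_1+O(|y|^2/D)\), and hence
\[
Q(y-De_1)=c_QD^{-1/2}e^{-D}e^{y_1}\bigl(1+O((1+|y|^2)/D)\bigr),
\]
valid for \(|y|\le D/2\). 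Dominated convergence with the \(e^{-3|y|}e^{|y|}\) weight gives \(C_D=4c_QI_QD^{-1/2}e^{-D}(1+O(D^{-1}))\). For \(p=1\) this fails, because both factors decay at rate \(1\); this is the main obstacle. There I will use the Fourier/convolution form, or integrate directly in coordinates \(y_1\in[0,D]\), \(y_2\). The product \(Q(y)Q(y-De_1)\approx c_Q^2(|y|\,|y-De_1|)^{-1/2}e^{-(|y|+|y-De_1|)}\). Since \(|y|+|y-De_1|\approx D+y_2^2(\tfrac1{2y_1}+\tfrac1{2(D-y_1)})\), a Gaussian integral in \(y_2\) followed by integration in \(y_1\) yields \(\asymp D^{1/2}e^{-D}\). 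Identifying the constant as \(q/2\) per neighbour is cleanest via the equation: \((1-\Delta)Q=Q^3\) gives
\[
\int QQ(\cdot-De_1)=\int Q^3\,(1-\Delta)^{-1}Q(\cdot-De_1).
\]
We then need the tail of \(G*Q=(1-\Delta)^{-1}Q\), which is \(\tfrac{c_Q}{2}r^{1/2}e^{-r}(1+O(r^{-1}))\) from the resonant ODE for radial solutions of \((1-\Delta)w=Q\). Inserting this tail as in the \(p=3\) case produces \(\tfrac{c_Q}{2}D^{1/2}e^{-D}I_Q\) per neighbour, hence \(A_D=2qD^{1/2}e^{-D}(1+O(D^{-1}))\).

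\textbf{Derivatives.} For \(\partial_D^j\), I differentiate the shifted factor; this gives \((-1)^j\) times the same integrals with \(\partial_{1}^jQ\) or \(\partial_1^j w\). The tails of these derivatives have the same leading form, since \(\partial_r\) of \(r^{\pm1/2}e^{-r}\) equals \(-\)itself times \((1+O(r^{-1}))\), so the same computation applies.
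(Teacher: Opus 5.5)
Your unfolding identities, the positivity of $\mathcal R_D$ via the multinomial expansion (at most two factors per centre, so the factors pair into two pairs of distinct centres), and the $C_D$ computation from the given tail of $Q$ are sound. Your resolvent trick is also the paper's own: with $w=(1-\Delta)^{-1}Q=(1-\Delta)^{-2}Q^3=H*Q^3$ one has $\int Q^3\,w(\cdot-re_1)=(H*P_1)(re_1)$, where $P_1=Q^3*Q^3$.

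\textbf{The tail of $w$.} The claim $w(r)=\tfrac{c_Q}{2}r^{1/2}e^{-r}(1+O(r^{-1}))$ does not follow from the radial ODE together with the given expansion $Q(r)=c_Qr^{-1/2}e^{-r}(1+O(r^{-1}))$. Variation of parameters gives $w(r)=K_0(r)\int_0^rI_0(s)Q(s)s\,ds+I_0(r)\int_r^\infty K_0(s)Q(s)s\,ds$. The first integrand is $\tfrac{c_Q}{\sqrt{2\pi}}\bigl(1+\tfrac1{8s}+e(s)+O(s^{-2})\bigr)$, where $e(s)=O(s^{-1})$ is the unspecified relative error in the tail of $Q$. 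This integrates only to $r+O(\log r)$. You therefore obtain a relative error $O(r^{-1}\log r)$ in $w$, hence in $A_D$, which is weaker than the lemma. The logarithm is real: a radial forcing behaving like $s^{-3/2}e^{-s}$ produces a response of size $\tfrac12 r^{-1/2}e^{-r}\log r$. So it cancels only if the $s^{-1}$ correction in $Q$ is exactly $-\tfrac1{8s}$.

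You need additional structure, in one of two forms:
\begin{itemize}
\item $Q=\alpha K_0+O(e^{-(3-\eta)r})$, which forces $e(s)=-\tfrac1{8s}+O(s^{-2})$ and cancels the logarithm; or
\item as in the paper, the explicit kernel $H(r)=rK_1(r)/(4\pi)=\tfrac{r^{1/2}e^{-r}}{4\sqrt{2\pi}}(1+O(r^{-1}))$, inserted against the rapidly decaying $Q^3$ exactly as in your $C_D$ step. This must be combined with $c_Q=I_Q/\sqrt{8\pi}$, obtained the same way from $Q=G*Q^3$ with $G=K_0/(2\pi)$.
\end{itemize}

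\textbf{The derivative statements for $j=1,2$.} These have the same defect. Your remark that $\partial_r(r^{\pm1/2}e^{-r})=-r^{\pm1/2}e^{-r}(1+O(r^{-1}))$ concerns only the leading term. It gives no control of $\partial_r$ of the $O(r^{-1})$ remainders, so the differentiated tails of $Q$ and $w$, with $O(r^{-1})$ errors, are not established. The paper obtains them from heat-kernel representations of $G$ and $H$, whose remainders satisfy $|\partial_r^kE(r)|\le C_kr^{-1-k}$ for $k\le2$. It then moves derivatives through the convolution with $P_1$, splitting at $|y|\le\epsilon r$ and cutting off near the kernel singularity. Adopting the explicit kernels repairs both points at once.

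\textbf{Presentation of the quartic term.} Reduce that paragraph to its final multinomial argument; the display containing ``correction'' is not meaningful. Also make the lattice summation of the remainder explicit, for instance by reserving part of the exponential decay and using $\sup_y\sum_k e^{-c|y-Dk|}<\infty$.
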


\begin{proof}
Here \(f*g\) denotes convolution on \(\mathbb R^2\),
\((f*g)(x)=\int_{\mathbb R^2}f(x-y)g(y)\,dy\), and \(e_1=(1,0)\).
We use \(K_\nu\) for the modified Bessel function of the second kind
of order \(\nu\).
Put \(F=Q^3\) and \(P_1=F*F\).  The kernels of
\((-\Delta+1)^{-1}\) and \((-\Delta+1)^{-2}\) on \(\mathbb R^2\) are
\[
 G(r)=\frac{K_0(r)}{2\pi},\qquad
 H(r)=\frac{rK_1(r)}{4\pi}.
\]
For the second formula, differentiate the first resolvent kernel with
respect to the coefficient of its zeroth-order term.  Since \(Q=G*F\)
and \(F\) is even,
\[
 \int Q(y)Q(y-re_1)\,dy=(H*P_1)(re_1),\qquad
 \int Q^3(y)Q(y-re_1)\,dy=(G*P_1)(re_1).
\]
The required asymptotics follow from the heat-kernel formulas
\[
 G(r)=\frac1{4\pi}\int_0^\infty t^{-1}e^{-t-r^2/(4t)}\,dt,
 \qquad
 H(r)=\frac1{4\pi}\int_0^\infty e^{-t-r^2/(4t)}\,dt,
 \qquad r>0.
\]
Indeed, these are obtained by integrating the planar heat kernel
against \(e^{-t}\) and \(te^{-t}\), respectively.  Set
\(t=(r/2)e^v\), combine the contributions from \(v\) and \(-v\),
and then put \(z=2\sqrt r\sinh(v/2)\).  This gives
\[
 \begin{split}
 G(r)&=\frac{e^{-r}r^{-1/2}}{2\pi}
       \int_0^\infty e^{-z^2/2}a_0(z^2/r)\,dz,\\
 H(r)&=\frac{e^{-r}r^{1/2}}{4\pi}
       \int_0^\infty e^{-z^2/2}a_1(z^2/r)\,dz,
 \end{split}
\]
where
\[
 a_0(w)=(1+w/4)^{-1/2},\qquad
 a_1(w)=(1+w/2)(1+w/4)^{-1/2},\qquad w\ge0.
\]
Both amplitudes equal one at zero and have bounded first and second
derivatives on \([0,\infty)\).  Thus, after multiplication by
\(\sqrt{2/\pi}\), each Gaussian integral equals \(1+E_j(r)\), where
\[
 |\partial_r^k E_j(r)|\le C_k r^{-1-k},
 \qquad r\ge1,\quad j=0,1,\quad k=0,1,2.
\]
For \(k=0\), use \(|a_j(w)-1|\le Cw\); for \(k=1,2\),
differentiate under the integral and use the finite Gaussian moments.
Since \(\int_0^\infty e^{-z^2/2}\,dz=\sqrt{\pi/2}\), we obtain
\[
 G(r)=\frac{r^{-1/2}e^{-r}}{\sqrt{8\pi}}(1+O(r^{-1})),
 \qquad
 H(r)=\frac{r^{1/2}e^{-r}}{4\sqrt{2\pi}}(1+O(r^{-1})).
\]
These expansions can be differentiated twice.  Convolution with \(F\)
in the first formula shows that \(c_Q=I_Q/\sqrt{8\pi}\), while
\(\int P_1(y)e^{y_1}\,dy=I_Q^2\).  It follows that
\begin{equation}\label{prof:single-overlap}
 \begin{split}
 \partial_r^j\int Q(y)Q(y-re_1)\,dy
 &=(-1)^j\frac q2 r^{1/2}e^{-r}(1+O(r^{-1})),\\
 \partial_r^j\int Q^3(y)Q(y-re_1)\,dy
 &=(-1)^jq r^{-1/2}e^{-r}(1+O(r^{-1})),
 \qquad j\le2.
 \end{split}
\end{equation}
Here differentiation of the error is justified at the convolution level.
The number \(\epsilon>0\) used to split the integral is fixed
and sufficiently small.
Indeed, \(F,P_1\), and their fixed derivatives decay like
\(e^{-(3-\eta)|y|}\).  On \(|y|\le\epsilon r\), expansion of
\(|re_1-y|\) leaves an integrable error bounded by \(r^{-1}\) times a
polynomial in \(|y|\) and its exponential moment.  The complement is
exponentially smaller.  Near the kernel singularity one first inserts a
smooth cutoff and moves derivatives to \(P_1\).  Its derivatives there
are exponentially small, and the kernels themselves are locally
integrable.  This argument applies to each of the two derivatives in
\eqref{prof:single-overlap}.

Radiality reduces the sums to separations \(D|k|\).  The four terms
\(|k|=1\) give \eqref{prof:overlap-asymptotic}; all other terms have
\(|k|\ge\sqrt2\).  Their differentiated series converge absolutely and
are exponentially smaller.

The mass identity in \eqref{prof:unfolding} follows by unfolding one
periodic cell.  For a nonzero shift \(z\), the equation for \(Q\) gives
\[
 \int\nabla Q(y)\cdot\nabla Q(y-z)\,dy
 =\int Q^3(y)Q(y-z)\,dy-\int Q(y)Q(y-z)\,dy,
\]
which proves the gradient identity.  In the quartic expansion, terms
with all four centers equal contribute \(2m\); the terms with three
equal centers contribute \(4C_D\).  The remaining terms are positive.
Their four centers can be paired into two pairs of distinct centers,
so the sum of the distances from any integration point to those
centers is at least \(2D\).

After unfolding at one center, this lower bound gives the remainder
estimate.  More explicitly, each derivative in \(D\) inserts
derivatives of \(Q\) and polynomial weights in the lattice indices.
These weights can be absorbed into an arbitrarily small fraction of
the exponential decay.  A further small fraction is retained for
integration and summation, using
\[
 \sup_{D\ge1,\ y\in\mathbb R^2}
 \sum_{k\in\mathbb Z^2}e^{-c|y-Dk|}<\infty\qquad(c>0).
\]
The remaining exponential is bounded by \(e^{-(2-\eta)D}\).
This also supplies a common integrable majorant for the differentiated
series.
\end{proof}

We record one related estimate separately.  It distinguishes a product
of overlapping tails from an individual tail cut at the boundary of a
cell; the latter would give an unnecessarily large error.

\begin{lemma}[Localized product bounds]\label{prof:products}
For \(f_a=Q+a\rho\), set
\[
 Z_{D,b}f(y)=\sum_{k\in\mathbb Z^2}
 e^{-ib|y-Dk|^2/4}f(y-Dk),\qquad P_{D,b,a}=Z_{D,b}f_a.
\]
Suppose that \(|a|\le C D^{1/2}e^{-D}\) and
\(|b|\le C B_0(D)\).  For every fixed integer \(k\ge0\),
\begin{equation}\label{prof:cubic-products}
 \left\||P_{D,b,a}|^2P_{D,b,a}
 -\sum_{\ell\in\mathbb Z^2}
 e^{-ib|y-D\ell|^2/4}f_a(y-D\ell)^3
 \right\|_{H^k(\mathbb T_D^2)}
 \le C_k R(D).
\end{equation}
Fixed spatial and parameter derivatives of a product involving
distinct image centers are bounded by \(C D^p e^{-D}\), in the
corresponding fixed Sobolev norm or unfolded scalar integral.
\end{lemma}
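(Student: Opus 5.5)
We will expand the cubic expression and show that everything except the diagonal terms consists of products involving at least two distinct image centres; the estimate then reduces to the final sentence of the lemma, which we prove along the way. Write \(g_\ell(y)=e^{-ib|y-D\ell|^2/4}f_a(y-D\ell)\), so that \(P=\sum_\ell g_\ell\). Since the phases are unimodular, \(|g_\ell|=|f_a(y-D\ell)|\), and
\[
 |P|^2P=\sum_{\ell_1,\ell_2,\ell_3}g_{\ell_1}\overline{g_{\ell_2}}g_{\ell_3}.
\]
The diagonal terms \(\ell_1=\ell_2=\ell_3=\ell\) equal exactly \(e^{-ib|y-D\ell|^2/4}f_a(y-D\ell)^3\), because \(f_a\) is real. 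Hence the left side of \eqref{prof:cubic-products} is the sum over triples that are not all equal. Each such triple contains two distinct centres \(\ell\ne\ell'\).

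We first establish pointwise bounds on derivatives of \(g_\ell\). From the decay of \(Q\) and the derivative bounds on \(\rho\) (\cite{MartelRaphael18}, as quoted above), together with \(|a|\lesssim D^{1/2}e^{-D}\), we get
\[
 |\partial_y^\beta f_a(y)|\le C_\beta(1+|y|)^{N}e^{-|y|}.
\]
The asymptotics \(Q\sim c_Qr^{-1/2}e^{-r}\) and its derivatives in fact give the sharper weight \(\langle y\rangle^{-1/2}e^{-|y|}\) for the \(Q\) part. Differentiating the phase \(e^{-ib|y-D\ell|^2/4}\) produces factors \(b(y-D\ell)\) and \(b\). Since \(|b|\lesssim B_0\sim D^{-1/4}e^{-D/2}\), these factors are harmless wherever \(|y-D\ell|\lesssim D\), and they are polynomially weighted otherwise. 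Thus \(|\partial^\beta g_\ell(y)|\le C_\beta\langle y-D\ell\rangle^{N}e^{-|y-D\ell|}\), and each term carries an additional factor \(\langle y-D\ell\rangle^{-1/2}\) except the \(a\rho\) contributions, which carry an extra \(D^{1/2}e^{-D}\). Parameter derivatives in \(D\), \(b\) and \(a\) insert only polynomial weights in \(y-D\ell\) and \(\ell\).

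Next we estimate the cross terms on one cell \([-D/2,D/2]^2\). For a cross term, the product of the three envelopes is bounded by \(e^{-(|y-D\ell_1|+|y-D\ell_2|+|y-D\ell_3|)}\) times polynomial weights. Pick two distinct centres among the three. By the triangle inequality the sum of their two distances is at least \(D|\ell-\ell'|\ge D\), and the third envelope retains its own decay. We sacrifice a small fraction \(\eta\) of each exponential to absorb the polynomial weights and to sum over lattice indices, using the uniform bound \(\sup_y\sum_k e^{-c|y-Dk|}<\infty\) recorded in the proof of Lemma~\ref{prof:overlap}.

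This crude argument gives only \(e^{-(1-\eta)D}\), whereas we need \(R(D)=D^{-1/2}e^{-D}\) without a loss. We therefore treat the dominant configurations separately. These are triples whose centres lie in \(\{0,\pm e_1,\pm e_2\}\) relative to the cell, with two coinciding and one adjacent, for instance \(g_0^2\overline{g_{e_1}}\). On the cell the product is bounded by \(\langle y\rangle^{-1}e^{-2|y|}\cdot\langle y-De_1\rangle^{-1/2}e^{-|y-De_1|}\). Where \(|y|\le D/2\) we have \(2|y|+|y-De_1|\ge D+|y|\), so the product is at most \(CD^{-1/2}e^{-D}\langle y\rangle^{-1}e^{-|y|}\). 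This is square integrable, and the sharp \(D^{-1/2}\) comes from the tail of the neighbouring image at distance \(\sim D\). Derivatives fall on smooth factors with the same bounds, and the phase derivatives contribute \(b|y-De_1|\lesssim bD\ll1\).

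The remaining cross configurations are of two kinds. Triples with three distinct centres, or with \(|\ell-\ell'|\ge\sqrt2\), satisfy an exponent lower bound of \(\sqrt2 D\) or \((3/2)D\). After losing \(\eta\), these are still \(\lesssim e^{-(1+c)D}\le R(D)\). Terms containing \(a\rho\) gain the additional factor \(D^{1/2}e^{-D}\). This establishes both \eqref{prof:cubic-products} and the general statement about products with distinct centres; the latter needs only the weaker bound \(CD^pe^{-D}\), which the first argument supplies.

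The main obstacle is keeping the sharp power \(D^{-1/2}\) in the nearest-neighbour configuration rather than losing it to polynomial weights. For this reason we use the exact \(Q\) asymptotics there, not the crude polynomial-times-exponential bound used elsewhere.
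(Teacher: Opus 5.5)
Your proof of \eqref{prof:cubic-products} follows the paper's route. You expand $|P|^2P$, note that the diagonal triples give exactly the subtracted sum, and isolate two factors at one centre with the third at an adjacent centre. You then use $2|y|+|y-De_1|\ge D+|y|$, which is the paper's bound $Cr^{-1/2}e^{-r+|y|}$ for the distant factor, so that the $r^{-1/2}$ tail of the neighbouring image gives $R(D)$. Configurations with larger total separation can afford an $\eta$-loss.

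There are two small bookkeeping slips. First, triples $(\ell,\ell,\ell')$ with $\ell\neq0$ and $|\ell-\ell'|=1$ appear in neither of your lists. They are harmless: on the cell, $|y-D\ell|\ge D/2$ for $\ell\ne0$, so their exponent is at least $3D/2$. Second, $|y-De_1|\ge D/2$ on the whole cell, so your dominant bound does not need the restriction $|y|\le D/2$.

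The genuine gap is in the final assertion. You call $CD^pe^{-D}$ the ``weaker'' bound and say your crude argument supplies it. But that argument gives only $e^{-(1-\eta)D}$, which is larger than $D^pe^{-D}$ by the factor $e^{\eta D}$. In the nearest-neighbour configuration, the polynomial weights produced by parameter derivatives cannot be absorbed into a fraction of the exponential.

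For cubic terms with a repeated centre, your sharp inequality does repair this. The spare $e^{-|y|}$ absorbs the local weights, and weights in $y-De_1$ are $O(D^p)$ on the cell. You do not say this, however.

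More importantly, the assertion is also applied to unfolded two-factor integrals. Examples are the chirped mass overlaps $\int f_a(y)f_a(y-Dk)e^{-ib(|y|^2-|y-Dk|^2)/4}\,dy$ and the gradient cross terms, together with their $D$- and $b$-derivatives. These give the derivative bounds on $a(D,b)$ in Lemma~\ref{prof:mass} and the cross-term bounds in Lemma~\ref{prof:energy-branch}. In these integrals no factor is repeated, and $e^{-|y|-|y-De_1|}=e^{-D}$ along the entire segment between the two centres. Neither of your arguments yields $D^pe^{-D}$ there.

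The missing step is the paper's: keep the weights as polynomials and integrate near that segment. Concretely, $\{|y|+|y-De_1|\le D+s\}$ is an ellipse of area at most $C(D+s)^{3/2}s^{1/2}$, and on it $\langle y\rangle^N\langle y-De_1\rangle^N\le C(D+s)^{2N}$. Hence $\int\langle y\rangle^N\langle y-De_1\rangle^Ne^{-|y|-|y-De_1|}\,dy\le CD^pe^{-D}$. Only separations of at least $\sqrt2D$, or three distinct centres, may use the $\eta$-loss.
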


\begin{proof}
Every mixed cubic term has at least two distinct centers.  For its
largest contribution, two factors are near one center and the third
is near a center at distance \(r\ge D\).  The estimate reduces to
\[
 \|Q(\cdot)^2Q(\cdot-re)\|_{H^k}\le C_k r^{-1/2}e^{-r},
 \qquad |e|=1.
\]
On \(|y|\le r/2\), the distant factor is bounded by
\(C r^{-1/2}e^{-r+|y|}\), and the square of the other two
factors times \(e^{2|y|}\) is integrable.  Near the other center,
or away from both centers, the repeated factors yield an additional
exponential gain.  The same proof applies to derivatives, since
fixed derivatives of \(Q\) have the same exponential tail.
Terms with three distinct centers have a larger total separation.
Summing the resulting bounds over the lattice gives
\eqref{prof:cubic-products}.  Replacing a factor by \(a\rho\)
introduces only a polynomial tail and the exponentially small factor
\(a\), so its contribution is smaller.

A derivative of a phase or of a center inserts a polynomial in the
local coordinates and the separation.  Integrating along the segment
between two centers produces at most another polynomial factor.
Thus fixed parameter derivatives satisfy the weaker, but sufficient,
bound \(C D^p e^{-D}\).  Absolute convergence of the lattice series
justifies taking these derivatives before integration.  Finally,
\(bD^2\to0\), and fixed spatial derivatives of the chirps preserve
all these estimates.
\end{proof}

\subsection{Exact mass and the zero-energy phase}

The definition of \(Z_{D,b}\) assigns a quadratic phase to each image
before summing.  It therefore gives a periodic function for every
real \(b\).  We now choose \(a\) so that its mass is exactly \(m\).

\begin{lemma}[Mass correction]\label{prof:mass}
There is a fixed \(\delta>0\) such that, for \(D\) large and
\(|b|\le\delta\), there is a unique small real smooth function
\(a=a(D,b)\), even in \(b\), such that
\begin{equation}\label{prof:exact-mass}
 \int_{\mathbb T_D^2}|P_{D,b,a(D,b)}|^2=m.
\end{equation}
On this fixed parameter neighborhood,
\(|a(D,b)|\le C D^{1/2}e^{-D}\), and every fixed parameter
derivative is bounded by a polynomial in \(D\) times \(e^{-D}\).
In the narrower region \(|b|\le C_*B_0(D)\), for any fixed
\(C_*>0\), it satisfies
\begin{equation}\label{prof:a-asymptotic}
 a(D,b)=-\frac{A_D}{2c_0}+O(D^p e^{-2D})
 =-\alpha D^{1/2}e^{-D}(1+O(D^{-1})).
\end{equation}
For any fixed finite collection of derivatives,
\[
 |\partial_D^j a|\le C_jD^{p_j}e^{-D},\qquad
 |\partial_ba|\le C|b|D^p e^{-D},\qquad
 |\partial_D^j\partial_b^\ell a|\le C_{j,\ell}D^{p_{j,\ell}}e^{-D}
 \quad(\ell\ge2).
\]
The first two \(D\)-derivatives of the first equality in
\eqref{prof:a-asymptotic} have the corresponding differentiated error
bounds.
\end{lemma}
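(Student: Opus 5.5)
We apply the implicit function theorem to
\[
 \mathcal M(D,b,a)=\int_{\mathbb T_D^2}|P_{D,b,a}|^2-m .
\]
The first step is to unfold the periodic sum over one cell. Since \(|Z_{D,b}f|^2\) is a sum of diagonal terms and cross terms, we obtain
\[
 \mathcal M(D,b,a)=\int_{\mathbb R^2}f_a^2-m+\mathcal X(D,b,a),
\]
where \(\mathcal X\) collects the products of images with distinct centers. The diagonal term does not depend on \(b\), because the chirp has modulus one. Expanding \(f_a=Q+a\rho\) gives
\[
 \int f_a^2-m=2ac_0+a^2\int\rho^2,
\]
using \(\langle\rho,Q\rangle=c_0\) from \eqref{prof:rho-mass}.

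The cross term is
\[
 \mathcal X=\sum_{k\ne0}\operatorname{Re}\int e^{-ib(|y|^2-|y-Dk|^2)/4}f_a(y)f_a(y-Dk)\,dy .
\]
It is even in \(b\), because the substitution \(b\mapsto-b\) conjugates the integrand. At \(b=0\), \(a=0\) it equals \(A_D\). By the last assertion of Lemma~\ref{prof:products}, every fixed derivative of \(\mathcal X\) in \((D,b,a)\) is \(O(D^pe^{-D})\). This bound holds uniformly for \(|b|\le\delta\) and \(|a|\le1\), provided \(\delta\) is small enough that the chirp factor \(e^{|b|\,|y|\,D|k|/2}\) is absorbed by the exponential decay \(e^{-|y|-|y-Dk|}\). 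I expect this uniformity in \(b\) on a \emph{fixed} neighbourhood to be the main technical point. The phase difference is linear in \(y\) with slope \(bD|k|/2\), so I would write it as \(e^{-ib(D^2|k|^2-2Dk\cdot y)/4}\) and estimate the integrand along the segment joining the two centres. The product \(Q(y)Q(y-Dk)\) is concentrated near this segment with total decay \(e^{-D|k|}\). The phase contributes only bounded oscillation, and derivatives in \(b\) produce polynomial weights \(D^p\).

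Next we solve \(\mathcal M=0\). We have \(\partial_a\mathcal M=2c_0+O(|a|)+O(D^pe^{-D})\), which is bounded away from zero, while \(|\mathcal M(D,b,0)|\le CD^pe^{-D}\); this \(D^p\) is improved to \(D^{1/2}\) below. A contraction (or quantitative implicit function theorem) on \(\{|a|\le CD^pe^{-D}\}\) then gives a unique small smooth root \(a(D,b)\). The root is even in \(b\), by uniqueness and the evenness of \(\mathcal M\). The derivative bounds follow by differentiating the identity \(\mathcal M(D,b,a(D,b))=0\) and using Fa\`a di Bruno: each derivative of \(a\) is \(-(\partial_a\mathcal M)^{-1}\) times a combination of derivatives of \(\mathcal M\), all of which are \(O(D^pe^{-D})\). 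For \(\partial_ba\), evenness gives \(\partial_b\mathcal M(D,0,a)=0\), so \(\partial_b\mathcal M=O(|b|D^pe^{-D})\) by the mean value theorem applied to \(\partial_b^2\mathcal X\). This yields \(|\partial_ba|\le C|b|D^pe^{-D}\).

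Finally, for \eqref{prof:a-asymptotic} in the region \(|b|\le C_*B_0\), we expand
\[
 \mathcal X(D,b,a)=A_D+O(b\,D^pe^{-D})+O(|a|D^pe^{-D}).
\]
The \(b\)-error is in fact \(O(b^2D^pe^{-D})=O(D^pe^{-2D})\) by evenness, and the \(a\)-error is \(O(D^pe^{-2D})\) since \(|a|\lesssim D^pe^{-D}\). Hence
\[
 2c_0a=-A_D+O(a^2)+O(D^pe^{-2D}),
\]
that is, \(a=-A_D/(2c_0)+O(D^pe^{-2D})\). Lemma~\ref{prof:overlap} gives \(A_D=2qD^{1/2}e^{-D}(1+O(D^{-1}))\), and with \(\alpha=q/c_0\) this produces the second equality. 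It also supplies the a posteriori bound \(|a|\le CD^{1/2}e^{-D}\); on the full region \(|b|\le\delta\) one first gets \(|a|\lesssim D^pe^{-D}\) and then refines the exponent in the same way. The \(D\)-derivatives of the first equality are obtained by differentiating the same identity up to twice. In doing so we use the differentiated asymptotics \eqref{prof:overlap-asymptotic} and the fact that each differentiated error remains a product of two quantities of size \(D^pe^{-D}\).
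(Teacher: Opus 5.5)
Your route is the paper's: unfold the cell integral, isolate the diagonal term \(m+2c_0a+a^2\|\rho\|_2^2\), and treat the cross sum \(\mathcal X\) as an even function of \(b\) whose fixed derivatives carry only polynomial weights. You then solve with the implicit function theorem, obtain the factor \(|b|\) in \(\partial_ba\) from evenness, and read off \eqref{prof:a-asymptotic} from \(\mathcal X=A_D+O(b^2D^pe^{-D}+|a|D^pe^{-D})\) in the narrow region.

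One step does not work as written: the bound \(|a|\le CD^{1/2}e^{-D}\) on the whole fixed region \(|b|\le\delta\). You propose to refine \(|a|\lesssim D^pe^{-D}\) there ``in the same way'', that is, by Taylor expanding \(\mathcal X\) about \(b=0\). When \(|b|\) is comparable to the fixed \(\delta\), this cannot give the power \(D^{1/2}\). Each \(b\)-derivative of the phase brings down \((2Dk\cdot y-D^2|k|^2)/4\). This weight has size \(D^2|k|^2/4\) at the two ends of the segment carrying \(Q(y)Q(y-Dk)\). So the Taylor remainder is only bounded by something of order \(\delta^2D^4\cdot D^{1/2}e^{-D}\).

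The observation you need is the one the paper starts from. For real \(b\) the chirp is unimodular and \(Q>0\), so the off-diagonal terms are bounded in absolute value by the unchirped overlaps. Hence \(|\mathcal X(D,b,0)|\le A_D\le CD^{1/2}e^{-D}\) for every real \(b\). Combined with \(\partial_a\mathcal X=O(D^pe^{-D})\), the mass equation then gives \(|a|\le CD^{1/2}e^{-D}\) directly, with no refinement step.

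The same unimodularity removes what you call the main technical point. When \(b\) is real there is no factor \(e^{|b||y|D|k|/2}\) to absorb. The cross-term derivative bounds need only \(|b|\) bounded, not \(\delta\) small. If such a factor were really present, it could not be absorbed for fixed \(\delta\): along the segment \(|y|\) reaches \(D|k|\), giving \(e^{\delta D^2|k|^2/2}\), which overwhelms \(e^{-D|k|}\).

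Your closing remark (bounded oscillation, plus polynomial weights from differentiation) is the correct argument and is the paper's. With these two adjustments, the rest of your proposal goes through as you describe. This includes the implicit differentiation and the differentiated form of the first equality in \eqref{prof:a-asymptotic}.
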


\begin{proof}
Unfolding the mass gives the diagonal term
\(m+2c_0a+a^2\|\rho\|_2^2\).  Each off-diagonal term contains
the cosine of a phase difference, after pairing it with its complex
conjugate.  The absolute values of the off-diagonal terms are bounded
by the corresponding unchirped overlaps.  Thus for \(|b|\le\delta\)
their sum is \(O(D^{1/2}e^{-D}+|a|D^p e^{-D})\), and the
mass derivative in \(a\) remains bounded away from zero, uniformly on a
fixed sufficiently small \(a\)-interval.  The implicit function
theorem gives the stated extension and the bound
\(|a|\le C D^{1/2}e^{-D}\).  Fixed derivatives of these
off-diagonal terms insert only polynomial weights in the overlap
integrals, even when \(b\) lies in the fixed interval.

At \(a=b=0\), the off-diagonal sum is \(A_D\).  Taylor's formula,
the weighted-overlap proof of Lemma~\ref{prof:products}, and
\eqref{prof:rho-mass} therefore give
\[
 \int |P_{D,b,a}|^2
 =m+A_D+2c_0a+
 O\bigl(a^2+|a|D^p e^{-D}+b^2D^p e^{-D}\bigr).
\]
Conjugation gives evenness in \(b\).  In the narrower region
\(|b|\le C_*B_0(D)\), substituting the bound for \(a\) into this
expansion yields the first equality in \eqref{prof:a-asymptotic}.
The second follows from Lemma~\ref{prof:overlap}.

All derivatives claimed in the statement follow by differentiating
this exact mass equation.  Its \(a\)-derivative remains bounded away
from zero, whereas fixed derivatives of the off-diagonal sums are
bounded by the weighted estimates of Lemma~\ref{prof:products}.
In particular their first \(b\)-derivative vanishes at \(b=0\),
and their second \(b\)-derivative is \(O(D^p e^{-D})\).
This gives the extra factor \(|b|\) in \(\partial_ba\).
Repeated implicit differentiation proves the remaining bounds;
no differentiation of an unspecified asymptotic error is required.
\end{proof}

From now on, write \(P_{D,b}=P_{D,b,a(D,b)}\).  Let
\[
 E_D(v)=\frac12\int_{\mathbb T_D^2}|\nabla v|^2
       -\frac14\int_{\mathbb T_D^2}|v|^4,\qquad
 S_D(v)=E_D(v)+\frac12\int_{\mathbb T_D^2}|v|^2.
\]
We suppress the parameters when convenient, writing
\(P=P_{D,b}\) and \(a=a(D,b)\).
The prime on \(S_D\) denotes its variational derivative for the real
pairing:
\[
 S_D'(v)=-\Delta v+v-|v|^2v,\qquad
 \langle S_D'(v),h\rangle
   =\left.\frac{d}{d\tau}S_D(v+\tau h)\right|_{\tau=0},
 \quad \tau\in\mathbb R .
\]

The next lemma selects the terminal phase using the exact energy,
rather than only its leading asymptotic.

\begin{lemma}[The zero-energy profile branch]\label{prof:energy-branch}
For all sufficiently large \(D\), the equation
\[
 E_D(P_{D,b})=0
\]
has a unique solution \(b=B(D)\) in
\([B_0(D)/2,2B_0(D)]\).  It is smooth, positive, and
\begin{equation}\label{prof:B-asymptotic}
 B(D)=B_0(D)(1+O(D^{-1})),\qquad
 \frac{B'(D)}{B(D)}=-\frac12+O(D^{-1}),\qquad
 |\partial_D^jB(D)|\le C_jD^{p_j}B_0(D)
\end{equation}
for each fixed \(j\).  Uniformly for \(b\in[B_0(D)/4,4B_0(D)]\),
\begin{equation}\label{prof:energy-derivative}
 \partial_b E_D(P_{D,b})
 =2c_0b+O(|b|D^p e^{-D})\ge cB_0(D).
\end{equation}
In particular \(\lambda^{-1}P_{D,B(D)}(x/\lambda)\), with
\(\lambda=L/D\), has mass \(m\) and energy zero on the physical torus.
\end{lemma}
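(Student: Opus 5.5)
The approach is to write \(E_D(P_{D,b})\) as an explicit function of \(b\) and \(D\) whose leading part is \(c_0b^2-\tfrac{\alpha c_0}{2}R(D)\cdot(\text{const})\), and then to solve for \(b\) by the intermediate value theorem plus monotonicity. The first step is to unfold the energy of \(P=Z_{D,b}f_a\) with \(f_a=Q+a\rho\). For the kinetic term, differentiate the chirped images: \(\nabla(e^{-ib|y|^2/4}f)=e^{-ib|y|^2/4}(\nabla f-\tfrac{ib}{2}yf)\). The diagonal term is then
\[
\tfrac12\int|\nabla f_a|^2+\tfrac{b^2}{8}\int|y|^2f_a^2 .
\]
The off-diagonal terms are controlled by Lemma~\ref{prof:products} and Lemma~\ref{prof:overlap}, and at \(b=0,a=0\) they reproduce \(\tfrac12(C_D-A_D)\). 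The quartic term is handled through \eqref{prof:cubic-products}, or directly through \eqref{prof:unfolding}; its diagonal part is \(-\tfrac14\int f_a^4\) and its off-diagonal part is \(-C_D\) plus \(O(e^{-(2-\eta)D})\). The linear terms in \(a\) are
\[
a\langle -\Delta Q-Q^3,\rho\rangle=-a\langle Q,\rho\rangle=-c_0a ,
\]
using the equation for \(Q\) and \eqref{prof:rho-mass}. Collecting the pieces gives
\[
E_D(P_{D,b})=\tfrac12 m-\tfrac12 m-\tfrac12A_D-\tfrac12C_D-c_0a+\tfrac{b^2}{8}I_2+O(D^pe^{-2D}+b^2D^pe^{-D}) .
\]
The constant terms cancel by \eqref{prof:constants}. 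Substituting \eqref{prof:a-asymptotic} turns \(-c_0a\) into \(+\tfrac12A_D\), which cancels the first-order mass overlap as described in the introduction. What remains is
\[
E_D(P_{D,b})=c_0b^2-2qR(D)(1+O(D^{-1}))+O(b^2D^pe^{-D}) .
\]
Setting this to zero gives \(b^2\approx 2\alpha R(D)\), that is, \(b\approx B_0(D)\).

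The second step is the derivative bound \eqref{prof:energy-derivative}. I would differentiate the exact unfolded expression in \(b\) rather than the asymptotic one. The diagonal chirp gives \(\tfrac{b}{4}\int|y|^2f_a^2=2c_0b+O(|b|D^pe^{-D})\). The \(a\)-dependence contributes \(\partial_ba\cdot(\partial_aE)\), and \(\partial_aE\) is \(O(1)\); by Lemma~\ref{prof:mass}, \(\partial_ba=O(|b|D^pe^{-D})\). The off-diagonal terms are even in \(b\) and have second \(b\)-derivative \(O(D^pe^{-D})\), so their first derivative is \(O(|b|D^pe^{-D})\). Together this gives \(2c_0b+O(|b|D^pe^{-D})\), which is at least \(cB_0\) on \([B_0/4,4B_0]\). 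Strict monotonicity then yields uniqueness in \([B_0/2,2B_0]\). Existence follows because the expansion makes \(E_D\) negative at \(b=B_0/2\) (value about \(-\tfrac34\cdot 2qR\)) and positive at \(2B_0\). Smoothness comes from the implicit function theorem, since \(\partial_bE\neq0\). Positivity is immediate.

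The third step, which I expect to be the main obstacle, is the derivative asymptotics. For \(B'/B\) I would use
\[
B'=-\partial_DE/\partial_bE .
\]
Here I need \(\partial_DE=-c_0b^2\cdot(1+O(D^{-1}))\cdot\)(\(D\)-derivative of \(\log R\) factor), i.e. \(\partial_D[E]\) evaluated at \(b=B\) equals \(2qR(D)(-1+O(D^{-1}))\)\(\cdot(-1)\). This requires differentiated versions of every error term with the same relative accuracy, which the lemmas supply: Lemma~\ref{prof:overlap} for \(j\le2\), the stated differentiated errors in Lemma~\ref{prof:mass}, and polynomial-weighted bounds for the \(b^2\)-dependent off-diagonal terms. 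Dividing \(\partial_DE\approx 2qR\) by \(\partial_bE\approx 2c_0B\) gives
\[
B'\approx -\frac{qR}{c_0B}=-\frac{\alpha R}{B}=-\frac B2 ,
\]
with relative error \(O(D^{-1})\). The higher bounds \(|\partial_D^jB|\le C_jD^{p_j}B_0\) follow from repeated implicit differentiation. Each parameter derivative of \(E\) is bounded polynomially times \(e^{-D}\) or \(b\) by Lemmas~\ref{prof:products} and \ref{prof:mass}, and each division by \(\partial_bE\gtrsim B_0\) costs only a factor \(e^{D/2}D^{1/4}\), which is balanced by numerators of size \(B_0^2D^p\). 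Finally, the physical statement is scaling: \(M\) and \(E\) of \(\lambda^{-1}v(x/\lambda)\) on \(\mathbb T_L^2\) equal \(M_D(v)\) and \(\lambda^{-2}E_D(v)\), so both vanish as required.
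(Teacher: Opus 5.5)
Your proposal is correct and follows essentially the paper's argument. It has the same four parts: the expansion $E_D(P_{D,b})=c_0b^2-\frac12C_D+O(e^{-3D/2}+b^2D^pe^{-D})$, the derivative bound obtained from evenness in $b$ together with $\partial_ba=O(|b|D^pe^{-D})$, the intermediate value theorem with monotonicity, and implicit differentiation for $B'$ and the higher derivatives. The only difference is organizational: at $b=0$ the paper cancels $A_D$ by comparing with the scalar-normalized sum $\varphi_D=(m/(m+A_D))^{1/2}\Phi_D$, using an exact formula and an action expansion, whereas you Taylor-expand in $a$ and substitute $a=-A_D/(2c_0)+O(D^pe^{-2D})$ — both rely on the same first-order cancellation forced by the mass constraint.
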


\begin{proof}
First consider the real profile.  The normalized periodic sum
\(\varphi_D=(m/(m+A_D))^{1/2}\Phi_D\) has mass \(m\).
The identities \eqref{prof:unfolding} give the exact formula
\[
 E_D(\varphi_D)
 =-\frac{2mA_D^2+2m(m-A_D)C_D+m^2\mathcal R_D}
         {4(m+A_D)^2}
 =-\frac12C_D+O(e^{-3D/2}).
\]
The larger term \(A_D\) cancels at first order.  The error and its
first two \(D\)-derivatives satisfy the displayed bound, by
Lemma~\ref{prof:overlap}.

The difference \(h_D=P_{D,0}-\varphi_D\) is
\(O(D^p e^{-D})\) in every fixed Sobolev norm.  Since both functions
have mass \(m\), expansion of their actions yields
\[
 E_D(P_{D,0})-E_D(\varphi_D)
 =\langle S_D'(\varphi_D),h_D\rangle+O(\|h_D\|_{H^1}^2+
 \|h_D\|_{H^1}^4).
\]
The mixed-product estimates and the mass normalization give
\(\|S_D'(\varphi_D)\|_2\le C D^p e^{-D}\).
Consequently this difference, and its fixed derivatives when
required, is \(O(D^p e^{-2D})\).

The energy is even in \(b\).  Its diagonal phase contribution is
\[
 \frac{b^2}{8}\int_{\mathbb R^2}|y|^2(Q+a\rho)^2
 =c_0b^2+O(|a|b^2).
\]
Every other \(b\)-dependent term has second derivative bounded by
\(C D^p e^{-D}\), including the dependence through \(a(D,b)\).
This follows directly by differentiating the unfolded cross terms
and the exact mass equation.  We have proved
\begin{equation}\label{prof:energy-expansion}
 E_D(P_{D,b})
 =c_0b^2-\frac12C_D+O(e^{-3D/2}+b^2D^p e^{-D}),
\end{equation}
as well as \eqref{prof:energy-derivative}.  The same expansion holds
after the \(D\)-derivatives used below.

At \(b=B_0/2\) the leading energy is negative, whereas at \(b=2B_0\)
it is positive.  The intermediate value theorem and
\eqref{prof:energy-derivative} give the unique root.
The identity \(c_0B_0^2=2qR\), together with
\eqref{prof:overlap-asymptotic}, proves the first assertion in
\eqref{prof:B-asymptotic}.  Implicit differentiation of the exact
energy equation gives
\[
 2c_0BB'=\frac12 C_D'+O(D^p e^{-3D/2}),
\]
and hence \(B'/B=-1/2+O(D^{-1})\).  Higher derivatives are obtained
from the same equation and the differentiated image sums.
They are bounded by a polynomial times \(B_0\).
Finally, critical scaling preserves mass and multiplies energy by
\(\lambda^{-2}\), so the physical assertions follow.
\end{proof}

For an unknown solution, the energy of the error must be measured
relative to the mass-constrained profile.  We record the identity
which will later keep its phase on the branch of
Lemma~\ref{prof:energy-branch}.

\begin{lemma}[Energy comparison on the constraint surface]
\label{prof:energy-comparison}
Set
\[
 T_1=Z_{D,b}(|y|^2Q),\qquad
 T_2=iZ_{D,b}\rho,\qquad T_3=iZ_{D,b}\Lambda Q,
\]
and
\[
 \mathcal H_{P}(\varepsilon)
 =S_D(P+\varepsilon)-S_D(P)-\langle S_D'(P),\varepsilon\rangle.
\]
For \(b\in[B_0/4,4B_0]\), define
\begin{equation}\label{prof:stationary-residual}
 r_{D,b}=S_D'(P_{D,b})-\frac{a+b^2}{4}T_1-bT_3.
\end{equation}
For every fixed \(k\),
\[
 \|r_{D,b}\|_{H^k}\le C_k R(D),\qquad
 \|\partial_b r_{D,b}\|_{H^k}\le C_kD^{p_k}e^{-D}.
\]
If
\[
 M(P+\varepsilon)=M(P)=m,\qquad
 \langle\varepsilon,T_j\rangle=0\ (j=1,2,3),\qquad
 E_D(P+\varepsilon)=0,
\]
and \(N=\|\varepsilon\|_{H^1}\) is small, then
\begin{equation}\label{prof:exact-energy-comparison}
 0=E_D(P_{D,b})+\mathcal H_{P_{D,b}}(\varepsilon)
                      +\langle r_{D,b},\varepsilon\rangle,
\end{equation}
and
\begin{equation}\label{prof:branch-comparison}
 |b-B(D)|\le C\,\frac{N^2+R(D)N}{B_0(D)}.
\end{equation}
\end{lemma}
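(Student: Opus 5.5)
The plan has three parts: compute $S_D'(P_{D,b})$ explicitly to obtain the residual bounds, derive \eqref{prof:exact-energy-comparison} directly from the conservation constraints, and get \eqref{prof:branch-comparison} by a mean-value argument along the $b$-variable using \eqref{prof:energy-derivative}.

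\emph{The residual.} First we compute the action of the linear part on one chirped image. For $e_b(y)=e^{-ib|y|^2/4}$ one has $\nabla(e_bf)=e_b(\nabla f-\tfrac{ib}{2}yf)$, hence
\[
 -\Delta(e_bf)=e_b\Bigl(-\Delta f+ib\Lambda f+\frac{b^2|y|^2}{4}f\Bigr).
\]
We apply this with $f=f_a=Q+a\rho$. The equation for $Q$ and \eqref{prof:rho} give
\[
 -\Delta f_a+f_a-f_a^3=\frac{a}{4}|y|^2Q+O(a^2)
\]
in every fixed $H^k$, with exponential weights. Summing over the lattice, the linear terms periodize exactly because $Z_{D,b}$ is a sum of translates. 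The cubic term is replaced by $\sum_\ell e_b(\cdot-D\ell)f_a^3(\cdot-D\ell)$ at cost $C_kR(D)$, by Lemma~\ref{prof:products}. The diagonal contributions are therefore
\[
 \frac{a+b^2}{4}Z_{D,b}(|y|^2Q)+ibZ_{D,b}\Lambda Q
\]
up to the terms $ibaZ_{D,b}\Lambda\rho$, $\tfrac{b^2a}{4}Z_{D,b}(|y|^2\rho)$ and $O(a^2)$. Since $|a|\lesssim D^{1/2}e^{-D}$ and $b\lesssim B_0$, each of these is $O(R(D))$ in $H^k$; for example $|ab|\lesssim D^{1/4}e^{-3D/2}\ll R(D)$. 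This is exactly $\frac{a+b^2}{4}T_1+bT_3$, which proves the bound on $r_{D,b}$. For $\partial_br_{D,b}$ we differentiate the same exact expression. A $b$-derivative falls either on a chirp, inserting $|y-D\ell|^2$ weights, or on $a$, where Lemma~\ref{prof:mass} gives $|\partial_ba|\lesssim|b|D^pe^{-D}$. The cross terms are handled by the parameter-derivative part of Lemma~\ref{prof:products}, and the result is a polynomial times $e^{-D}$.

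\emph{The identity.} The constraints $M(P+\varepsilon)=M(P)=m$ and $E_D(P+\varepsilon)=0$ give
\[
 S_D(P+\varepsilon)-S_D(P)=-E_D(P).
\]
By the definition of $\mathcal H_P$, the left-hand side equals $\mathcal H_P(\varepsilon)+\langle S_D'(P),\varepsilon\rangle$. Inserting \eqref{prof:stationary-residual}, the orthogonality $\langle\varepsilon,T_1\rangle=\langle\varepsilon,T_3\rangle=0$ removes the two modulation terms. This leaves $\langle r_{D,b},\varepsilon\rangle$, which yields \eqref{prof:exact-energy-comparison}; the condition on $T_2$ is not needed here.

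\emph{The branch comparison.} We bound $|\mathcal H_P(\varepsilon)|\le CN^2$. The quadratic part is controlled by $\|\varepsilon\|_{H^1}^2$ times $1+\|P\|_{L^\infty}^2$. The cubic and quartic parts are controlled by $\|\varepsilon\|_{L^3}^3,\|\varepsilon\|_{L^4}^4\lesssim N^3+N^4$, using Sobolev/Gagliardo--Nirenberg inequalities on $\mathbb T_D^2$ with constants uniform in $D\ge1$; these follow from the Euclidean ones by a partition of unity with unit-size cells. Together with $|\langle r,\varepsilon\rangle|\le C_0R(D)N$, this gives $|E_D(P_{D,b})|\le C(N^2+RN)$. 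Since $E_D(P_{D,B(D)})=0$ and $\partial_bE_D(P_{D,b})\ge cB_0$ on $[B_0/4,4B_0]$ by \eqref{prof:energy-derivative}, an interval containing both $b$ and $B(D)$, the mean value theorem gives $|b-B(D)|\le |E_D(P_{D,b})|/(cB_0)$, which is \eqref{prof:branch-comparison}.

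The main obstacle is the residual step. One must verify that no diagonal error larger than $R(D)$ survives, in particular that $a$ enters only through $\frac a4|y|^2Q$ at first order. One must also check that the cubic cross terms are of size $R(D)$ rather than the larger $D^{1/2}e^{-D}$; this is precisely why the product form of Lemma~\ref{prof:products} is used instead of cutting individual tails.
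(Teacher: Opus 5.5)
Your proposal is correct and follows essentially the same route as the paper. You use the single-image conjugation formula for $S_D'$ with the mixed cubic sum controlled by Lemma~\ref{prof:products}, get the identity from equal mass and zero energy with the $T_1,T_3$ components removed by orthogonality, and obtain the branch bound from $|\mathcal H_P(\varepsilon)|\le CN^2$ plus the mean value theorem with \eqref{prof:energy-derivative}. Your direct use of $S_D=E_D+\frac12 M$ is only a rephrasing of the paper's step through $2\langle P,\varepsilon\rangle+\|\varepsilon\|_2^2=0$.
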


\begin{proof}
For one image, conjugating by \(e^{-ib|y|^2/4}\) gives
\[
 S_D'(e^{-ib|y|^2/4}f_a)
 =e^{-ib|y|^2/4}
 \left[\frac{a+b^2}{4}|y|^2Q+ib\Lambda Q
 +iab\Lambda\rho+\frac{ab^2}{4}|y|^2\rho
 -3a^2Q\rho^2-a^3\rho^3\right].
\]
After summation, the additional term is the negative mixed cubic
sum of Lemma~\ref{prof:products}.  The displayed remaining central
terms are \(O(R)\) in every fixed Sobolev norm.  Their first
\(b\)-derivative, and the differentiated cross terms, are bounded
by a polynomial times \(e^{-D}\).  This proves the residual bounds.

The exact mass equality implies
\[
 2\langle P,\varepsilon\rangle+\|\varepsilon\|_2^2=0.
\]
Expanding the action rather than the energy therefore gives
\[
 E_D(P+\varepsilon)-E_D(P)
 =\langle S_D'(P),\varepsilon\rangle+\mathcal H_P(\varepsilon).
\]
The \(T_1,T_3\) components in
\eqref{prof:stationary-residual} vanish by the stated
orthogonalities, proving \eqref{prof:exact-energy-comparison}.
Uniform Sobolev embeddings on \(\mathbb T_D^2\), \(D\ge1\), give
\[
 |\mathcal H_P(\varepsilon)|
 \le C(N^2+N^3+N^4)\le CN^2.
\]
The embeddings follow, for example, by a cover by unit squares
with bounded overlap.  Hence
\(|E_D(P_{D,b})|\le C(N^2+RN)\).
Apply the mean value theorem between \(b\) and \(B(D)\), and use
\eqref{prof:energy-derivative}.  This proves
\eqref{prof:branch-comparison}.
\end{proof}

The scalar branch and the periodic profiles are now fixed.  In the next
section they serve as coordinates for exact solutions.  All parameter
derivatives will be taken after transport to the physical torus; no
dilation operator will be applied to an unknown periodic function.

\section{The terminal construction}\label{sec:construction}

\subsection{Coordinates, constraints, and the positive Hessian}

We work in the subspace of functions even in each coordinate.  This
symmetry fixes the center and removes the translation directions from
the coercivity estimate.  The additional symmetry under interchange
of the coordinates will be preserved by the terminal data, but is
not needed in the local argument.

It is convenient to identify the changing scaled torus with the fixed
physical torus by the isometry
\[
 I_D f(x)=\lambda^{-1}f(x/\lambda),\qquad \lambda=L/D.
\]
For an integer \(k\ge0\), define
\begin{equation}\label{mod:scaled-norm}
 \|w\|_{k,\lambda}^2
 =\sum_{|\beta|\le k}\lambda^{2|\beta|}
      \|\partial_x^\beta w\|_{L^2(\mathbb T_L^2)}^2.
\end{equation}
We write \(H^k_\lambda\) for \(H^k(\mathbb T_L^2)\) equipped
with this scaled norm.
Thus \(\|I_D f\|_{k,\lambda}=\|f\|_{H^k(\mathbb T_D^2)}\),
with the derivative definition of the integer Sobolev norm.
Write \(V_{D,b}=I_DP_{D,b}\), and let \(T_1,T_2,T_3\) be the tests
in Lemma~\ref{prof:energy-comparison}.

In this section \(P=P_{D,b}\), \(V=V_{D,b}\), and the scalar
profile quantities are evaluated at the current \(D,b\).
Subscripts \(t\) and \(s\) denote differentiation along the parameter
curve; for physical profiles and operators, the coordinate \(x\)
is held fixed, as is the argument of an operator.
The notation \(a_D,a_b\) denotes the partial
derivatives of \(a(D,b)\).

The choice of tests separates scale, phase, and quadratic phase.
The following elementary calculation gives coordinates with constants
that remain uniform as the bubble concentrates.

\begin{lemma}[Local modulation coordinates]\label{mod:coordinates}
For \(D\) large and \(|b|\le C_*B_0(D)\), a reflection-even function
sufficiently close to \(e^{i\gamma}V_{D,b}\) has unique nearby
parameters \((\log\lambda,\gamma,b)\) for which
\begin{equation}\label{mod:decomposition}
 u=e^{i\gamma}(V_{D,b}+w),\qquad
 \varepsilon=I_D^{-1}w,\qquad
 \langle\varepsilon,T_j\rangle=0\quad(j=1,2,3).
\end{equation}
The phase is understood locally modulo \(2\pi\).  The neighborhood
and inverse Jacobian bounds are uniform when distance is measured
in the scaled \(H^1\) norm.  The resulting parameters lie in the
fixed chart \(|b|<\delta\) of Lemma~\ref{prof:mass}; membership
in the smaller interval \(|b|\lesssim B_0(D)\) is a separate
condition, imposed in the bootstrap below.
\end{lemma}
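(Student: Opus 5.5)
The plan is to apply the implicit function theorem to the map
\[
 \mathcal G(u;\mu,\gamma,b)=\bigl(\langle \varepsilon,T_1\rangle,\ \langle\varepsilon,T_2\rangle,\ \langle\varepsilon,T_3\rangle\bigr),\qquad
 \varepsilon=I_D^{-1}\bigl(e^{-i\gamma}u-V_{D,b}\bigr),\quad \mu=\log\lambda .
\]
The tests are evaluated at the current \(D=L/\lambda\) and \(b\). To keep all constants uniform as \(D\to\infty\), I would pull everything back to the scaled torus \(\mathbb T_D^2\). There \(\|I_D f\|_{1,\lambda}=\|f\|_{H^1(\mathbb T_D^2)}\), and the dependence on \(\mu\) becomes a rescaling. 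Concretely, I would fix a reference point \((D_1,\gamma_1,b_1)\) and write \(u=e^{i\gamma_1}(V_{D_1,b_1}+w_1)\). For nearby parameters I would express
\[
 I_D^{-1}\bigl(e^{-i\gamma}u\bigr)
 =e^{i(\gamma_1-\gamma)}\,(D_1/D)\,(P_{D_1,b_1}+\varepsilon_1)\bigl(\cdot\,D_1/D\bigr).
\]
The \(\mu\)-derivative then acts by the generator \(-\Lambda\) on the known profile. For the unknown part \(\varepsilon_1\), I would move it onto the smooth, exponentially localized tests by duality, so that no derivative falls on \(\varepsilon_1\). This is the standard device that makes the map \(C^1\) in the parameters with only \(H^1\) (indeed \(L^2\)) control of \(u\).

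The key step is the Jacobian at \(\varepsilon=0\). Differentiating \(e^{-i\gamma}\) gives the direction \(-iP\). Differentiating in \(\mu\) gives \(\Lambda P\) plus a term \(\partial_D P\,\partial_\mu D\). Differentiating in \(b\) gives \(\partial_bP\). Up to errors from image overlaps and from the small parameters \(a,b\) (Lemma~\ref{prof:mass}, Lemma~\ref{prof:products}), these directions are \(iQ\), \(\Lambda Q\), and \(-\tfrac{i}{4}|y|^2Q\) in the central cell. The \(\partial_DP\) term is \(O(D^pe^{-D})\), since it only moves images and \(a\).

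The leading \(3\times3\) matrix of pairings \(\langle\text{direction},T_j\rangle\) then has the following entries:
\[
 \langle \Lambda Q,|y|^2Q\rangle=-I_2,\qquad
 \langle -iQ,i\rho\rangle=-c_0,\qquad
 \langle -\tfrac{i}{4}|y|^2Q,i\Lambda Q\rangle=\tfrac{I_2}{4}.
\]
The remaining entries come in two kinds. Some vanish by the real/imaginary split, such as \(\langle\Lambda Q,i\rho\rangle=0\). The others are cross terms of the same type, e.g. \(\langle iQ,i\Lambda Q\rangle=\langle Q,\Lambda Q\rangle=0\) by critical scaling, and \(\langle\Lambda Q,i\Lambda Q\rangle=0\). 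After ordering the rows and columns appropriately, the matrix is therefore triangular, with nonzero diagonal entries proportional to \(I_2\) and \(c_0\). All corrections are \(O(|b|+D^pe^{-D})\), uniformly on \(|b|\le C_*B_0(D)\). This gives an inverse Jacobian bound independent of \(D\).

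Next I would bound the variation of the Jacobian in a neighborhood. When \(\varepsilon\neq0\), the Jacobian acquires terms \(\langle\varepsilon,\partial T_j\rangle\). These are bounded by \(C\|\varepsilon\|_{L^2}\), because the derivatives of the tests in the parameters are \(\Lambda\)-, \(i\)- or \(|y|^2\)-weighted versions of the tests and remain in \(L^2\) uniformly in \(D\). The quantitative inverse function theorem, in a neighborhood of fixed size in \((\mu,\gamma,b)\) and in scaled \(H^1\), then gives existence and uniqueness of the parameters, together with smooth dependence and uniform inverse Jacobian bounds. Evenness is preserved automatically, since all profiles and tests are reflection-even. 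The phase is defined locally modulo \(2\pi\). Because the neighborhood in \(b\) is of fixed size \(\delta\), the output \(b\) stays in the chart of Lemma~\ref{prof:mass}.

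I expect the main obstacle to be uniformity in \(D\) of the \(\mu\)-derivative. Changing \(\lambda\) changes the period \(D\) of the scaled torus, so the periodic sums and \(a(D,b)\) move as well. I would handle this by transporting all objects to the fixed physical torus, as the statement suggests, and using the bound \(|\partial_Da|\le CD^pe^{-D}\) together with Lemma~\ref{prof:products}. Together these show that the image-moving contributions are exponentially small perturbations of the central generator \(\Lambda Q\).
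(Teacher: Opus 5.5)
Your proposal is correct and is essentially the paper's proof. Both apply the implicit function theorem to the three test pairings and let every parameter derivative fall only on the explicit profile and tests; your pull-back and duality device is equivalent to the paper's differentiation of each image at its fixed physical centre. Both use the limiting triangular Jacobian built from $\langle |y|^2Q,\Lambda Q\rangle=-I_2$, $\langle Q,\Lambda Q\rangle=0$ and $\langle Q,\rho\rangle=c_0$, with determinant $16c_0^3$ up to sign conventions. Both treat a fixed small $b$-neighbourhood $|b|\le\delta$ as an $O(\delta)$ perturbation of that matrix.

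One small imprecision: $\Lambda P$ and $\partial_DP\,\partial_\mu D$ are separately non-periodic on $\mathbb T_D^2$. The $O(D^pe^{-D})$ bound for the image-moving part holds for its pairings with the localized tests, not as a function on a cell. This is harmless, since only those pairings enter the Jacobian.
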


\begin{proof}
Apply the implicit function theorem to
\[
 \left\langle e^{-i\gamma}u-V_{D,b},I_DT_j\right\rangle,
 \qquad j=1,2,3,
\]
on the fixed physical torus.  In the limit \(D\to\infty\), \(b\to0\),
the three rescaled profile tangents are
\[
 -\Lambda Q,\qquad iQ,\qquad -i|y|^2Q/4.
\]
Their pairings with the three limiting tests form the matrix
\begin{equation}\label{mod:coordinate-matrix}
 \begin{pmatrix}
 I_2&0&0\\
 0&c_0&-\langle |y|^2Q,\rho\rangle/4\\
 0&0&I_2/4
 \end{pmatrix}.
\end{equation}
Its determinant is \(16c_0^3>0\).  The identity
\(\langle |y|^2Q,\Lambda Q\rangle=-I_2\), and
\(\langle Q,\Lambda Q\rangle=0\), account for its triangular form.

The image terms and the derivatives of \(a(D,b)\) perturb this
matrix by \(o(1)\), while allowing \(|b|\le\delta\) adds an
\(O(\delta)\) perturbation.  Choose the fixed \(\delta\) small
enough that the matrix remains invertible.  The broad mass
extension in Lemma~\ref{prof:mass} makes the profile defined on
this entire neighborhood.  For uniformity of the implicit function theorem,
differentiate each image at its fixed physical center \(Lk\):
the derivatives in \(\log\lambda\) and \(b\) insert polynomial
weights in \((x-Lk)/\lambda\).  All resulting profiles have
uniformly bounded scaled Sobolev norms.  Lemma~\ref{prof:mass}
controls the derivatives of \(a\).  The finite collection of
pairing maps therefore has a uniform \(C^2\) bound, and its
Jacobian has a uniformly bounded inverse.  This proves the claim.
\end{proof}

The coordinates alone do not control the error.  For that purpose
we use the action Hessian
\begin{equation}\label{mod:hessian}
 \mathcal L_P z=-\Delta z+z-2|P|^2z-P^2\overline z.
\end{equation}
It is self-adjoint on the real \(L^2\) space.  Its negative and
neutral directions are removed by the mass condition and the
coordinate constraints.

A pairing between a Hessian and \(H^1\) functions is understood
as its continuous quadratic form, or the associated polarized form,
obtained by integration by parts. This convention also applies to
the physical Hessian \(\mathcal L\) below.

\begin{lemma}[Uniform constrained coercivity]\label{mod:coercivity}
For \(D\) large, \(|b|\le C_*B_0(D)\), and reflection-even
\(\varepsilon\), one has
\begin{equation}\label{mod:coercivity-projections}
 \begin{split}
 \langle\mathcal L_P\varepsilon,\varepsilon\rangle
 \ge c\|\varepsilon\|_{H^1}^2
 -C\big(&|\langle\varepsilon,Z_{D,b}Q\rangle|^2
       +|\langle\varepsilon,T_1\rangle|^2\\
       &+|\langle\varepsilon,T_2\rangle|^2\big).
 \end{split}
\end{equation}
Consequently, if \eqref{mod:decomposition} holds and
\(M(P+\varepsilon)=M(P)=m\), then, for
\(\|\varepsilon\|_{H^1}\) sufficiently small,
\begin{equation}\label{mod:relative-coercivity}
 c\|\varepsilon\|_{H^1}^2
 \le \mathcal H_P(\varepsilon)
 \le C\|\varepsilon\|_{H^1}^2.
\end{equation}
The same positive quadratic estimate holds for
\(\langle\mathcal L_P z,z\rangle\) on the linear space
\(\langle z,P\rangle=\langle z,T_j\rangle=0\), \(j=1,2,3\).
\end{lemma}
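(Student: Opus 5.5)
The proof has two parts. First, coercivity of \(\mathcal L_P\) on the periodic torus is reduced to the classical Euclidean coercivity of \((L_+,L_-)\) by a localization argument. Second, the relative estimate \eqref{mod:relative-coercivity} follows from the mass constraint and the modulation orthogonalities.

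\textbf{Euclidean input.} We use the standard facts for even functions on \(\mathbb R^2\), writing \(z=z_1+iz_2\). Even parity removes \(\nabla Q\) from the kernel of \(L_+\). Since \(L_+\) has exactly one negative eigenvalue and \(\langle L_+^{-1}Q,Q\rangle<0\), we get
\[
\langle L_+z_1,z_1\rangle\ge c\|z_1\|_{H^1}^2-C\bigl(\langle z_1,Q\rangle^2+\langle z_1,|y|^2Q\rangle^2\bigr).
\]
Since \(\ker L_-=\operatorname{span}Q\) and \(\langle\rho,Q\rangle=c_0\neq0\), we get
\[
\langle L_-z_2,z_2\rangle\ge c\|z_2\|_{H^1}^2-C\langle z_2,\rho\rangle^2.
\]
The first estimate is the Weinstein/Martel--Merle/Merle--Raphaël type coercivity; see \cite[(1.20)--(1.23)]{MartelRaphael18}. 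The constraint \(T_1\) controls \(|y|^2Q\) and \(Z_{D,b}Q\) controls \(Q\), both in the real part. \(T_2=iZ_{D,b}\rho\) controls \(\rho\) in the imaginary part. This matches the projections in \eqref{mod:coercivity-projections}.

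\textbf{Transfer to \(\mathbb T_D^2\).} First remove the chirp: set \(\tilde\varepsilon=e^{ib|y|^2/4}\varepsilon\) near the centre image. Since \(|b|\lesssim B_0\) and \(bD^2\to0\), conjugation changes the quadratic form and the pairings by \(o(1)\|\varepsilon\|_{H^1}^2\) on the relevant region. Next, by the mass correction and Lemma~\ref{prof:products}, \(|P|^2\) and \(P^2\) on one cell differ from \(Q^2\) centred at the origin by \(O(D^pe^{-D/2})\) in \(L^\infty\) away from the cell boundary. Near the boundary both are \(O(e^{-D/2})\).

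Now take a smooth cutoff \(\chi\) supported in \(|y|\le D/4\), equal to \(1\) on \(|y|\le D/8\), and write \(\chi_2=\sqrt{1-\chi^2}\). The IMS localization formula gives
\[
\langle\mathcal L_P\varepsilon,\varepsilon\rangle=\langle\mathcal L_P\chi\varepsilon,\chi\varepsilon\rangle+\langle\mathcal L_P\chi_2\varepsilon,\chi_2\varepsilon\rangle-\int(|\nabla\chi|^2+|\nabla\chi_2|^2)|\varepsilon|^2.
\]
The error term is \(O(D^{-2})\|\varepsilon\|_2^2\). On the support of \(\chi_2\) the potential is exponentially small, so the outer piece is bounded below by \(\tfrac12\|\chi_2\varepsilon\|_{H^1}^2\). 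The inner piece is compared with the Euclidean form. Replacing \(\langle\chi\varepsilon,\cdot\rangle\) by \(\langle\varepsilon,\cdot\rangle\) in the projections costs only the exponentially small tails of \(Q\), \(|y|^2Q\) and \(\rho\) outside \(|y|\le D/8\). Combining the inner and outer estimates proves \eqref{mod:coercivity-projections} for large \(D\).

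\textbf{The relative estimate.} The orthogonalities \(\langle\varepsilon,T_1\rangle=\langle\varepsilon,T_2\rangle=0\) kill two of the projections. The mass condition gives \(2\langle\varepsilon,P\rangle=-\|\varepsilon\|_2^2\). Since \(P=Z_{D,b}Q+O(D^{1/2}e^{-D})\), it follows that
\[
|\langle\varepsilon,Z_{D,b}Q\rangle|\lesssim\|\varepsilon\|_2^2+D^{1/2}e^{-D}\|\varepsilon\|_2,
\]
whose square is negligible. Hence \(\langle\mathcal L_P\varepsilon,\varepsilon\rangle\ge c\|\varepsilon\|_{H^1}^2\). Next, \(\mathcal H_P(\varepsilon)-\tfrac12\langle\mathcal L_P\varepsilon,\varepsilon\rangle\) consists of cubic and quartic terms. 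By the uniform Sobolev embeddings used in Lemma~\ref{prof:energy-comparison}, these are \(O(N^3)\), which gives both bounds in \eqref{mod:relative-coercivity} for small \(N\). The linear statement is the same argument with \(\langle z,P\rangle=0\) exactly. Note that \(T_3\) is not needed for coercivity; it serves only to fix the parameter.

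\textbf{Main obstacle.} The hard step is the uniformity of the localization. One must verify that the chirp and the periodic images contribute only \(o(1)\) to the form, uniformly in \(D\). The danger is the chirp derivative \(b|y|\) on \(|y|\sim D\), which is harmless because \(bD\to0\) exponentially. It also requires that the Euclidean coercivity constant is stable under the projection perturbations. I would handle the latter by the standard contradiction/compactness argument on the localized piece, using the exponential decay of the test functions.
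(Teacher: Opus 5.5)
Your proposal is correct and follows the paper's proof essentially step for step. You cite the Euclidean even-function coercivity with pairings $Q$, $|y|^2Q$, $i\rho$ and transfer it to $\mathbb T_D^2$ with a quadratic partition of unity. You then absorb the $Z_{D,b}Q$ pairing through the exact mass identity, and bound $\mathcal H_P(\varepsilon)-\tfrac12\langle\mathcal L_P\varepsilon,\varepsilon\rangle$ by cubic and quartic terms. The differences are minor: you take the cutoff radius comparable to $D$ and conjugate out the chirp, whereas the paper fixes a large radius $r$ before sending $D\to\infty$ and treats $a,b$ as an $L^\infty$ perturbation of the case $a=b=0$.

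One correction to your justification of the $L_+$ estimate: here $L_+^{-1}Q=-\tfrac12\Lambda Q$ and $\langle\Lambda Q,Q\rangle=0$, so $\langle L_+^{-1}Q,Q\rangle=0$, not $<0$. This mass-critical degeneracy along $\Lambda Q$ is exactly why the $|y|^2Q$ pairing (with $\langle\Lambda Q,|y|^2Q\rangle=-I_2\neq0$) is needed. The estimate you then cite is nevertheless the correct one, so nothing downstream breaks.
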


\begin{proof}
The Euclidean ground-state coercivity estimate is
\[
 \begin{split}
 \langle L_+\operatorname{Re}z,\operatorname{Re}z\rangle
 +\langle L_-\operatorname{Im}z,\operatorname{Im}z\rangle
 \ge c\|z\|_{H^1}^2-C\big(&|\langle z,Q\rangle|^2
 +|\langle z,|y|^2Q\rangle|^2\\
 &+|\langle z,i\rho\rangle|^2\big)
 \end{split}
\]
for reflection-even \(z\); the usual additional translation
pairings vanish by symmetry.  This is the standard constrained
ground-state estimate; see \cite[(1.24)]{MartelRaphael18}.

We transfer it to the torus by localization.  Choose a large fixed
radius \(r\), then \(D>8r\), and an even quadratic partition of
unity \(\chi_r^2+\zeta_r^2=1\), with \(\chi_r=1\) on the ball
of radius \(r\), \(\chi_r=0\) outside the ball of radius \(2r\),
and gradients bounded by \(C/r\).  Apply the Euclidean estimate
to \(\chi_r\varepsilon\), extended by zero.  Its pairings differ
from the periodized pairings by at most
\(o_{r\to\infty}(1)\|\varepsilon\|_2\), uniformly for \(D>8r\).
In the core, replacing \(\Phi_D\) by \(Q\) gives an exponentially
small error in \(D\).  On the support of \(\zeta_r\), the
periodic potential is \(O(e^{-cr})\), so the free quadratic form
is positive.  The localization identity loses at most
\(Cr^{-2}\|\varepsilon\|_2^2\).  First choose \(r\) large and then
\(D\) large to absorb these errors.

This proves \eqref{mod:coercivity-projections} for \(a=b=0\).
The estimates
\[
 \|Z_{D,b}f-Z_{D,0}f\|_2\le C_f|b|,\qquad
 \|P-\Phi_D\|_\infty\le C(|a|+|b|)
\]
follow from \(|e^{-ib|y|^2/4}-1|\le |b||y|^2/4\) and the
localized tails.  They perturb the quadratic form and its test
pairings by a quantity tending to zero.  This proves the general
case of \eqref{mod:coercivity-projections}.

The exact mass identity is
\begin{equation}\label{mod:mass-identity}
 \langle\varepsilon,Z_{D,b}Q\rangle
 =-\frac12\|\varepsilon\|_2^2
       -a\langle\varepsilon,Z_{D,b}\rho\rangle.
\end{equation}
Its square is bounded by
\(C(\|\varepsilon\|_{H^1}^2+a^2)\|\varepsilon\|_{H^1}^2\),
and is absorbable.  The two remaining bad pairings vanish.
Finally, \(\mathcal H_P\) is one half of the Hessian form plus
cubic and quartic terms, bounded by
\(C(\|\varepsilon\|_{H^1}^3+\|\varepsilon\|_{H^1}^4)\).
This proves \eqref{mod:relative-coercivity}.  On the linear
constraint space, the first pairing in
\eqref{mod:mass-identity} equals
\(-a\langle z,Z_{D,b}\rho\rangle\), which is absorbed in the
same way.
\end{proof}

For higher derivatives, it is useful to replace the nonlinear
mass surface by its tangent space and keep the difference as a
finite-dimensional error.  The required projection is symplectic,
rather than orthogonal.

\begin{lemma}[The symplectic projection]\label{mod:projection}
On the physical torus, put
\[
 \begin{gathered}
 J=-i,\qquad g_0=V_{D,b},\qquad g_j=I_DT_j\quad(1\le j\le3),\\
 W=\{z:\langle z,g_j\rangle=0,\ 0\le j\le3\},\qquad
 S=\operatorname{span}_{\mathbb R}\{Jg_j:0\le j\le3\}.
 \end{gathered}
\]
For \(D\) large these spaces give a direct sum \(W\oplus S\).
Let \(\Pi\) be the projection onto \(W\) along \(S\).
In operator expressions, \(I\) is the identity and \(\Pi^*\)
is the adjoint with respect to the real \(L^2\) pairing.
Then
\begin{equation}\label{mod:projection-identities}
 \Pi^2=\Pi,\qquad \Pi J=J\Pi^*,\qquad
 (\Pi J)^*=-\Pi J,\qquad \Pi\Pi_s\Pi=0.
\end{equation}
The last identity holds along any differentiable parameter curve.
The projection is uniformly bounded in every fixed scaled Sobolev
space, and
\begin{equation}\label{mod:projection-derivative}
 \|\Pi_s z\|_{k,\lambda}
 \le C_k\bigl(|\lambda_s/\lambda|+|b_s|\bigr)\|z\|_2.
\end{equation}
If \(w\) satisfies the mass and coordinate conditions above and
\(y=\Pi w\), \(n=(I-\Pi)w\), then
\begin{equation}\label{mod:normal-part}
 \|n\|_{k,\lambda}\le C_k\|w\|_2^2,\qquad
 \|w\|_{k,\lambda}\le C_k\|y\|_{k,\lambda}
\end{equation}
when \(\|w\|_2\) is small.

In \(DN(V)\), \(N\) denotes the cubic map \(z\mapsto |z|^2z\),
also written \(\mathcal N\) below, and \(D\) denotes its real
Fr\'echet derivative.

For
\begin{equation}\label{mod:physical-hessian}
 \mathcal L=-\lambda^2\Delta+1-\lambda^2 DN(V),\qquad
 DN(V)z=2|V|^2z+V^2\overline z,
\end{equation}
one has
\[
 c\|z\|_{1,\lambda}^2\le\langle\mathcal Lz,z\rangle
 \le C\|z\|_{1,\lambda}^2\qquad(z\in W).
\]
In addition, the normal space is almost invariant:
\begin{equation}\label{mod:normal-invariance}
 \|\Pi J\mathcal L n\|_{k,\lambda}
 \le C_k B_0(D)\|n\|_2\qquad(n\in S).
\end{equation}
\end{lemma}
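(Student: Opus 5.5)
The proof proceeds by finite-dimensional linear algebra on the physical torus, reduced to the rescaled torus \(\mathbb T_D^2\) by the isometry \(I_D\). This is legitimate because \(\|I_Df\|_{k,\lambda}=\|f\|_{H^k(\mathbb T_D^2)}\), and because \(J\) commutes with \(I_D\).

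\emph{Direct sum and projection formula.} First form the \(4\times4\) Gram matrix \(G_{jl}=\langle Jg_l,g_j\rangle\). Show that \(W\oplus S\) is a direct sum exactly when \(G\) is invertible. In that case
\[
 \Pi z=z-\sum_{j,l}(G^{-1})_{lj}\langle z,g_j\rangle Jg_l .
\]
Rescaled to \(\mathbb T_D^2\), the tests \(g_0,\dots,g_3\) become \(P,T_1,T_2,T_3\). As \(D\to\infty\) and \(b\to0\), these tend to \(Q\), \(|y|^2Q\), \(i\rho\), \(i\Lambda Q\). The entries of \(G\) are then the symplectic pairings \(\langle -iQ,i\rho\rangle=-c_0\), \(\langle -i|y|^2Q,i\Lambda Q\rangle=I_2\), and so on, the remaining ones vanishing by parity of real and imaginary parts. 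Compute the limiting matrix and check that it is antisymmetric and invertible, with determinant of order \(c_0^2I_2^2\). The image and chirp corrections are \(o(1)\) by Lemma~\ref{prof:products} and Lemma~\ref{prof:mass}, so \(G\) stays uniformly invertible.

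\emph{Algebraic identities and uniform bounds.} The identities \eqref{mod:projection-identities} are then algebra. From \(J^*=-J\) and the formula one gets \(\Pi^*z=z-\sum_{j,l}(G^{-1})_{lj}\langle z,Jg_l\rangle(-g_j)\), which is the projection onto \(S^{\perp}\) along \(\operatorname{span}\{g_j\}\). Checking \(\Pi J=J\Pi^*\) then reduces to the antisymmetry of \(G\). The identity \((\Pi J)^*=J^*\Pi^*=-J\Pi^*=-\Pi J\) follows at once. Differentiating \(\Pi^2=\Pi\) gives \(\Pi_s\Pi+\Pi\Pi_s=\Pi_s\), and multiplying by \(\Pi\) on both sides gives \(\Pi\Pi_s\Pi=0\). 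Uniform boundedness in \(H^k_\lambda\) holds because the \(g_j\) have uniformly bounded scaled norms. For \eqref{mod:projection-derivative}, differentiate the formula. Each \(s\)-derivative of \(g_j\) and of \(G^{-1}\) produces a factor \(\lambda_s/\lambda\) or \(b_s\) times profiles with polynomial weights, as in the proof of Lemma~\ref{mod:coordinates}, together with derivatives of \(a\) controlled by Lemma~\ref{prof:mass}. All terms are rank-finite with coefficients \(\langle z,\cdot\rangle\), which gives the bound by \(\|z\|_2\).

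\emph{Normal part and coercivity.} For \eqref{mod:normal-part}, note that \(\langle w,g_j\rangle=0\) for \(j\ge1\), while the mass identity gives \(\langle w,g_0\rangle=-\tfrac12\|w\|_2^2\). Hence \(n=(I-\Pi)w\) is a combination of the \(Jg_l\) with coefficients \(O(\|w\|_2^2)\), which gives the first bound. The second follows from \(w=y+n\) and \(\|w\|_2\le\|y\|_2+C\|w\|_2^2\), after absorbing. Coercivity of \(\mathcal L\) on \(W\) is the last assertion of Lemma~\ref{mod:coercivity}, transported by \(I_D\), since \(\mathcal L=I_D\mathcal L_PI_D^{-1}\).

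\emph{Almost invariance of \(S\) (main obstacle).} The estimate \eqref{mod:normal-invariance} is where the smallness \(B_0\) must appear. Compute \(\mathcal L_P\) on each generator \(J\tilde g_j\) of \(S\), in rescaled form, using the chirped kernel relations. The ingredients are the identities \eqref{prof:kernel} and \(L_+\rho=|y|^2Q/4\), conjugated through the chirp \(e^{-ib|y|^2/4}\) as in the proof of Lemma~\ref{prof:energy-comparison}. This should give
\[
 \mathcal L_P(J\tilde g_j)\in\operatorname{span}\{\tilde g_l\}+O_{H^k}(|b|+|a|+D^pe^{-D}).
\]
For example, \(\mathcal L_P(iP)\) equals \(iS_D'(P)\)-type terms of size \(O(b)\). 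Likewise, \(L_+\Lambda Q=-2Q\) gives \(\mathcal L_P(-iT_3)\approx 2Q\sim g_0\) up to \(O(b)\). Since \(\Pi\) annihilates \(J\) applied to the span \(\operatorname{span}\{g_l\}\), only the \(O(b)\) remainders survive. Because \(|b|\lesssim B_0\), \(|a|\lesssim D^{1/2}e^{-D}\ll B_0\), and the cross terms are also \(\ll B_0\), this yields the bound. The delicate point is to verify, for each of the four generators, that the leading Euclidean term lands exactly in \(\operatorname{span}\{g_l\}\) rather than merely near it. I would check this generator by generator with the identities \eqref{prof:kernel} and \eqref{prof:rho}.
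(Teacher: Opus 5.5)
Your proposal is correct and follows the paper's proof essentially step by step. It uses the same limiting $4\times4$ symplectic Gram matrix with determinant $c_0^2I_2^2$, the same algebra for the identities, the same finite-rank treatment of $n$ via $\langle w,g_0\rangle=-\tfrac12\|w\|_2^2$, coercivity transported from Lemma~\ref{mod:coercivity}, and the same check that $\mathcal L_\infty$ maps the limiting normal basis into the span of the tests (equivalently, $J\mathcal L_\infty$ preserves the normal space), with chirp, mass-correction and image errors of size $O(B_0)$ and norm equivalence on $S$. Two harmless sign slips: your displayed formula should read $\Pi^*z=z-\sum_{j,l}(G^{-1})_{lj}\langle z,Jg_l\rangle g_j$, not with $(-g_j)$, though your description of $\Pi^*$ and the reduction to antisymmetry of $G$ are right; and $\mathcal L_P(-iT_3)\approx L_+\Lambda Q=-2Q$, not $2Q$.
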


\begin{proof}
Use the symplectic form
\(\Omega(v,z)=\langle J^{-1}v,z\rangle\).
We write
\(S^\Omega=\{v:\Omega(v,z)=0\text{ for every }z\in S\}\).
The four vectors \(Jg_j\), after rescaling, converge to
\((-iQ,-i|y|^2Q,\rho,\Lambda Q)\).
Writing \(r_0=\langle |y|^2Q,\rho\rangle\), their symplectic
matrix converges to
\[
 \begin{pmatrix}
 0&0&c_0&0\\
 0&0&r_0&-I_2\\
 -c_0&-r_0&0&0\\
 0&I_2&0&0
 \end{pmatrix}.
\]
Its determinant is \(c_0^2I_2^2>0\).  Thus \(S\) is symplectic,
\(W=S^\Omega\), and the claimed direct sum follows.
The projection formula uses the inverse of this four by four
matrix and the four pairings with \(g_j\).
Symplectic orthogonality gives
\(\Pi J=J\Pi^*\); the other algebraic identities in
\eqref{mod:projection-identities} follow by taking adjoints and
differentiating \(\Pi^2=\Pi\).

All vectors in the projection formula, their duals, and their
fixed parameter derivatives have uniformly bounded scaled
Sobolev norms.  This follows by differentiating at the fixed
physical image centers, as in Lemma~\ref{mod:coordinates}.
It proves boundedness and \eqref{mod:projection-derivative}.
The constraint pairings of \(w\) are exactly
\[
 \langle w,g_j\rangle=0\quad(1\le j\le3),\qquad
 \langle w,g_0\rangle=-\frac12\|w\|_2^2.
\]
The finite-rank formula therefore gives the first estimate in
\eqref{mod:normal-part}.  At order zero, absorb
\(\|w\|_2\le\|y\|_2+C\|w\|_2^2\), and then use the same
normal estimate at order \(k\) to obtain the second.
The coercive estimate is precisely the linear assertion of
Lemma~\ref{mod:coercivity}, transported by \(I_D\).

Write \(\mathcal L_\infty\) for the real-linear Euclidean Hessian,
\[
 \mathcal L_\infty z
   =L_+\operatorname{Re}z+iL_-\operatorname{Im}z,
 \qquad z:\mathbb R^2\to\mathbb C .
\]
For the last assertion, the Euclidean operator sends the limiting
normal basis according to
\[
 J\mathcal L_\infty(-iQ)=0,\quad
 J\mathcal L_\infty(-i|y|^2Q)=4\Lambda Q,\quad
 J\mathcal L_\infty\rho=-i|y|^2Q/4,\quad
 J\mathcal L_\infty\Lambda Q=2iQ.
\]
Every image lies in the same normal space and is killed by the
limiting projection.  The actual localized operator, spanning
vectors, and projection differ from these identities by
\(O(|b|+|a|+D^p e^{-D})\) in every fixed scaled Sobolev norm.
One obtains this bound by conjugating each localized vector
by its own chirp and then estimating the cross-image products;
Lemma~\ref{prof:products} applies to the latter.
The bound is \(O(B_0)\), and finite-dimensional norm equivalence
on \(S\) proves \eqref{mod:normal-invariance}.
\end{proof}

\subsection{The exact modulation equations}

We next differentiate the coordinates along an exact solution.
The derivatives of the test functions are part of this calculation.
Keeping them produces a cancellation between the quadratic phase
and the motion of the tests.

Let \(s\) denote renormalized time, so \(dt/ds=\lambda^2\), and set
\begin{equation}\label{mod:defects}
 h=\lambda_s/\lambda,\qquad
 \mu=h+b,\qquad \omega=\gamma_s-1,\qquad
 \nu=b_s+b^2-2b\mu-a(D,b).
\end{equation}
Thus
\begin{equation}\label{mod:parameter-equations}
 D_s=D(b-\mu),\qquad
 b_s=a-b^2+2b\mu+\nu,\qquad \gamma_s=1+\omega.
\end{equation}
These are definitions until the error estimates below are proved.

\begin{lemma}[Residual and dynamic modulation]\label{mod:modulation}
Let a smooth reflection-even solution of mass \(m\) have the
decomposition \eqref{mod:decomposition}, with
\(D\) large, \(|b|\le C_*B_0(D)\), and
\(N=\|w\|_{1,\lambda}\) sufficiently small.
Then
\begin{equation}\label{mod:estimate}
 |\mu|+|\omega|+|\nu|
 \le C\bigl[R(D)+( |b|+D^p e^{-D})N+N^2\bigr].
\end{equation}
For \(U=e^{i\gamma}V_{D,b}\), define its full scaled residual
\[
 f=\lambda^2e^{-i\gamma}(iU_t+\Delta U+|U|^2U),
 \qquad \mathcal E=I_D^{-1}f.
\]
For every fixed \(k\),
\begin{equation}\label{mod:full-residual}
 \|f\|_{k,\lambda}=\|\mathcal E\|_{H^k}
 \le C_k\bigl[R(D)+|\mu|+|\omega|+|\nu|\bigr].
\end{equation}
The modulation vector is a smooth function of
\((D,b,\varepsilon)\) on the constraint surface.  At
\(\varepsilon=0\), its value \(\mathbf m^0(D,b)\) satisfies
\[
 |\mathbf m^0|\le CR(D),\qquad
 |\partial_D^r\partial_b^\ell\mathbf m^0|
 \le C_{r,\ell}D^{p_{r,\ell}}e^{-D}
\]
for each fixed finite derivative collection.
\end{lemma}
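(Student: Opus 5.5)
The proof has two parts: first compute the residual \(f\) in terms of the modulation defects, then differentiate the three orthogonality conditions together with the mass constraint to solve for those defects.

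\textbf{Step 1: the residual.} Write \(U=e^{i\gamma}V_{D,b}\) and compute \(iU_t\) in renormalized time. Since \(dt/ds=\lambda^2\), differentiating the profile at fixed physical centres gives
\[
\lambda^2 e^{-i\gamma}iU_t=-\gamma_s V+i\bigl(-(\lambda_s/\lambda)\,I_D\Lambda_\star P+b_s\,I_D\partial_bP\bigr),
\]
where \(\Lambda_\star\) is the periodized dilation generator and \(\partial_bP\) includes the terms coming from \(a(D,b)\). Combine this with
\[
\lambda^2(\Delta V+|V|^2V)=-I_D\bigl(S_D'(P)-P\bigr)
\]
and substitute the decomposition \eqref{prof:stationary-residual} of \(S_D'(P)\). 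Then use \eqref{mod:parameter-equations} to express \(\gamma_s,\lambda_s/\lambda,b_s\) through \(\omega,\mu,\nu\). The quadratic-phase identities
\[
\partial_b e^{-ib|y|^2/4}=-\tfrac{i}{4}|y|^2e^{-ib|y|^2/4},\qquad \Lambda\bigl(e^{-ib|y|^2/4}f\bigr)=e^{-ib|y|^2/4}\bigl(\Lambda f-\tfrac{ib}{2}|y|^2f\bigr)
\]
should cancel the terms \(\tfrac{a+b^2}{4}T_1\) and \(bT_3\) exactly against \(-bI_D\Lambda_\star P\) and \((a-b^2+2b\mu)\partial_bP\). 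What remains is a linear combination of \(\omega,\mu,\nu\) times uniformly bounded localized profiles, plus \(r_{D,b}\) and the terms carrying \(a_D,a_b\). By Lemma~\ref{prof:mass}, \(D_s a_D\) and \(b_s a_b\) are \(O(D^pe^{-D})\) times the defects or times \(|b|\). Lemma~\ref{prof:energy-comparison} bounds \(r_{D,b}\) by \(R(D)\), so this gives \eqref{mod:full-residual}. I expect the bookkeeping of this cancellation to be the main obstacle: every \(O(b)\) and \(O(b^2)\) term must cancel exactly, with the only exception of coefficients of the defects.

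\textbf{Step 2: the modulation system.} Write the equation for \(w\):
\[
i w_s-\mathcal L w \text{-type linear part} = -f+\text{(modulation)}\,w+\text{quadratic terms}.
\]
Differentiate \(\langle w,g_j\rangle=0\) for \(j=1,2,3\) and the mass relation \(\langle w,g_0\rangle=-\tfrac12\|w\|_2^2\). This gives a \(4\times4\) system in \((\omega,\mu,\nu)\) and the mass equation. Its matrix is the symplectic matrix of Lemma~\ref{mod:projection} up to \(o(1)\), hence invertible.

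The right-hand sides consist of four kinds of terms:
\begin{itemize}
\item \(\langle I_D r,g_j\rangle=O(R)\);
\item \(\langle \mathcal Lw,Jg_j\rangle\), which by the generalized-kernel relations \eqref{prof:kernel} (through the limiting identities in the proof of \eqref{mod:normal-invariance}) equals \(\langle w,\text{span of }g\rangle\) plus \(O(|b|+D^pe^{-D})N\), and the span pairings vanish up to \(O(N^2)\);
\item \(\langle w,\partial_s g_j\rangle\), which is \(O(|b|+|\text{defects}|)N\), using \(D_s=D(b-\mu)\) and the polynomial weights;
\item nonlinear terms, which are \(O(N^2)\).
\end{itemize}
Absorbing the defect-times-\(N\) terms for small \(N\) gives \eqref{mod:estimate}.

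\textbf{Step 3: smoothness and the value at \(\varepsilon=0\).} The system is smooth in \((D,b,\varepsilon)\) because all coefficients are pairings of \(\varepsilon\) with smooth profile families. Its solution is therefore smooth by Cramer's rule and the implicit function theorem. At \(\varepsilon=0\) the right-hand side reduces to the pairings \(\langle r_{D,b},T_j\rangle\) and \(\langle r_{D,b},P\rangle\), which gives \(|\mathbf m^0|\le CR\). The bounds on its derivatives follow from differentiating \(r_{D,b}\), the matrix entries and \(a(D,b)\). Lemmas~\ref{prof:products} and~\ref{prof:mass} bound each such parameter derivative by \(C D^pe^{-D}\).
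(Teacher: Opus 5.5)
Your overall route matches the paper's. You write $\mathcal E$ as an explicit affine function of $(\mu,\omega,\nu)$; your use of $r_{D,b}$ and the chirp identities reproduces \eqref{mod:exact-residual}, and the $O(b)$, $O(b^2)$ cancellations you anticipate do occur. You then differentiate the orthogonality conditions, use the generalized-kernel relations and the exact mass identity, invert a finite matrix, and evaluate at $\varepsilon=0$. Your cruder bound $O(|b|N)$ for $\langle\mathcal Lw,Jg_j\rangle$ and $\langle w,(g_j)_s\rangle$ suffices for \eqref{mod:estimate} as stated. The paper instead cancels the chirp term $ib\Lambda(i\psi_j)$ in $\mathcal L_P(iT_j)$ against the $b$-part of the test motion in \eqref{mod:Kj}, which gives the sharper $D^pe^{-D}N$. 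Two minor points: your display for $iU_t$ omits the term $D_sa_D$, which you account for later, and the bound on $r_{D,b}$ in Lemma~\ref{prof:energy-comparison} is stated only for $b\in[B_0/4,4B_0]$; its proof works verbatim for $|b|\le C_*B_0$.

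The solvability step, however, is wrong as written. There are only three unknowns, and the differentiated mass relation is not an independent equation. Since $M(V+w)=M(V)=m$ for all $s$, it holds identically. After you substitute the $w$-equation, its defect coefficients are $\langle R_\ell,iP\rangle$, with $R_\ell$ as in \eqref{mod:residual-directions}. These vanish at leading order, because $\int f_a\Lambda f_a=0$ and $\langle P,iP\rangle=\langle|y|^2f_a,if_a\rangle=0$. So there is no square $4\times4$ system, and invertibility of the symplectic matrix of Lemma~\ref{mod:projection} does not by itself give invertibility of the $3\times3$ minor you actually need.

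The correct system uses the three test rows alone. Its matrix pairs the modulation tangents $-\Lambda Q$, $iQ$, $-i|y|^2Q/4$ with the tests, and it converges to \eqref{mod:coordinate-matrix}, whose determinant is $16c_0^3$. The defect-times-$w$ terms perturb it by $O(N)$. The mass relation enters only on the right-hand side, as $\langle w,g_0\rangle=-\tfrac12\|w\|_2^2$ in the $i\Lambda Q$ row. This happens because $L_+\Lambda Q=-2Q$ produces a $Q$-pairing that is not one of the imposed tests. Your remark that the span pairings are $O(N^2)$ already covers this. With that replacement the rest of your argument, including Step~3, goes through.
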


\begin{proof}
Write \(f_a=Q+a\rho\), and, in physical coordinates,
\[
 V(s,x)=\lambda^{-1}\sum_{k\in\mathbb Z^2}
 e^{-ib|r_k|^2/4}f_a(r_k),\qquad r_k=(x-Lk)/\lambda.
\]
Then \((r_k)_s=-hr_k\).  Differentiating this expression and
using the Laplacian product rule gives
\begin{equation}\label{mod:exact-residual}
 \begin{split}
 \mathcal E={}&Z_{D,b}\left[
 -i\mu\Lambda f_a-\omega f_a+\frac{\nu}{4}|y|^2f_a
 +ia_s\rho
 +a^2\left(3Q\rho^2+\frac{|y|^2\rho}{4}\right)+a^3\rho^3
 \right]\\
 &+\left(|P|^2P-
 \sum_k e^{-ib|y-Dk|^2/4}f_a(y-Dk)^3\right).
 \end{split}
\end{equation}
To check the coefficient of the quadratic term, before use of
\eqref{prof:rho} it is \(b_s-2bh-b^2=a+\nu\).
The term \(a|y|^2Q/4\) cancels the linear contribution of
the profile deformation.  Thus the quadratic correction left in
\eqref{mod:exact-residual} has the stated sign.

The derivative of the exact mass correction is
\begin{equation}\label{mod:as}
 a_s=Db\,a_D+(a-b^2)a_b+(-Da_D+2ba_b)\mu+a_b\nu.
\end{equation}
Consequently the residual has the exact affine form
\begin{equation}\label{mod:affine-residual}
 \mathcal E=\mathcal E_0+\mu R_\mu+\omega R_\omega+\nu R_\nu,
\end{equation}
where
\begin{equation}\label{mod:residual-directions}
 \begin{split}
 R_\mu&=-iZ_{D,b}\Lambda f_a
           +i(-Da_D+2ba_b)Z_{D,b}\rho,\\
 R_\omega&=-P,\qquad
 R_\nu=Z_{D,b}(|y|^2f_a/4)+ia_bZ_{D,b}\rho.
 \end{split}
\end{equation}
Here \(\mathcal E_0\) is obtained from
\eqref{mod:exact-residual} by setting the three defects to zero
and replacing \(a_s\) by \(Db\,a_D+(a-b^2)a_b\).
Lemmas~\ref{prof:mass} and \ref{prof:products} give
\begin{equation}\label{mod:E0}
 \|\mathcal E_0\|_{H^k}\le C_kR(D),\qquad
 \|\partial_b\mathcal E_0\|_{H^k}\le C_kD^{p_k}e^{-D}.
\end{equation}
Indeed \(a_s\) in this expression is bounded by a polynomial
times \(e^{-3D/2}\), and the terms quadratic in \(a\) are
bounded by a polynomial times \(e^{-2D}\).
Only the mixed cubic sum has size \(R(D)\).
The profiles in \eqref{mod:residual-directions} have uniformly
bounded fixed Sobolev norms.

We turn to the differentiated orthogonalities.  To avoid confusion
with the physical vectors \(g_j\), denote the three Euclidean
test functions by
\[
 \psi_1=|y|^2Q,\qquad \psi_2=i\rho,\qquad
 \psi_3=i\Lambda Q,\qquad T_j=Z_{D,b}\psi_j.
\]
Their physical transports \(t_j=I_DT_j\) satisfy
\(\langle w,t_j\rangle=0\).  Direct differentiation at the
fixed centers gives
\begin{equation}\label{mod:Kj}
 I_D^{-1}(t_j)_s
 =K_j=-hZ_{D,b}\Lambda\psi_j
       -i(a+b^2+\nu)Z_{D,b}(|y|^2\psi_j/4).
\end{equation}
The cubic Taylor remainder in scaled coordinates is
\[
 \mathcal N_2(P,\varepsilon)
 =\overline P\,\varepsilon^2+2P|\varepsilon|^2+
                                      |\varepsilon|^2\varepsilon.
\]
Subtracting the profile equation from NLS, on the fixed torus,
gives
\begin{equation}\label{mod:w-equation}
 iw_s=\mathcal Lw+\omega w-f-I_D\mathcal N_2(P,\varepsilon).
\end{equation}
Now differentiate \(\langle w,t_j\rangle=0\), use the real
self-adjointness of the Hessian, and only then rescale the
integrals.  The result is the exact identity
\begin{equation}\label{mod:projection-equation}
 \langle\mathcal E,iT_j\rangle
 =\langle\varepsilon,\mathcal L_P(iT_j)+K_j+\omega iT_j\rangle
                 -\langle\mathcal N_2(P,\varepsilon),iT_j\rangle.
\end{equation}
All integrations by parts here are on the fixed periodic torus.

We spell out the cancellation in its rows.  The defect-independent
part of \(K_j\) is
\[
 K_j^0=bZ_{D,b}\Lambda\psi_j
       -i(a+b^2)Z_{D,b}(|y|^2\psi_j/4).
\]
For a single image and any localized complex \(z\),
\[
 \mathcal L_{e^{-ib|y|^2/4}f_a}
       (e^{-ib|y|^2/4}z)
 =e^{-ib|y|^2/4}\left[
 (-\Delta+1-2f_a^2)z-f_a^2\overline z
 +ib\Lambda z+\frac{b^2|y|^2}{4}z\right].
\]
Taking \(z=i\psi_j\), the term \(ib\Lambda(i\psi_j)\)
cancels the first term of \(K_j^0\).  The two \(b^2\) terms
also cancel.  Therefore, with
\[
 C_1=-4T_3,\qquad C_2=-T_1/4,\qquad C_3=2Z_{D,b}Q,
\]
the actual periodic row errors satisfy
\begin{equation}\label{mod:row-cancellation}
 W_j:=\mathcal L_P(iT_j)+K_j^0-C_j,\qquad
 \|W_j\|_2+\|\partial_bW_j\|_2\le C D^p e^{-D}.
\end{equation}
For completeness, after the cancellation the single-image
expression is
\[
 Z_{D,b}\left[
 (-\Delta+1-2f_a^2)(i\psi_j)
 -f_a^2\overline{i\psi_j}
 -\frac{ia|y|^2}{4}\psi_j\right].
\]
At \(a=0\), \eqref{prof:kernel} and \eqref{prof:rho}
give \(C_j\) exactly.  Replacing \(Q\) by \(f_a\) costs
\(O(a)\); the remaining periodic terms are products with
distinct centers and are controlled by
Lemma~\ref{prof:products}.  Their fixed \(b\)-derivatives have
the same polynomial-exponential bound.  This proves
\eqref{mod:row-cancellation}.

We can now solve the finite system explicitly.  In
\(K_j+\omega iT_j=K_j^0+\mu K_j^\mu+
\omega K_j^\omega+\nu K_j^\nu\), the coefficients are
\[
 K_j^\mu=-Z_{D,b}\Lambda\psi_j,\quad
 K_j^\omega=iT_j,\quad
 K_j^\nu=-iZ_{D,b}(|y|^2\psi_j/4).
\]
Set
\begin{equation}\label{mod:finite-matrix}
 A_{j\ell}(D,b,\varepsilon)
 =\langle R_\ell,iT_j\rangle
       -\langle\varepsilon,K_j^\ell\rangle,
 \qquad \ell\in\{\mu,\omega,\nu\}.
\end{equation}
At \(\varepsilon=0\), this matrix converges to
\eqref{mod:coordinate-matrix}, so its inverse is uniformly
bounded for small \(N\).  The exact right-hand side is
\begin{equation}\label{mod:finite-numerator}
 \begin{split}
 F_j(D,b,\varepsilon)={}&
 \langle\varepsilon,W_j\rangle
 -\langle\mathcal N_2(P,\varepsilon),iT_j\rangle
 -\langle\mathcal E_0,iT_j\rangle\\
 &-\mathbf1_{j=3}
 \left(\|\varepsilon\|_2^2+
                   2a\langle\varepsilon,Z_{D,b}\rho\rangle\right).
 \end{split}
\end{equation}
Here \(\mathbf1_{j=3}\) is one when \(j=3\) and zero otherwise.
We write \(F=(F_1,F_2,F_3)^{\mathsf T}\) for the right-hand side vector.
The final term is \eqref{mod:mass-identity}; it is essential
because \(C_3=2Z_{D,b}Q\) is not one of the imposed tests.
The terms \(C_1,C_2\) vanish by orthogonality.
Thus \eqref{mod:projection-equation} is exactly
\begin{equation}\label{mod:smooth-map}
 A(D,b,\varepsilon)(\mu,\omega,\nu)^{\mathsf T}
       =(F_1,F_2,F_3)^{\mathsf T}.
\end{equation}

Uniform \(H^1\)-to-\(L^4,L^6\) embeddings give
\[
 |\langle\mathcal N_2(P,\varepsilon),iT_j\rangle|
 \le C(N^2+N^3).
\]
Equations \eqref{mod:E0}, \eqref{mod:row-cancellation}, and
\eqref{mod:finite-numerator} actually give the stronger bound
\[
 |\mu|+|\omega|+|\nu|
 \le C\bigl(R(D)+D^p e^{-D}N+N^2\bigr).
\]
In particular \eqref{mod:estimate} follows.  Substitution in
\eqref{mod:affine-residual} proves
\eqref{mod:full-residual}.

Finally, \eqref{mod:finite-matrix} and
\eqref{mod:finite-numerator} define smooth maps on an \(H^1\)
neighborhood, even without imposing the constraints.  Their
restriction to the constraint surface is the vector obtained
from the PDE.  At \(\varepsilon=0\), the numerator consists
only of the pairings of \(-\mathcal E_0\) with \(iT_j\).
Differentiating these unfolded scalar pairings and the uniformly
invertible matrix gives the stated bounds for \(\mathbf m^0\).
The derivative estimates of Lemmas~\ref{prof:mass} and
\ref{prof:products} justify every such fixed differentiation.
\end{proof}

The same calculation holds for \(H^1\) solutions on their local
lifespan by approximation: their time derivative lies in \(H^{-1}\),
whereas all physical tests are smooth.  For the construction below
we first use smooth terminal solutions, so every differentiated
identity is classical.  The estimates then pass to the limiting
solution by the compactness argument.

We shall need one consequence of the explicit finite system in the
uniqueness argument.  It is useful to record it now, while the
terms in that system are visible.

\begin{corollary}[Modulation derivatives]\label{mod:lipschitz}
On the constraint surface let
\(\mathbf m(D,b,\varepsilon)=(\mu,\omega,\nu)\).
For \(\|\varepsilon\|_{H^1}\le N_*\ll1\) and
\(b\in[B_0/4,4B_0]\), its smooth extension given by
\eqref{mod:smooth-map} satisfies
\begin{equation}\label{mod:map-derivatives}
 \|D_\varepsilon\mathbf m\|
       \le C(B_0+N_*),\qquad
 |\partial_b\mathbf m|\le C(D^p e^{-D}+N_*).
\end{equation}
Here \(D_\varepsilon\mathbf m\) is the real Fr\'echet derivative,
and its norm is the operator norm from reflection-even
\(H^1(\mathbb T_D^2)\) to \(\mathbb R^3\), with the Euclidean norm
on \(\mathbb R^3\). It is taken on the stated smooth extension
with \(D,b\) fixed; in \(\partial_b\mathbf m\), \(D,\varepsilon\)
are held fixed.
Consequently
\[
 |\mathbf m(D,b_1,\varepsilon_1)-\mathbf m(D,b_2,\varepsilon_2)|
 \le C(B_0+N_*)\|\varepsilon_1-\varepsilon_2\|_{H^1}
       +C(D^p e^{-D}+N_*)|b_1-b_2|.
\]
Also
\[
 |\mathbf m(D,b,\varepsilon)-\mathbf m^0(D,b)|
       \le C(B_0N+N^2),\qquad N=\|\varepsilon\|_{H^1}.
\]
\end{corollary}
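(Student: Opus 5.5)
The plan is to differentiate the explicit finite system \eqref{mod:smooth-map}, $A(D,b,\varepsilon)\mathbf m=F(D,b,\varepsilon)$, with the matrix $A$ from \eqref{mod:finite-matrix} and the numerator $F$ from \eqref{mod:finite-numerator}. Both maps are smooth on an $H^1$ neighbourhood, and $A^{-1}$ is uniformly bounded there, as shown in the proof of Lemma~\ref{mod:modulation}. Differentiating gives
\[
 D\mathbf m=A^{-1}\bigl(DF-(DA)\mathbf m\bigr)
\]
for a derivative in either $\varepsilon$ or $b$. Lemma~\ref{mod:modulation} gives $|\mathbf m|\le C(R+B_0N_*+N_*^2)$, so $|\mathbf m|\le C(B_0+N_*)$. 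It therefore suffices to bound $DF$ and $DA$ separately.

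For the $\varepsilon$-derivative, I treat each piece in turn.
\begin{itemize}
\item In $A$, the only $\varepsilon$-dependence is the linear term $-\langle\varepsilon,K_j^\ell\rangle$. The $K_j^\ell$ have bounded $L^2$ norms, so $\|D_\varepsilon A\|\le C$, and the term $(D_\varepsilon A)\mathbf m$ contributes $C(B_0+N_*)$.
\item In $F$, the linear term $\langle\varepsilon,W_j\rangle$ has derivative of size $\|W_j\|_2\le CD^pe^{-D}\le CB_0$, by \eqref{mod:row-cancellation}.
\item The cubic remainder $\langle\mathcal N_2(P,\varepsilon),iT_j\rangle$ is at least quadratic in $\varepsilon$. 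By the uniform $H^1\to L^4,L^6$ embeddings, its derivative is $O(N_*+N_*^2)$.
\item $\mathcal E_0$ does not depend on $\varepsilon$.
\item The mass term in row $3$ has derivative $O(N_*+|a|)$, and $|a|\le CD^{1/2}e^{-D}\le CB_0$.
\end{itemize}
This yields $\|D_\varepsilon\mathbf m\|\le C(B_0+N_*)$.

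For the $b$-derivative at fixed $(D,\varepsilon)$, I estimate each term similarly.
\begin{itemize}
\item $\partial_b\langle\mathcal E_0,iT_j\rangle$ is $O(D^pe^{-D})$. This uses \eqref{mod:E0} together with the fact that $\partial_bT_j$ only inserts the bounded weight $|y|^2$ on localized profiles.
\item $\partial_b\langle\varepsilon,W_j\rangle$ is $O(D^pe^{-D}N_*)$, by \eqref{mod:row-cancellation}.
\item The $b$-derivatives of the $\mathcal N_2$ pairing and of the row-$3$ mass term are $O(N_*^2)$ and $O(|\partial_b a|N_*)$. Both are controlled using Lemma~\ref{prof:mass}.
\item The entries of $A$ have bounded $b$-derivatives, since $R_\ell$ and $T_j$ depend smoothly on $b$ with uniformly bounded weighted norms. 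Hence $(\partial_bA)\mathbf m$ is bounded by $|\mathbf m|$.
\end{itemize}

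The main obstacle is this last product. A bare bound $|\mathbf m|\le C(B_0+N_*)$ would give $B_0$, not the claimed $D^pe^{-D}$. I would first try the sharper estimate from the proof of Lemma~\ref{mod:modulation}, $|\mathbf m|\le C(R+D^pe^{-D}N+N^2)$, which gives $|(\partial_bA)\mathbf m|\le C(D^pe^{-D}+N_*)$ with no $B_0$ term. If even that is not clean, I would isolate the $b$-dependence of the leading entries of $A$ and check that they pair against the relevant $R_\ell$ with exponentially small $b$-derivative. The chirp conjugation leaves the single-image pairings $b$-independent up to $O(a)$ and cross-image terms.

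The two consequences then follow quickly. The Lipschitz bound comes from the mean value theorem along the segment joining $(b_1,\varepsilon_1)$ to $(b_2,\varepsilon_2)$, which stays in the admissible set since it is convex. For the last bound, integrate $D_\varepsilon\mathbf m$ from $0$ to $\varepsilon$, using the $N$-dependent version of the derivative bound along the ray, namely $C(B_0+\tau N)$ at $\tau\varepsilon$. This gives $C(B_0N+N^2)$.
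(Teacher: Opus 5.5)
Your proposal is correct and is essentially the paper's proof. Like the paper, you differentiate $\mathbf m=A^{-1}F$, bound $DF$ and $(DA)\mathbf m$ term by term, apply the mean value theorem on the convex parameter set, and take $\varepsilon_2=0$ with $N_*=N$ for the last bound. The obstacle you flag in $(\partial_bA)\mathbf m$ does not actually arise: $|\mathbf m|\le C(R+B_0N_*+N_*^2)\le C(D^pe^{-D}+N_*)$ already holds because $B_0N_*\le N_*$, and this is how the paper handles that term. One small omission: the row-$3$ mass term also contributes $2a\langle\varepsilon,\partial_bZ_{D,b}\rho\rangle=O(|a|N_*)$ to $\partial_bF_3$, which is equally harmless.
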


\begin{proof}
The matrix in \eqref{mod:finite-matrix} and its first
\(b,\varepsilon\) derivatives are uniformly bounded.
The derivative in \(\varepsilon\) of the quadratic and cubic
pairings in \eqref{mod:finite-numerator} is
\(O(N_*+N_*^2)\), and their \(b\)-derivative is
\(O(N_*^2+N_*^3)\).
The row and residual bounds
\eqref{mod:row-cancellation}, \eqref{mod:E0} control the other
terms.  Differentiating \(A^{-1}F\) adds a bounded matrix
times \(|\mathbf m|\), which is
\(O(R+B_0N_*+N_*^2)\).
Since any fixed polynomial times \(e^{-D}\) is \(o(B_0)\),
these estimates give \eqref{mod:map-derivatives}.  The mean
value theorem in the smooth extension proves the comparison
bound; setting the second remainder to zero gives the last
assertion.
\end{proof}

The remainder equation, its constraints, and the full forcing are
now explicit.  We next combine them in a positive graph energy.
The identity \(\Pi J=J\Pi^*\) will cancel the principal Hamiltonian
part, leaving only coefficients that vary on the slow modulation
scale.

\subsection{An energy adapted to the moving constraints}

The modulation estimates control the three parameters, but they do not by
themselves control the error over the very long time intervals under
consideration. For this purpose we use the positive Hessian on the
constrained space. Applying ordinary derivatives to the equation would
destroy its cancellation. Powers of the constrained Hamiltonian generator
preserve that cancellation and provide the required higher-order estimates.

All operators in this subsection act on the fixed physical torus. Write
\[
 \mathcal N(z)=|z|^2z,\qquad
 \mathcal L=-\lambda^2\Delta+1-\lambda^2D\mathcal N(V),\qquad
 \mathcal A=\Pi J\mathcal L\Pi,\qquad J=-i.
\]
Here \(D\mathcal N(V)w=2|V|^2w+V^2\overline w\) is real linear,
and \(\Pi\) is the projection constructed in
Lemma~\ref{mod:projection}. We use real \(L^2\) pairings throughout.
Set \(y=\Pi w\), \(n=(I-\Pi)w\), and
\[
 \kappa=|\lambda_s/\lambda|+|b_s|,\qquad
 \mathcal R(V,w)=\mathcal N(V+w)-\mathcal N(V)-D\mathcal N(V)w .
\]
If \(U=e^{i\gamma}V\), define the residual in physical coordinates by
\[
 f=\lambda^2e^{-i\gamma}
       (iU_t+\Delta U+\mathcal N(U)).
\]
The estimates of the preceding subsection give, for every fixed integer
\(r\ge0\),
\begin{equation}\label{con:residual-bound}
 \|f\|_{r,\lambda}\le C_r(R+|\mu|+|\nu|+|\omega|).
\end{equation}

For a scalar or matrix-valued coefficient \(A(x)\), its scaled
\(W^{r,\infty}\) norm means
\[
 \|A\|_{W^{r,\infty}_\lambda}
   =\sum_{|\beta|\le r}\lambda^{|\beta|}
                         \|\partial_x^\beta A\|_{L^\infty(\mathbb T_L^2)}.
\]
For a real-linear coefficient, this convention is applied to its
matrix in real and imaginary parts.

We first record the operator estimates used below. Their uniformity as
\(\lambda\) tends to zero is important.

\begin{lemma}\label{con:graph}
For every fixed integer \(r\ge0\),
\begin{align}
 \|\mathcal A q\|_{r,\lambda}
   &\le C_r\|q\|_{r+2,\lambda},&
 \|\mathcal A_s q\|_{r,\lambda}
   &\le C_r\kappa\|q\|_{r+2,\lambda},\label{con:operator-bounds}\\
 |\langle\mathcal L_s q,q\rangle|
   &\le C\kappa\|q\|_{1,\lambda}^2.&\notag
\end{align}
If \(q\) belongs to \(W=\operatorname{Ran}\Pi\), then
\begin{equation}\label{con:elliptic}
 \|q\|_{r+2,\lambda}
 \le C_r\bigl(\|\mathcal A q\|_{r,\lambda}+\|q\|_2\bigr).
\end{equation}
Consequently, for \(j\ge1\),
\begin{equation}\label{con:graph-energy}
 X_j^2=\sum_{a=0}^j
       \langle\mathcal L\mathcal A^a y,\mathcal A^a y\rangle
 \asymp_j\|y\|_{2j+1,\lambda}^2
 \asymp_j\|w\|_{2j+1,\lambda}^2
\end{equation}
whenever the scaled \(H^1\) error is sufficiently small and the exact
mass and modulation constraints hold.
\end{lemma}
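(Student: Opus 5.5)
The plan is to work entirely on the fixed physical torus in the scaled norms \(\|\cdot\|_{k,\lambda}\). Pulled back by \(I_D\), each coefficient becomes a periodic function on \(\mathbb T_D^2\) with uniformly bounded derivatives. This makes every estimate uniform in \(\lambda\).

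\textbf{Operator bounds.} Since \(\lambda^2\Delta\) has scaled order two, \(\|\mathcal Lq\|_{r,\lambda}\le C_r\|q\|_{r+2,\lambda}\). The same holds for \(J\). The potential \(\lambda^2D\mathcal N(V)\) has bounded scaled \(W^{r,\infty}_\lambda\) norm, because \(\lambda^2|V|^2=|P|^2\circ(\cdot/\lambda)\) and \(P\) is bounded in every \(C^k\). For \(\Pi\) we use Lemma~\ref{mod:projection}: it is \(I\) minus a finite-rank map whose vectors have bounded scaled norms. Together these give the first bound in \eqref{con:operator-bounds}.

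\textbf{Time derivatives.} For \(\mathcal A_s\) we differentiate \(\mathcal A=\Pi J\mathcal L\Pi\) by the product rule. The factor \(\mathcal L_s=-2h\lambda^2\Delta-\partial_s(\lambda^2D\mathcal N(V))\) has two parts. The first is bounded by \(\kappa\) times an order-two operator. The second is \(\kappa\) times a coefficient with bounded scaled \(W^{r,\infty}\) norm: differentiating \(P_{D,b}\) at fixed physical centres inserts polynomial weights in \((x-Lk)/\lambda\) against exponentially decaying profiles, as in Lemma~\ref{mod:coordinates}, and \(D_s=-Dh\). The factors \(\Pi_s\) are controlled by \eqref{mod:projection-derivative}. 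The quadratic form bound \(|\langle\mathcal L_sq,q\rangle|\le C\kappa\|q\|_{1,\lambda}^2\) follows in the same way, integrating the \(\Delta\) term by parts.

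\textbf{Elliptic estimate \eqref{con:elliptic}.} For \(q\in W\) we have \(q=\Pi q\), so \(\mathcal Aq=\Pi J\mathcal Lq\). Write
\[
 J\mathcal Lq=\Pi J\mathcal Lq+(I-\Pi)J\mathcal Lq .
\]
The second term is finite rank: it is a combination of the \(Jg_j\) with coefficients \(\langle J\mathcal Lq,\cdot\rangle\). Moving \(\mathcal L\) onto the smooth dual vectors bounds these coefficients by \(C\|q\|_2\), with all scaled norms bounded. Hence
\[
 \|\mathcal Lq\|_{r,\lambda}\le C(\|\mathcal Aq\|_{r,\lambda}+\|q\|_2).
\]
Next, \(-\lambda^2\Delta+1\) is exactly invertible with gain two in the scaled norms, by Fourier multipliers, so
\[
 \|q\|_{r+2,\lambda}\le\|\mathcal Lq\|_{r,\lambda}+C\|q\|_{r,\lambda}.
\]
Interpolation \(\|q\|_{r,\lambda}\le\epsilon\|q\|_{r+2,\lambda}+C_\epsilon\|q\|_2\), which is uniform in \(\lambda\) for the multiplier definition, then absorbs the lower-order term.

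\textbf{Graph energies \eqref{con:graph-energy}.} Note that \(\mathcal A^ay\in W\) for every \(a\), since \(\operatorname{Ran}\mathcal A\subset W\). The coercivity of Lemma~\ref{mod:projection} on \(W\) therefore gives
\[
 X_j^2\asymp\sum_{a\le j}\|\mathcal A^ay\|_{1,\lambda}^2 .
\]
The upper bound follows from iterating the first estimate in \eqref{con:operator-bounds}. The lower bound follows by iterating \eqref{con:elliptic}:
\[
 \|y\|_{2j+1,\lambda}\lesssim\|\mathcal Ay\|_{2j-1,\lambda}+\|y\|_2\lesssim\cdots\lesssim\sum_a\|\mathcal A^ay\|_{1,\lambda}.
\]
Finally, \(\|y\|\asymp\|w\|\) at each order by \eqref{mod:normal-part}, whose estimate \(\|n\|\le C\|w\|_2^2\) is absorbed for small \(H^1\) error, together with the boundedness of \(\Pi\).

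\textbf{Main obstacle.} The delicate point is \eqref{con:elliptic}: one must check that the cross terms from \(I-\Pi\) and from image interactions cost only \(\|q\|_2\), uniformly in \(\lambda\), and not higher norms. The plan is to put every derivative onto the smooth finite-rank vectors.
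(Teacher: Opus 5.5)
Your proposal is correct and follows essentially the same route as the paper. Both arguments use four ingredients:
\begin{itemize}
\item the uniform scaled elliptic estimate for the multiplier $-\lambda^2\Delta+1$, with interpolation absorbing the bounded potential $\lambda^2D\mathcal N(V)$;
\item the bound of the finite-rank piece $(I-\Pi)J\mathcal Lq$ by $C_r\|q\|_2$, obtained by moving $\mathcal L$ onto the smooth dual vectors;
\item control of $\mathcal A_s$ and $\mathcal L_s$ by differentiating the explicit profiles and projection at fixed lattice centres;
\item for the graph energies, coercivity on $W$, iteration of the elliptic estimate, and the quadratic normal bound $\|n\|_{r,\lambda}\le C_r\|w\|_2^2$.
\end{itemize}

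The only difference is organizational. You first isolate $\|\mathcal Lq\|_{r,\lambda}\le\|\mathcal Aq\|_{r,\lambda}+C\|q\|_2$ and then invert the multiplier. Since the derivative-defined scaled norm is only equivalent to the multiplier norm, this step needs a harmless constant in front of $\|\mathcal Lq\|_{r,\lambda}$.
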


\begin{proof}
The Fourier multiplier \(-\lambda^2\Delta+1\) satisfies the usual elliptic
estimate in the scaled norms, with a constant independent of \(\lambda\).
Multiplication by \(\lambda^2D\mathcal N(V)\) is uniformly bounded in
every fixed scaled \(W^{r,\infty}\) norm. Interpolation absorbs its
highest-order contribution into the elliptic estimate, leaving a multiple
of \(\|q\|_2\). Moreover,
\((I-\Pi)J\mathcal Lq\) is a finite sum of smooth functions multiplied
by pairings against \(J\mathcal Lq\). Moving the derivatives in those
pairings onto the smooth dual vectors shows that its scaled \(H^r\)
norm is bounded by \(C_r\|q\|_2\). This proves
\eqref{con:elliptic}.

The derivative of the principal coefficient is
\(-2(\lambda_s/\lambda)\lambda^2\Delta\). All derivatives of the
potential and of the finite-rank projection are bounded by \(\kappa\)
times the corresponding scaled coefficient bounds. This follows by
differentiating the localized profiles at fixed lattice centres.
In particular, the calculation never differentiates an unknown periodic
function by a nonperiodic dilation field. These observations prove
\eqref{con:operator-bounds}.

Each \(\mathcal A^a y\) lies in \(W\). Coercivity on \(W\), followed
by repeated use of \eqref{con:elliptic}, proves the first equivalence
in \eqref{con:graph-energy}. The reverse estimate follows from
\eqref{con:operator-bounds}. Finally, the exact mass constraint and the
three linear orthogonality conditions imply
\begin{equation}\label{con:normal}
 \|n\|_{r,\lambda}\le C_r\|w\|_2^2 .
\end{equation}
At order zero one first absorbs the quadratic term in
\(\|w\|_2\le\|y\|_2+C\|w\|_2^2\). Substitution back into
\eqref{con:normal} then gives equivalence of the higher norms.
Only the order-zero smallness is needed; there is no regularity-dependent
smallness condition on \(w\).
\end{proof}

We now differentiate these energies. The following identity keeps track of
the change of the constrained space, which would otherwise introduce
uncontrolled terms.

\begin{lemma}\label{con:energy-evolution}
The projected error satisfies
\begin{equation}\label{con:projected}
 y_s=\mathcal A y+\Pi_s y+g,\qquad
 g=\Pi_s n+\Pi J\mathcal Ln+\omega\Pi Jw
          -\lambda^2\Pi J\mathcal R(V,w)-\Pi Jf .
\end{equation}
In particular \(g\in W\). On any interval where the scaled \(H^3\)
error is small,
\begin{equation}\label{con:low-energy}
 |(X_1)_s|\le
 C(\kappa+|\omega|)X_1+CX_1^2+C\|f\|_{3,\lambda}.
\end{equation}
For every fixed \(j\ge1\) one also has the tame estimate
\begin{equation}\label{con:high-energy}
 |(X_j)_s|\le
 C_j(\kappa+|\omega|+\|w\|_{3,\lambda})X_j
       +C_j\|f\|_{2j+1,\lambda}.
\end{equation}
The square-root inequalities are understood in their integral form at
zeros of \(X_j\).
\end{lemma}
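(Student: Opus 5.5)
First I derive \eqref{con:projected}. In renormalized time the equation \eqref{mod:w-equation} becomes
\[
 w_s=J\mathcal L w+\omega Jw-Jf-\lambda^2J\mathcal R(V,w),
\]
where I use \(J=-i\) and the scaled cubic remainder. The physical form of \(\mathcal N_2\) is \(\lambda^2\mathcal R\). I then write \(y=\Pi w\) and differentiate:
\[
 y_s=\Pi_sw+\Pi w_s=\Pi_s y+\Pi_s n+\Pi J\mathcal L(y+n)+\omega\Pi Jw-\lambda^2\Pi J\mathcal R-\Pi Jf .
\]
Since \(y=\Pi y\), the term \(\Pi J\mathcal L y\) equals \(\mathcal A y\), and this gives \eqref{con:projected}. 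The claim \(g\in W\) needs a short check. Every term of \(g\) except \(\Pi_s n\) carries a leading \(\Pi\), so it lies in \(W\). For \(\Pi_s n\), differentiating \(\Pi^2=\Pi\) gives \(\Pi_s=\Pi_s\Pi+\Pi\Pi_s\), and therefore \(\Pi_s(I-\Pi)=\Pi\Pi_s\). Hence \(\Pi_s n=\Pi\Pi_s n\in W\). I record a further fact for later use. The term \(\Pi_s y\) is not in \(W\), but \(\Pi\Pi_s y=\Pi\Pi_s\Pi y=0\) by \eqref{mod:projection-identities}, so \(\Pi_s y\in S\).

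Next I differentiate each graph term \(E_a=\langle\mathcal L\mathcal A^a y,\mathcal A^a y\rangle\). Put \(y_a=\mathcal A^a y\in W\). By self-adjointness of \(\mathcal L\),
\[
 \partial_sE_a=\langle\mathcal L_s y_a,y_a\rangle+2\langle\mathcal L y_a,(y_a)_s\rangle,
\]
and I expand
\[
 (y_a)_s=\sum_{b<a}\mathcal A^{b}\mathcal A_s\mathcal A^{a-1-b}y+\mathcal A^a(\mathcal A y+\Pi_sy+g).
\]
The principal cancellation comes first. Since \(y_a\in W\), we have \(\mathcal L y_a=\mathcal L\Pi y_a\), and
\[
 \langle\mathcal L y_a,\Pi J\mathcal L\Pi y_a\rangle=\langle \Pi^*\mathcal L y_a,J\,\Pi^*\mathcal L y_a\rangle=0 ,
\]
using \(\Pi J=J\Pi^*\) and the skew-adjointness of \(J\). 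So the term \(\mathcal A^{a+1}y\) contributes nothing.

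The second term to handle is \(\mathcal A^a\Pi_s y\). For \(a=0\) it is \(\Pi_sy\in S\). By \eqref{mod:projection-derivative} its norm is \(O(\kappa\|y\|_2)\), and pairing it with \(\mathcal L y\) gives \(O(\kappa X_1^2)\). For \(a\ge1\), \(\mathcal A\) kills nothing exactly on \(S\). But \(\mathcal A\Pi_s y=\Pi J\mathcal L\Pi\Pi_s y\), and \(\Pi\Pi_sy=0\), so this vanishes identically. In fact \(\mathcal A=\Pi J\mathcal L\Pi\) annihilates \(S\). This is the point where the change of the constraint space is harmless; only the \(a=0\) term survives.

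The commutator terms involving \(\mathcal A_s\) are bounded using \eqref{con:operator-bounds}. Each such term is \(\le C\kappa\|y\|_{2a+1,\lambda}\|y\|_{2a+1,\lambda}\) after moving the excess derivatives, which is legitimate because \(\mathcal L\mathcal A^b\) is of order \(2b+2\). The term \(\langle\mathcal L_s y_a,y_a\rangle\) is \(\le C\kappa\|y_a\|_{1,\lambda}^2\). Both are therefore \(\le C_j\kappa X_j^2\) by \eqref{con:graph-energy}.

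Finally come the pieces of \(g\), each estimated in \(\|\cdot\|_{2a+1,\lambda}\) and paired with \(\mathcal L y_a\) in the \(H^{-1}\)–\(H^1\) sense.
\begin{itemize}
 \item \(\Pi_s n\): its norm is \(\kappa\|n\|_2\le\kappa\|w\|_2^2\) by \eqref{mod:projection-derivative} and \eqref{con:normal}.
 \item \(\Pi J\mathcal L n\): by \eqref{mod:normal-invariance} its norm is \(\le B_0\|n\|_2\le B_0\|w\|_2^2\). I absorb this into \(\|w\|_{3,\lambda}X_j\) (or \(X_1^2\)), using \(\|w\|_2\lesssim X_1\).
 \item \(\omega\Pi Jw\): this gives \(|\omega|X_j\).
 \item \(\Pi Jf\): this gives \(\|f\|_{2j+1,\lambda}\).
 \item \(\lambda^2\mathcal R(V,w)\): a Moser estimate in the scaled norms, whose constants are uniform in \(\lambda\) via the unit-square cover, gives \(\|\lambda^2\mathcal R\|_{2a+1,\lambda}\le C(\|w\|_{L^\infty}+\|w\|_{L^\infty}^2)\|w\|_{2a+1,\lambda}\). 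The scaled \(H^3\) norm controls the scaled \(L^\infty\) norm, since \(\lambda^2\mathcal R\) rescales to the unit-scale quantity. This gives \(C\|w\|_{3,\lambda}X_j\), and for \(j=1\) it gives \(CX_1^2\).
\end{itemize}
Summing over \(a\) yields \(|\partial_s X_j^2|\le C(\kappa+|\omega|+\|w\|_{3,\lambda})X_j^2+C\|f\|X_j\). Dividing by \(X_j\), in integral form at zeros of \(X_j\), gives \eqref{con:high-energy}; the \(j=1\) case is \eqref{con:low-energy}.

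I expect the main obstacle to be the top-order pairing \(\langle\mathcal L y_a,\mathcal A^a g\rangle\). A naive count puts \(g\) two derivatives too high. To avoid this, I would write \(\langle\mathcal L y_a,\mathcal A^a g\rangle=\langle(\mathcal A^*)^a\mathcal L y_a,g\rangle\) and use \(\mathcal A^*\mathcal L=\mathcal L\Pi\cdot(-J)\mathcal L\Pi\) up to \(\Pi\)-terms. This is the same adjoint identity as in the cancellation step, and it converts the pairing to \(\langle \mathcal L y_{a'},\mathcal A^{a''}g\rangle\) with balanced orders, \(a'+a''=2a\), \(a',a''\le a\). That step, together with the nonlinear Moser bound in \(\lambda\)-uniform form, is where I would spend the care.
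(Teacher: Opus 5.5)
Your argument is correct and is essentially the paper's proof. It uses the same projected equation and range check, the cancellation $\langle\mathcal L y_a,\mathcal A y_a\rangle=0$ from $\Pi J=J\Pi^*$, the identity $\Pi\Pi_s\Pi=0$ to dispose of the moving-constraint term, and an $H^{-1}$--$H^1$ pairing of $\mathcal L y_a$ against terms controlled by $\|g\|_{2a+1,\lambda}$; the paper simply packages your product-rule expansion of $\partial_s\mathcal A^a$ as the recursion $g_{a+1}=\Pi\mathcal A_s z_a+\mathcal A g_a$.

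The ``obstacle'' in your last paragraph is not real. By \eqref{con:operator-bounds}, $\|\mathcal A^a g\|_{1,\lambda}\le C\|g\|_{2a+1,\lambda}$, and $g$ is tame at order $2j+1$, exactly as your bullet list already shows. The proposed adjoint reshuffling is also circular: $\mathcal A$ is skew for the $\mathcal L$-form on $W$, so it only moves powers of $\mathcal A$ onto $y$, which demands more regularity of $y$ rather than less. Drop that paragraph.
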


\begin{proof}
Substitution of \(u=e^{i\gamma}(V+w)\) into the equation gives
\[
 w_s=J\mathcal Lw+\omega Jw
                 -\lambda^2J\mathcal R(V,w)-Jf.
\]
Applying \(\Pi\) proves \eqref{con:projected}. To check its range,
differentiate \(\Pi^2=\Pi\); the identity
\(\Pi_s(I-\Pi)=\Pi\Pi_s(I-\Pi)\) takes care of its first term.

In the following recursion, \(a\) is a nonnegative integer
index, \(\mathcal A^a\) is repeated composition at the current
time, and \(\mathcal A^0=I\). The symbols \(g_a\) denote the
forcing terms at these graph levels.
Put \(z_a=\mathcal A^a y\). For every \(a\ge0\),
\begin{equation}\label{con:recursion}
 (z_a)_s=\mathcal A z_a+\Pi_s z_a+g_a,\qquad
 g_0=g,\qquad
 g_{a+1}=\Pi\mathcal A_s z_a+\mathcal A g_a .
\end{equation}
Indeed \(\mathcal A\Pi_s z_a=0\), because
\(\Pi\Pi_s\Pi=0\). Differentiating \(\Pi\mathcal A=\mathcal A\)
also gives
\((I-\Pi)\mathcal A_s z_a=\Pi_s z_{a+1}\).
These two identities prove the recursion. Expansion of its right-hand
side yields
\[
 \|g_a\|_{1,\lambda}
 \le C_a\kappa\|y\|_{2a+1,\lambda}
                  +C_a\|g\|_{2a+1,\lambda}.
\]
Only one time derivative of an operator appears in each term, and that
operator has spatial order two. Thus no additional derivatives of the
unknown error are needed.

For \(z\in W\), the identity \(\Pi J=J\Pi^*\) implies
\[
 \langle\mathcal Lz,\mathcal A z\rangle
       =\langle\mathcal Lz,\Pi J\mathcal Lz\rangle=0.
\]
Differentiating \eqref{con:graph-energy}, using
\eqref{con:recursion}, and applying the form estimate
\(|\langle\mathcal Lq,r\rangle|
 \le C\|q\|_{1,\lambda}\|r\|_{1,\lambda}\), we obtain
\begin{equation}\label{con:form-derivative}
 |(X_j^2)_s|\le
 C_j\kappa X_j^2+C_jX_j\|g\|_{2j+1,\lambda}.
\end{equation}
The pairings here are quadratic-form pairings on \(H^1\).
For the initially smooth solutions the differentiations are classical;
the displayed bound subsequently uses only \(2j+1\) derivatives.

After rescaling, the nonlinear remainder consists of
\(2P|\varepsilon|^2+\overline P\varepsilon^2+
|\varepsilon|^2\varepsilon\).
Uniform Sobolev and product inequalities on the expanding tori give
\[
 \|\lambda^2\mathcal R(V,w)\|_{3,\lambda}
 \le C(\|w\|_{3,\lambda}^2+\|w\|_{3,\lambda}^3)
\]
and, for every fixed \(r\ge3\),
\[
 \|\lambda^2\mathcal R(V,w)\|_{r,\lambda}
 \le C_r(\|w\|_{3,\lambda}+\|w\|_{3,\lambda}^2)
                    \|w\|_{r,\lambda}.
\]
For example, apply the tame product estimate to
\(\varepsilon^2\) and \(|\varepsilon|^2\varepsilon\), and then
multiply by \(P\), whose fixed-order coefficient norms are bounded.
The constants are uniform by localization to unit squares with bounded
overlap. The \(L^2\) part of the inhomogeneous norm controls the low
frequencies.

The normal terms in \(g\) obey the same tame estimate by
\eqref{con:normal}. The term \(\mathcal Ln\) uses that estimate at
order \(r+2\), which is legitimate because \(n\) is finite rank.
The phase term is bounded by \(C_r|\omega|\|w\|_{r,\lambda}\).
Combining these bounds with \eqref{con:form-derivative} proves both
claims, using a regularized square root if necessary.
\end{proof}

\subsection{A uniform family with a prescribed terminal scale}

We can now construct exact solutions. Fix a large terminal scale \(D_n\)
and set \(\lambda_n=L/D_n\). At a temporary terminal time prescribe
\begin{equation}\label{con:terminal}
 u_n^{\mathrm{term}}=V_{D_n,B(D_n)} .
\end{equation}
By the choice of \(B\), this datum has exactly mass \(m\) and energy
zero. Its error is zero. It is smooth and invariant under the square
symmetries. Local existence follows from the elementary contraction
argument in \(H^3(\mathbb T_L^2)\): the free Schr\"odinger group is
unitary and the cubic map is locally Lipschitz on this algebra.
Smoothness persists as long as the \(H^3\) solution exists, by the tame
product estimate.

The time coordinate will be fixed only after the solution has reached a
common scale. This avoids comparing the large additive travel times of
different terminal solutions.

On an interval where the scale increases, \(t_n(D)\) denotes
the time at which the \(n\)-th solution reaches scale \(D\).
An orbit quantity with argument \(D\), such as \(w_n(D)\) or
\(b_n(D)\), is composed with this inverse scale function.
Its subscript \(D\) denotes the total derivative in that
parametrization. When considering one member of the family,
we omit the index \(n\).

\begin{proposition}\label{con:terminal-family}
There are \(D_0\) and \(K\), independent of \(D_n\), such that the
solution from \eqref{con:terminal} can be continued backwards to the
section \(D=D_0\). Throughout this interval,
\begin{equation}\label{con:uniform-low}
 \|w_n(D)\|_{3,\lambda}\le K\frac{B(D)}D,\qquad
 \frac{b_n(D)}{B(D)}=1+O(D^{-2}),\qquad
 D_s\asymp DB(D)>0.
\end{equation}
The constants in the middle estimate are independent of \(n\).
\end{proposition}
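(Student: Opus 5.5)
The plan is a backward bootstrap in the scale variable, starting at the terminal section \(D=D_n\), where \(w_n=0\) and \(b_n=B(D_n)\). Fix \(K\), to be chosen as twice a universal source constant, and let \(D_*\in[D_0,D_n]\) be the smallest scale such that on \([D_*,D_n]\) three conditions hold: the solution exists, it admits the decomposition of Lemma~\ref{mod:coordinates}, and
\[
 \|w\|_{3,\lambda}\le 2K\frac{B(D)}{D},\qquad |b-B(D)|\le B(D)/2 .
\]
Smoothness and local existence in \(H^3\) let us continue slightly past any section where these bounds hold strictly. So it suffices to improve them to \(\|w\|_{3,\lambda}\le KB/D\) and \(b/B=1+O(D^{-2})\). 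Under the bootstrap, \(N=\|w\|_{1,\lambda}\le 2KB/D\) is small. Hence Lemma~\ref{mod:modulation} gives
\[
 |\mu|+|\omega|+|\nu|\le C(R+B_0N+N^2),
\]
and every term on the right is \(o(B/D)\) for \(D_0\) large.

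First I will control the quadratic phase and the direction of motion, both without any integration. Since \(u_n\) has mass \(m\) and energy zero, Lemma~\ref{prof:energy-comparison} applies and gives
\[
 |b-B(D)|\le C\,\frac{N^2+RN}{B_0}\le C K^2\frac{B_0}{D^2}+CK\frac{R}{D}.
\]
This yields \(b/B=1+O(K^2D^{-2})\), which improves the \(b\)-bootstrap once \(D_0\) is large; the implied constants are independent of \(n\). Then \eqref{mod:parameter-equations} gives \(D_s=D(b-\mu)\), and since \(\mu=o(B)\) we get \(D_s\asymp DB(D)>0\). Consequently the scale is strictly monotone, the reparametrization by \(D\) is legitimate, and \(ds=dD/(D(b-\mu))\). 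In addition \(\kappa=|h|+|b_s|\le C B\), because \(h=\mu-b\) and \(b_s=a-b^2+2b\mu+\nu\).

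Next comes the main step, the energy estimate for \(X_1\asymp\|w\|_{3,\lambda}\) from Lemma~\ref{con:graph}. I will rewrite \eqref{con:low-energy} in the variable \(D\) by dividing by \(D_s\asymp DB\):
\[
 \Bigl|\frac{dX_1}{dD}\Bigr|\le \frac{C}{D}X_1+\frac{C X_1^2}{DB}+\frac{C\|f\|_{3,\lambda}}{DB}.
\]
Here \(\kappa+|\omega|\le CB\) produces only the harmless factor \(C/D\), which is exactly the polynomial loss announced in the introduction. By \eqref{con:residual-bound} and the modulation bound, \(\|f\|_{3,\lambda}\le C_1R+CB_0X_1+CX_1^2\). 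The \(B_0X_1\) part joins the \(C/D\) coefficient, and the quadratic part is at most \(CK^2B/D^3\cdot X_1\)-sized, hence negligible. Integrating backwards from \(X_1(D_n)=0\) with the integrating factor \((D'/D)^{C}\) gives
\[
 X_1(D)\le C\int_D^{D_n}\Bigl(\frac{D'}{D}\Bigr)^{C}\frac{R(D')}{D'B(D')}\,dD' .
\]
The integrand satisfies \(R/(DB)\asymp D^{-5/4}e^{-D/2}\asymp B(D)/D\). Because of the exponential decay, the polynomial weight \((D'/D)^C\) is dominated near \(D'=D\), so \(X_1(D)\le C_0B(D)/D\) with \(C_0\) independent of \(K\) and \(n\). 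Choosing \(K=2C_0\), and then \(D_0\) large enough to absorb the \(K\)-dependent quadratic terms, closes the bootstrap.

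The main obstacle I expect is this last integration: the constant must be independent of \(D_n\) despite the exponentially long physical time. It hinges on three facts that I will check carefully. First, every linear coefficient (\(\kappa\), \(\omega\), \(\|\Pi_s\|\), and the \(B_0\) normal-invariance term inside \(g\)) is \(O(B)\), so that after division by \(D_s\) it costs only a power of \(D\). Second, the forcing \(R\) has exactly the size that produces the \(B/D\) bound after division by \(DB\). Third, the normal part \(n\) is controlled quadratically by \eqref{con:normal}. Finally, reaching \(D=D_0\) rather than stopping earlier follows because \(D_s>0\) and the bounds hold on the whole bootstrap interval, so the solution can be continued backwards until the section \(D=D_0\).
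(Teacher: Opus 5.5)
Your proposal is correct and follows essentially the same route as the paper: a backward bootstrap from the zero terminal error, with \(b\) controlled pointwise by Lemma~\ref{prof:energy-comparison} and exact zero energy, and \(X_1\) controlled by dividing \eqref{con:low-energy} by \(D_s\asymp DB\), integrating backwards from \(X_1(D_n)=0\) with the tail bound \eqref{con:tail}, and fixing \(K\) before enlarging \(D_0\). One point you leave implicit, which the paper states explicitly, is that \(D_t=D_s/\lambda^2\) has a positive lower bound on \([D_0,D_n]\); this is what makes the backward crossing of \(D_0\) happen at a finite time rather than only asymptotically.
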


\begin{proof}
Starting from the zero terminal error, impose the open bounds
\[
 D\ge D_0,\qquad \tfrac12B(D)<b<2B(D),\qquad
 X_1<K_1B(D)/D.
\]
Choose \(K_1\) first and \(D_0\) afterwards. By graph coercivity and
Lemma~\ref{mod:modulation},
\[
 |\mu|+|\nu|+|\omega|\le CR,\qquad
 D_s=D(b-\mu)\asymp DB,
\]
where the constants can be made independent of \(K_1\):
the contributions \(b\|w\|_{1,\lambda}/R\) and
\(\|w\|_{1,\lambda}^2/R\) are bounded by
\(CK_1/D\) and \(CK_1^2/D^2\), respectively.
The parameter equations and \eqref{con:residual-bound} then give
\(\kappa+|\omega|\le CB\) and
\(\|f\|_{3,\lambda}\le CB^2\).
Dividing \eqref{con:low-energy} by \(D_s\), we find
\[
 |(X_1)_D|\le \frac CD X_1+
        \frac C{DB}X_1^2+C\frac BD,\qquad X_1(D_n)=0.
\]
Backward Gronwall therefore gives
\[
 X_1(D)\le C\int_D^{D_n}
  (r/D)^C\left(\frac{X_1(r)^2}{rB(r)}+\frac{B(r)}r\right)\,dr.
\]
For every fixed \(a\) and \(p\),
\begin{equation}\label{con:tail}
 \int_D^\infty(r/D)^a B(r)r^{-p}\,dr
       \le C_{a,p,D_0}B(D)D^{-p},\qquad D\ge D_0.
\end{equation}
This follows directly from
\(B(r)\asymp r^{-1/4}e^{-r/2}\).
For the present fixed exponent we take \(D_0\) large enough that the
constant is absolute. Substitution of the bootstrap bound yields
\[
 X_1(D)\le C_*\left(1+\frac{K_1^2}{D_0^2}\right)\frac{B(D)}D .
\]
First take \(K_1>4C_*\), and then increase \(D_0\). This improves
the error bound by a factor of two.

Energy conservation improves the branch bound without an additional
shooting parameter. Lemma~\ref{prof:energy-comparison}, applied with
\(E(u_n)=0\), gives
\[
 |b-B(D)|\le
 C\frac{\|w\|_{1,\lambda}^2+R\|w\|_{1,\lambda}}{B(D)}
 \le C_{K_1}B(D)D^{-2}.
\]
Increasing \(D_0\) again keeps \(b\) strictly inside the factor-two
interval. This proves the claimed branch estimate as well.

For a fixed \(n\), all scales lie in the compact interval
\([D_0,D_n]\). The profile bound and the improved error estimate
give a finite physical \(H^3\) bound there. The local continuation
criterion rules out a finite endpoint before the solution reaches
\(D_0\), while the strict improvements rule out a bootstrap exit.
Finally, \(D_t=D_s/\lambda^2\) has a positive lower bound on this
compact scale interval. The backwards crossing occurs in finite time.
Absorbing the graph-norm equivalence into \(K\) proves the proposition.
\end{proof}

The preceding argument fixes \(D_0\) once and for all. Higher derivatives
will change the constants in the estimates, but will not change this
section or the sequence of initial data.

\begin{proposition}\label{con:all-orders}
For every fixed integer \(k\ge0\), the same terminal family satisfies
\begin{equation}\label{con:uniform-high}
 \|w_n(D)\|_{k,\lambda}\le C_k B(D)/D,
          \qquad D_0\le D\le D_n ,
\end{equation}
with a constant independent of \(n\).
\end{proposition}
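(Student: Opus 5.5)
It suffices to treat odd orders \(k=2j+1\) with \(j\ge1\); the case \(j=1\) (that is, \(k\le3\)) is Proposition~\ref{con:terminal-family}, and lower or even orders follow by monotonicity of the scaled norms. We argue by induction on \(j\), using the tame estimate \eqref{con:high-energy} of Lemma~\ref{con:energy-evolution} for the graph energy \(X_j\). The coefficient there involves only \(\kappa+|\omega|+\|w\|_{3,\lambda}\), which Proposition~\ref{con:terminal-family} already controls on the whole interval \([D_0,D_n]\). Consequently no new bootstrap and no enlargement of \(D_0\) are needed: each higher energy satisfies a linear differential inequality with known coefficients and a known source.

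\textbf{The inequality in the scale variable.} On \([D_0,D_n]\), Proposition~\ref{con:terminal-family} and Lemma~\ref{mod:modulation} give \(|\mu|+|\omega|+|\nu|\le CR\), hence \(\kappa+|\omega|\le CB\), together with \(\|w\|_{3,\lambda}\le KB/D\) and \(D_s\asymp DB\). The residual bound \eqref{con:residual-bound} at order \(r=2j+1\) gives \(\|f\|_{2j+1,\lambda}\le C_j(R+|\mu|+|\omega|+|\nu|)\le C_jR\), and \(R\lesssim B^2\). Dividing \eqref{con:high-energy} by \(D_s\) yields
\[
 |(X_j)_D|\le \frac{C_j}{D}X_j+C_j\frac{B(D)}{D},\qquad X_j(D_n)=0,
\]
because the terminal datum \eqref{con:terminal} has zero error. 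Backward Gronwall and the tail bound \eqref{con:tail} give \(X_j(D)\le C_j\int_D^{D_n}(r/D)^{C_j}B(r)r^{-1}\,dr\le C_jB(D)/D\), with \(D_0\) fixed. Here the polynomial loss \((r/D)^{C_j}\) is harmless against the exponential decay of \(B\).

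\textbf{Passing back to \(w\).} The equivalence \eqref{con:graph-energy} in Lemma~\ref{con:graph} requires only smallness at order zero, which is available throughout. It converts the bound on \(X_j\) into \(\|w\|_{2j+1,\lambda}\le C_jB(D)/D\). The solutions are smooth on this interval, since smoothness persists with the \(H^3\) solution, so \(X_j\) is finite and the differentiation is classical. The square-root inequality is used in integral form near zeros of \(X_j\), as stated in Lemma~\ref{con:energy-evolution}.

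\textbf{Main obstacle.} The delicate point is confirming that the constants in \eqref{con:high-energy} and in \eqref{con:residual-bound} at order \(2j+1\) are independent of \(D\) and \(n\). This rests on the uniform scaled elliptic estimate \eqref{con:elliptic}, the tame product bounds on expanding tori, and the finite-rank control of the normal part \(n\) through \eqref{con:normal}. Once these uniform constants are in place, the argument is the same linear Gronwall step carried out at every order.
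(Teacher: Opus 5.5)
Your proposal is correct and follows the paper's proof: once Proposition~\ref{con:terminal-family} controls \(\kappa+|\omega|+\|w\|_{3,\lambda}\), the tame estimate \eqref{con:high-energy} is linear in \(X_j\), dividing by \(D_s\asymp DB\) gives \(|(X_j)_D|\le C_jD^{-1}X_j+C_jB/D\) with \(X_j(D_n)=0\), and backward Gronwall together with \eqref{con:tail} closes the bound at the original fixed \(D_0\). The word ``induction'' is unnecessary, since every order uses only the \(H^3\) bound. For the remaining orders, monotonicity of the scaled norms does the job the paper assigns to interpolation.
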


\begin{proof}
It suffices to prove the claim at every odd order \(2j+1\ge3\).
The low-order bound has already been established, so the tame estimate
\eqref{con:high-energy} is linear in \(X_j\). The explicit residual has
norm at most \(C_j B^2\) at each fixed order. Thus
\[
 |(X_j)_D|\le C_j(D^{-1}+D^{-2})X_j+C_jB/D,
            \qquad X_j(D_n)=0.
\]
The integrating factor is bounded by a fixed power of \(r/D\)
times a constant depending on \(j,D_0\).
Equation~\eqref{con:tail} proves \(X_j(D)\le C_jB(D)/D\).
For large \(j\), that integral still converges at the original \(D_0\):
its polynomial exponent is fixed while its tail is exponential.
There is no need to enlarge \(D_0\) with \(j\).
Lower orders follow by interpolation.
\end{proof}

\subsection{Passage to one smooth orbit and its asymptotic form}

Translate each terminal solution so that its crossing of \(D=D_0\)
occurs at \(t=0\), and multiply it by a constant phase so that
\(\gamma_n(0)=0\). These operations preserve the equation and its
conserved quantities. The exact physical clock is
\begin{equation}\label{con:clock-integral}
 t_n(D)=L^2\int_{D_0}^D
       \frac{dr}{r^3(b_n(r)-\mu_n(r))}.
\end{equation}
The asymptotic expansion of \(B\) and
Proposition~\ref{con:terminal-family} imply, uniformly for \(D\le D_n\),
\begin{equation}\label{con:clock}
 t_n(D)=\sqrt{2/\alpha}\,L^2D^{-11/4}e^{D/2}
                              (1+O(D^{-1})).
\end{equation}
To see the coefficient, the integrand in \eqref{con:clock-integral}
is
\((2\alpha)^{-1/2}r^{-11/4}e^{r/2}(1+O(r^{-1}))\).
One integration by parts gives the factor two. The integral over a
fixed neighbourhood of \(D_0\) is negligible in this expansion.
In particular, the terminal physical times tend to infinity.

At the common section the scale is fixed. Proposition~\ref{con:all-orders}
therefore bounds the same sequence \(u_n(0)\) in every \(H^k\).
Take a subsequence converging strongly in \(H^2\) and weakly in \(H^3\).
Its limit \(u_0\) belongs to every \(H^k\), since the corresponding
higher norms of this sequence are bounded. Interpolation with those
bounds gives strong convergence in each fixed \(H^k\), with the same
limit.

On any fixed physical interval \([0,T]\), formula~\eqref{con:clock}
keeps the scales uniformly away from zero. The physical \(H^3\) norms
are consequently bounded there. Local \(H^2\) well-posedness and
continuous dependence, applied successively on finitely many subintervals,
give
\[
 u_n\longrightarrow u\quad\hbox{in }C([0,T];H^2).
\]
Uniqueness identifies these limits for different \(T\).
The uniform \(H^3\) bound and its continuation criterion give a
positive-time global \(H^3\) solution. The tame higher-order estimates
give \(u\in C([0,\infty);H^k)\) for every \(k\).
In particular there is one smooth datum, rather than a separate
choice of datum for each regularity order.

The parameters converge on compact time intervals. One may either use
their defining implicit equations and strong convergence, or first use
the bounds on their time derivatives and then identify the limit by the
same equations. Their estimates and
\eqref{con:uniform-high} pass to the limit. Strong \(H^2\) convergence
at the common section preserves the mass and energy, so
\(M(u)=m\), \(E(u)=0\).
The square symmetries also pass to \(u_0\) and are preserved by
uniqueness.

It remains to express the conclusion in physical variables. The exact
clock~\eqref{con:clock} implies, with \(\ell=\log(t/L^2)\),
\begin{equation}\label{con:scale-refined}
 D=2\ell+\frac{11}{2}\log\ell+c_*+
             O\!\left(\frac{\log\ell}{\ell}\right),
 \qquad c_*=\frac92\log2+\log\alpha .
\end{equation}
Indeed taking logarithms gives
\(\ell=D/2-(11/4)\log D+\frac12\log(2/\alpha)+O(D^{-1})\);
substitution of \(D=2\ell+O(\log\ell)\) gives
\eqref{con:scale-refined}.

Let
\[
 \mathcal Q_\lambda(x)=\lambda^{-1}
              \sum_{j\in\mathbb Z^2}Q((x-Lj)/\lambda).
\]
For each fixed \(k\), removing the chirp and the mass correction from
\(V\) costs at most \(C_k\lambda^{-k}(|b|+|a|)\).
The physical error is at most
\(C_k\lambda^{-k}B(D)/D\). Both quantities tend to zero exponentially
in \(D\), after multiplication by a fixed power of \(D\). Thus
\begin{equation}\label{con:profile-limit}
 \|u(t)-e^{i\gamma(t)}\mathcal Q_{\lambda(t)}\|_{H^k}
                 \longrightarrow0\qquad(t\to\infty).
\end{equation}

For completeness, use the Euclidean Fourier transform
\(\widehat Q(\xi)=\int e^{-ix\cdot\xi}Q(x)\,dx\).
The torus Fourier coefficient of \(\mathcal Q_\lambda\) at \(n\)
is \(\lambda L^{-2}\widehat Q(2\pi\lambda n/L)\).
Writing \(h=2\pi\lambda/L\), Parseval gives
\[
 \lambda^{2s}\|\mathcal Q_\lambda\|_{\dot H^s(\mathbb T_L^2)}^2
 =\frac{h^2}{(2\pi)^2}
   \sum_{n\in\mathbb Z^2}|hn|^{2s}|\widehat Q(hn)|^2
 \longrightarrow\|Q\|_{\dot H^s(\mathbb R^2)}^2 .
\]
The rapid decay of \(\widehat Q\) controls the tails of the Riemann
sum. For \(s>0\), the inhomogeneous norm has the same leading term.
Together with \eqref{con:profile-limit} and
\eqref{con:scale-refined}, this proves the positive-time assertions
of Theorem~\ref{thm:main}.

There is also a useful phase normalization. Fix any time \(t_*\) in
the modulation interval. Since \(\omega=O(R)\) and
\(D_s\asymp DB\),
\[
 \frac{d}{dD}
 \left(\gamma(t(D))-\int_{t_*}^{t(D)}\lambda(r)^{-2}\,dr\right)
             =\frac{\omega}{D_s}=O(B/D).
\]
Its right-hand side is integrable. Hence for some real \(\gamma_+\),
\begin{equation}\label{con:phase-limit}
 \gamma(t)=\gamma_++\int_{t_*}^t\lambda(r)^{-2}\,dr+O(B(D)/D).
\end{equation}
The scale in this formula is the actual modulation scale.
In particular \(\gamma(t)\sim4tL^{-2}[\log(t/L^2)]^2\), but
this leading relative approximation alone does not determine
the oscillatory factor in \eqref{con:profile-limit}.

\section{Global continuation at nonpositive energy}\label{sec:global}

The construction in Section~\ref{sec:construction} gives a smooth solution
for all positive times. We next prove that this same solution extends
to every finite negative time. The argument is independent of the
construction. Its ingredients are the sharp Euclidean inequality and a
localized moment whose derivative is controlled by the moment itself.
In particular, we do not need a classification of minimal-mass blow-up
solutions on the torus.

We write \(B_r(a)\) for the ball of radius \(r\) about \(a\)
in the flat metric on \(\mathbb T_L^2\). For \(r<L/2\), this ball
is identified with a Euclidean ball; expressions such as \(|x-p|\)
near \(p\) use these flat coordinates.

\subsection{A localized energy inequality}

Recall the sharp inequality
\begin{equation}\label{glob:euclidean-gn}
 \|v\|_{L^4(\mathbb R^2)}^4
 \le\frac2m\|v\|_{L^2(\mathbb R^2)}^2
                  \|\nabla v\|_{L^2(\mathbb R^2)}^2
\end{equation}
from \cite[Corollary~2.1]{Weinstein83}. Its coefficient can be retained on the flat
torus if an appropriate lower-order error is kept.

\begin{lemma}\label{glob:localized}
Fix \(p\in\mathbb T_L^2\). There are smooth real functions
\(\psi_p,W_p\ge0\) such that \(\psi_p\) vanishes only at \(p\),
\(\psi_p(x)\asymp |x-p|^2\) near \(p\),
\[
 |\nabla\psi_p|^2\le C\psi_p,\qquad W_p\le C\psi_p,
\]
and
\begin{equation}\label{glob:energy-local}
 E(v)\ge\frac12\left(1-\frac{M(v)}m\right)\|\nabla v\|_2^2
          -\frac{M(v)}{2m}\int W_p|v|^2
\end{equation}
for every \(v\in H^1(\mathbb T_L^2)\). In particular,
\[
 E(v)\ge-C\int\psi_p|v|^2\qquad\hbox{if }M(v)\le m.
\]
The constants can be chosen uniformly in \(p\).
\end{lemma}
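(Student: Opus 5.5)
The plan is to localize the sharp inequality \eqref{glob:euclidean-gn} with a quadratic partition of unity adapted to the point \(p\). Fix \(r<L/8\). Take smooth real \(\chi,\zeta\) on \(\mathbb T_L^2\) with \(\chi^2+\zeta^2=1\), where \(\chi=1\) on \(B_r(p)\setminus\{\text{nothing}\}\)-type region is replaced by the following choice: let \(\chi\) be supported in \(B_{2r}(p)\) and equal to one on \(B_r(p)\). Set \(\psi_p\asymp|x-p|^2\) near \(p\), positive elsewhere. The difficulty is that \(\zeta\) vanishes near \(p\) but not quadratically, so I will use a family of pieces instead. Cover \(\mathbb T_L^2\setminus\{p\}\)? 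That would break uniform constants. I will therefore use only two pieces and put the whole localization cost into \(W_p\). The requirement \(W_p\le C\psi_p\) only asks that \(W_p\) vanish quadratically at \(p\), and the cost \(|\nabla\chi|^2+|\nabla\zeta|^2\) is supported in the annulus \(r\le|x-p|\le2r\), where \(\psi_p\) is bounded below. So this works. By translation invariance, everything is uniform in \(p\).

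The key steps, in order, are as follows. First, the IMS identity gives
\[
\|\nabla v\|_2^2=\|\nabla(\chi v)\|_2^2+\|\nabla(\zeta v)\|_2^2-\int(|\nabla\chi|^2+|\nabla\zeta|^2)|v|^2 .
\]
Second, for the quartic term use \(|v|^4=(\chi^2+\zeta^2)^2|v|^4\le \ldots\). Here I would rather use
\[
\int|v|^4=\int\chi^2|v|^4+\int\zeta^2|v|^4,
\]
together with \(\int\chi^2|v|^4\le\|\chi v\|_4^2\|v\|_4^2\). This is awkward. Cleaner is \(\chi^2|v|^4\le|\chi v|^4+\ldots\), which fails since \(\chi\le1\) gives \(\chi^4\le\chi^2\). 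Instead I will choose \(\chi,\zeta\) with \(\chi^4+\zeta^4\ge\ldots\). The honest route is to apply \eqref{glob:euclidean-gn} to \(\chi v\) extended by zero to \(\mathbb R^2\), and a torus Gagliardo--Nirenberg inequality with the same constant for \(\zeta v\) on the support of \(\zeta\). That support is the torus minus a ball and has no sharp constant. I do not know a clean way past this, so I will avoid sharp GN away from \(p\).

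The decisive alternative is to choose \(\psi_p\) to vanish only at \(p\) and to be such that \(\zeta\) is supported where... this is the step I expect to be the main obstacle: the sharp constant must hold for functions spread over the whole torus. My approach would be to lift to a large fundamental domain. View \(v\) as \(L\)-periodic and, for \(R\) an integer, cut it off on \([-RL/2,RL/2]^2\) with a cutoff \(\eta_R\) that has gradient \(O(1/(RL))\). Applying \eqref{glob:euclidean-gn} to \(\eta_Rv\) and dividing by \(R^2\) gives, as \(R\to\infty\), the inequality \(\|v\|_4^4\le\frac2m M(v)\|\nabla v\|_2^2\) on the torus with only a vanishing error. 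That would suggest \(W_p=0\) works, which contradicts the existence of the constructed solution with \(M=m\), \(E=0\) and nonconstant profile. The flaw is that the mass over \(R^2\) cells is \(R^2M(v)\), so the constant degrades by a factor \(R^2\). Hence periodic images genuinely destroy sharpness, and the localization near \(p\) must be done properly.

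The final plan is therefore the following. Write \(|v|^4=|v|^4(\chi^2+\zeta^2)\), where \(\zeta\) vanishes to second order at \(p\), for instance \(\zeta^2=\min(1,\psi_p/r^2)\) smoothed. Bound \(\int\zeta^2|v|^4\) by a nonsharp inequality using \(\|\zeta v\|\), and then use that the quartic term is one half of the energy. Apply \eqref{glob:euclidean-gn} to \(\chi v\), using \(M(\chi v)\le M(v)\), and use \(|v|^4\chi^2\le|\chi v|^4+\chi^2\zeta^2|v|^4\). The nonsharp pieces must then be absorbed into \(\int W_p|v|^2\), with \(W_p\) built from \(|\nabla\chi|^2\) and \(\zeta^2\) (times the coefficient). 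The step I cannot yet close is absorbing the quartic term \(\int\zeta^2|v|^4\) into a quadratic weight. This is the main obstacle, and it would need an extra idea I do not have settled.
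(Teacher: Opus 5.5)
Your proposal has a genuine gap, and it is the step you flag yourself. With only two pieces, \(\zeta v\) lives on the torus minus a ball, where \eqref{glob:euclidean-gn} is unavailable. Your fallback cannot work: bounding \(\int\zeta^2|v|^4\) (and the cross term \(2\int\chi^2\zeta^2|v|^4\)) non-sharply and absorbing the result into a quadratic weight fails.

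To see this, take \(v\approx\sqrt{M/m}\,\lambda^{-1}Q((x-q)/\lambda)\) with \(\zeta=1\) near \(q\), and let \(\lambda\to0\). Then \(\frac14\|v\|_4^4\) and \(\frac{M}{2m}\|\nabla v\|_2^2\) both equal \(\frac{M^2}{2m}\lambda^{-2}\) to leading order. Meanwhile \(\int W_p|v|^2\le\|W_p\|_\infty M\) stays bounded. So \eqref{glob:energy-local} is saturated by bubbles located away from \(p\). The quartic term there must therefore also be controlled with the sharp constant. Neither a bounded potential nor a worse Gagliardo--Nirenberg constant can absorb it.

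Two ideas are missing, and both are elementary.

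\emph{Use more pieces.} Take a finite quadratic partition of unity \(\sum_j\chi_j^2=1\) of the whole torus, with each \(\chi_j\) supported in a flat chart (e.g.\ a ball of radius \(<L/2\)), and arrange \(\chi_0\equiv1\) near \(p\). Then every \(\chi_jv\), extended by zero, is a function on \(\mathbb R^2\) to which \eqref{glob:euclidean-gn} applies, including the pieces away from \(p\). The weight \(W_p=\sum_j|\nabla\chi_j|^2\) vanishes identically near \(p\). Choosing \(\psi_p\) quadratic near \(p\) and positive elsewhere, compactness gives \(W_p\le C\psi_p\) and \(|\nabla\psi_p|^2\le C\psi_p\). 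Translation gives uniformity in \(p\). Your worry about uniform constants does not arise: you cover all of \(\mathbb T_L^2\) by a fixed number of charts, one of which contains \(p\), and you do not cover \(\mathbb T_L^2\setminus\{p\}\).

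\emph{Combine the pieces by Minkowski in \(L^2\), not pointwise.} Write
\[
 \|v\|_4^2=\Big\|\sum_j|\chi_jv|^2\Big\|_2\le\sum_j\|\chi_jv\|_4^2
 \le\sqrt{\tfrac2m}\sum_j\|\chi_jv\|_2\|\nabla(\chi_jv)\|_2 .
\]
Then apply Cauchy--Schwarz in \(j\), use \(\sum_j\|\chi_jv\|_2^2=M(v)\), and use the IMS identity \(\sum_j\|\nabla(\chi_jv)\|_2^2=\|\nabla v\|_2^2+\int W_p|v|^2\), which you already had. This gives
\[
 \|v\|_4^4\le\frac{2M(v)}m\Big(\|\nabla v\|_2^2+\int W_p|v|^2\Big),
\]
which is \eqref{glob:energy-local}. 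The final assertion then follows from \(M(v)\le m\) and \(W_p\le C\psi_p\).

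This is the paper's argument. The Minkowski step is what lets the sharp constant survive localization with the full mass \(M(v)\): it disposes of the cross terms \(\chi_j^2\chi_k^2|v|^4\) that blocked your two-piece attempt.
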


\begin{proof}
Choose a finite smooth quadratic partition of unity
\(\sum_j\chi_j^2=1\), with each \(\chi_j\) supported in a flat
coordinate chart. Arrange that \(\chi_0=1\) near \(p\), and set
\(W_p=\sum_j|\nabla\chi_j|^2\).
This function vanishes near \(p\). Choose \(\psi_p\) smooth,
quadratic near \(p\), and strictly positive elsewhere. Compactness
gives the two pointwise bounds in the statement.

Minkowski's inequality and \eqref{glob:euclidean-gn}, applied to
each cutoff product extended by zero to \(\mathbb R^2\), give
\[
 \|v\|_4^2
 =\Big\|\sum_j|\chi_jv|^2\Big\|_2
 \le\sum_j\|\chi_jv\|_4^2
 \le\sqrt{\frac2m}\sum_j
             \|\chi_jv\|_2\|\nabla(\chi_jv)\|_2.
\]
Cauchy--Schwarz and the identity
\[
 \sum_j\|\nabla(\chi_jv)\|_2^2
       =\|\nabla v\|_2^2+\int W_p|v|^2
\]
therefore yield
\[
 \|v\|_4^4\le\frac{2M(v)}m
       \left(\|\nabla v\|_2^2+\int W_p|v|^2\right).
\]
This proves \eqref{glob:energy-local}. Translating the construction
on the flat torus makes its constants independent of \(p\).
\end{proof}

The same inequality applied after a periodic gauge transformation controls
the local mass current. This is the step where the sign of the energy
is used.

\begin{lemma}\label{glob:moment}
Let \(u\) be a smooth solution with \(0<M(u)\le m\) and \(E(u)\le0\).
For each fixed \(p\), the moment
\[
 V_p(t)=\int\psi_p|u(t)|^2
\]
satisfies
\begin{equation}\label{glob:moment-ode}
 |V_p'(t)|\le C V_p(t).
\end{equation}
In particular \(V_p(t)\) cannot tend to zero at a finite endpoint
of the lifespan.
\end{lemma}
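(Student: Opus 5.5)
The plan is to compute \(V_p'\) from the local conservation law for the mass density and then control the resulting current term with Lemma~\ref{glob:localized}, applied to a phase-modulated copy of \(u\). For a smooth solution, \(\partial_t|u|^2=-2\nabla\cdot\operatorname{Im}(\overline u\nabla u)\). Integrating by parts on the torus gives
\[
 V_p'(t)=2\operatorname{Im}\int\nabla\psi_p\cdot\nabla u\,\overline u .
\]
This quantity has to be bounded by \(CV_p\) without using \(\|\nabla u\|_2\), which is not controlled near concentration.

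The main step is a gauge argument, the classical Banica/Merle-type trick. For real \(\theta\), consider \(v_\theta=e^{i\theta\psi_p}u\). Since \(\psi_p\) is smooth and real on the torus, \(v_\theta\) is periodic and \(M(v_\theta)=M(u)\le m\). Expanding the gradient, and noting that \(|v_\theta|^4=|u|^4\), gives
\[
 E(v_\theta)=E(u)+\theta\operatorname{Im}\int\nabla\psi_p\cdot\nabla u\,\overline u
   +\frac{\theta^2}{2}\int|\nabla\psi_p|^2|u|^2 .
\]
Denote the middle integral by \(\mathcal J\), so that \(V_p'=2\mathcal J\). By Lemma~\ref{glob:localized}, since \(M(v_\theta)\le m\), we have \(E(v_\theta)\ge-C\int\psi_p|v_\theta|^2=-CV_p\). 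We also have \(E(u)\le0\) and \(|\nabla\psi_p|^2\le C\psi_p\). Combining these yields, for every real \(\theta\),
\[
 0\le CV_p+\theta\mathcal J+C\theta^2V_p ,
\]
where the hypothesis \(E(u)\le0\) is used to drop \(E(u)\) on the right. This is exactly where the sign of the energy enters.

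Next, choose \(\theta=\mp1\) according to the sign of \(\mathcal J\). This gives \(|\mathcal J|\le 2CV_p\), and hence \(|V_p'|\le CV_p\). If one prefers to optimize, treat it as a quadratic in \(\theta\) with nonnegative values, which gives \(\mathcal J^2\le 4C^2V_p^2\). Either way \eqref{glob:moment-ode} follows, with a constant uniform in \(p\) because the constants in Lemma~\ref{glob:localized} are.

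The last assertion follows by Gronwall. On the lifespan, \(V_p(t)\ge V_p(t_1)e^{-C|t-t_1|}\). Moreover \(V_p(t_1)>0\), because \(M(u)>0\) and \(\psi_p\) vanishes only at one point. So \(V_p\) stays bounded below on any finite interval and cannot tend to zero at a finite endpoint.

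I expect no step to be a serious obstacle. The only points needing care are the justification of the differentiation, which is classical for smooth solutions, and checking that the gauge transformation stays within \(H^1\) and preserves the mass hypothesis of Lemma~\ref{glob:localized}. Both are immediate because \(\psi_p\) is a smooth periodic function.
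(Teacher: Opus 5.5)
Your proposal is correct and follows the paper's proof essentially line by line. The ingredients match: the gauge $e^{i\theta\psi_p}u$, Lemma~\ref{glob:localized} combined with $E(u)\le0$ and $|\nabla\psi_p|^2\le C\psi_p$, a bound on the current from the nonnegative quadratic in $\theta$ (your choice $\theta=\mp1$ is an equivalent shortcut to the paper's discriminant), the identity $V_p'=2\operatorname{Im}\int\nabla\psi_p\cdot\nabla u\,\overline u$, and Gronwall using $V_p>0$ at regular times.
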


\begin{proof}
For a fixed time write
\[
 A=\int|\nabla\psi_p|^2|u|^2,\qquad
 J_p=\operatorname{Im}\int\nabla\psi_p\cdot\nabla u\,\overline u .
\]
For every real \(a\), the function \(e^{ia\psi_p}u\) is periodic
and has the same mass as \(u\). Lemma~\ref{glob:localized} and
expansion of its kinetic energy give
\[
 0\le E(e^{ia\psi_p}u)+CV_p
       =E(u)+CV_p+aJ_p+\tfrac12a^2 A .
\]
The discriminant is nonpositive, so
\[
 J_p^2\le2A(E(u)+CV_p)\le C V_p^2 .
\]
The last step uses \(E(u)\le0\) and \(A\le CV_p\).
The local mass identity is \(V_p'=2J_p\), proving
\eqref{glob:moment-ode}. At a regular time \(V_p>0\), since
the nonzero \(L^2\) function \(u\) cannot be supported at the single
zero of \(\psi_p\). Gronwall supplies a positive lower bound on
each finite time interval.
\end{proof}

The calculation remains valid for an \(H^3\) solution: its time
derivative belongs to \(H^1\), which justifies the differentiated
moment and the integrations by parts. We shall use this observation
in the continuation argument.

\subsection{Concentration at a hypothetical finite endpoint}

We need only a mass concentration statement, which can be obtained
directly from \eqref{glob:euclidean-gn}. This avoids importing
Euclidean dynamical information into the periodic problem.

\begin{lemma}\label{glob:concentration}
Let \((v_n)\) be a sequence in \(H^1(\mathbb T_L^2)\).
Suppose \(M(v_n)=M>0\), \(E(v_n)=E_*\) is fixed, and
\(\|\nabla v_n\|_2\to\infty\). Then, for every sufficiently small
fixed \(r>0\),
\[
 \liminf_{n\to\infty}
       \sup_{a\in\mathbb T_L^2}\int_{B_r(a)}|v_n|^2\ge m .
\]
Thus this situation is impossible if \(M<m\). If \(M=m\), a
subsequence has
\[
 |v_n|^2\,dx\rightharpoonup m\delta_p
\]
for some \(p\in\mathbb T_L^2\).
Here \(\delta_p\) is the unit Dirac measure at \(p\), and the
arrow denotes convergence against every continuous test function
on \(\mathbb T_L^2\).
\end{lemma}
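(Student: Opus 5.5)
The plan is a localized version of the standard concentration-compactness argument for Gagliardo--Nirenberg. Since $E(v_n)=E_*$ is fixed and $\|\nabla v_n\|_2\to\infty$, we have
\[
 \frac14\|v_n\|_4^4=\frac12\|\nabla v_n\|_2^2-E_*,
 \qquad\text{hence}\qquad
 \frac{\|v_n\|_4^4}{\|\nabla v_n\|_2^2}\to 2 .
\]
Fix a small $r$ (below $L/8$, say). Cover $\mathbb T_L^2$ by a quadratic partition of unity $\sum_j\chi_j^2=1$ whose pieces have support in balls $B_r(a_j)$, with $|\nabla\chi_j|\le C/r$ and bounded overlap. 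Set $\mu_n=\sup_a\int_{B_r(a)}|v_n|^2$.

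Arguing as in the proof of Lemma~\ref{glob:localized}, we have $|v|^4=(\sum_j|\chi_jv|^2)^2$. Instead of Minkowski, use
\[
 \Big(\sum_j x_j\Big)^2\le \Big(\max_j\ \text{overlap}\Big)\sum_j x_j^2 ,
\]
or better the pointwise bound $|v|^4=\sum_j\chi_j^2|v|^4\le\sum_j\chi_j^2|v|^2\cdot|v|^2$. A cleaner route is the following. Apply \eqref{glob:euclidean-gn} to each $\chi_jv$, extended to $\mathbb R^2$, together with Minkowski:
\[
 \|v\|_4^2\le\sum_j\|\chi_jv\|_4^2\le\sqrt{2/m}\,\sum_j\|\chi_jv\|_2\|\nabla(\chi_jv)\|_2 .
\]
Then bound $\|\chi_jv\|_2\le\mu_n^{1/2}$ on one factor and use Cauchy--Schwarz in $j$ with weights:
\[
 \sum_j\|\chi_jv\|_2\|\nabla(\chi_j v)\|_2\le\Big(\sum_j\|\chi_jv\|_2^2\Big)^{1/2}\Big(\sum_j\|\nabla(\chi_jv)\|_2^2\Big)^{1/2}.
\]
This gives only $M$, not $\mu_n$, so a sharper device is needed. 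Instead I would write $\|v\|_4^4=\sum_j\int\chi_j^2|v|^4$ and choose the partition with $\chi_j^2\le\tilde\chi_j^2$, where $\tilde\chi_j$ is a slightly fatter cutoff still supported in a ball of radius $r$. Then
\[
 \int\chi_j^2|v|^4\le\int|\tilde\chi_jv|^2|v|^2,
\]
but the remaining $|v|^2$ is not localized.

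The simplest correct route is to keep Minkowski, but apply Hölder differently:
\[
 \sum_j\|\chi_jv\|_2\|\nabla(\chi_jv)\|_2\le\mu_n^{1/2}\sum_j\|\nabla(\chi_jv)\|_2 .
\]
This fails too, because the sum of norms is not the square root of the sum of squares. I would therefore use the quartic form directly. The partition can be chosen as a finite union of $K$ families, each consisting of pieces with pairwise disjoint supports. Within one family $\mathcal F_k$ the function $\sum_{j\in\mathcal F_k}\chi_j v$ has disjoint pieces, so
\[
 \sum_{j\in\mathcal F_k}\|\chi_jv\|_4^4\le\frac2m\,\mu_n\sum_{j\in\mathcal F_k}\|\nabla(\chi_jv)\|_2^2 .
\]
Finally, $\int|v|^4=\int(\sum_j\chi_j^2|v|^2)^2$. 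To get the sum of $\int\chi_j^4|v|^4$ one needs $\chi_j^2$ replaced by indicator-like functions. So I choose $\chi_j$ with $\chi_j=1$ on most of its support, and split $|v|^4$ into the region where some $\chi_j\equiv1$ and a thin transition region $\Sigma$ of small relative measure. Averaging over translates of the grid, which is the key trick, gives
\[
 \int_\Sigma|v_n|^4\le\theta\|v_n\|_4^4,
\]
with $\theta\to0$ as the transition width shrinks relative to $r$. This yields
\[
 (1-\theta)\|v_n\|_4^4\le\frac2m\,\mu_n\big(\|\nabla v_n\|_2^2+C_r M\big).
\]
Dividing by $\|\nabla v_n\|_2^2$ and letting $n\to\infty$ gives $2(1-\theta)\le\frac2m\liminf\mu_n$, that is $\liminf\mu_n\ge(1-\theta)m$. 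Letting $\theta\to0$ along a choice of shifted grids (the sup over $a$ is grid-independent) gives $\liminf\mu_n\ge m$. I expect this averaging over translated grids to be the main technical point.

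The consequences follow directly. If $M<m$, then $\mu_n\le M<m$ contradicts the bound. If $M=m$, pick centres $a_n$ with $\int_{B_r(a_n)}|v_n|^2\ge m-o(1)$ and pass to a subsequence with $a_n\to p_r$. Applying this for $r=1/k$ and diagonalizing, the points $p_{1/k}$ must coincide up to $2/k$: two disjoint balls each carrying almost all of the mass $m$ is impossible. The limit point $p$ then satisfies $\int_{B_\delta(p)}|v_n|^2\to m$ for every $\delta>0$, and since the total mass equals $m$, this gives $|v_n|^2dx\rightharpoonup m\delta_p$.
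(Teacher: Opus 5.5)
Your final argument (plateau partition plus averaging over grid translates) is correct, but it is a discretized and longer version of the paper's device. The paper applies \eqref{glob:euclidean-gn} to a single cutoff $\chi(\cdot-a)v_n$ supported in $B_r(a)$ and integrates over \emph{all} centres $a\in\mathbb T_L^2$. The left side becomes exactly $\|\chi\|_4^4\|v_n\|_4^4$, and the mixed gradient term vanishes because $\int\chi\nabla\chi=0$. This gives
\[
 \|\chi\|_4^4\|v_n\|_4^4\le\frac{2\mu_r(v_n)}m\bigl(\|\chi\|_2^2\|\nabla v_n\|_2^2+\|\nabla\chi\|_2^2M\bigr),
\]
after which $\chi$ is sent to the indicator of the ball, so that $\|\chi\|_4^4/\|\chi\|_2^2\to1$.

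Your route handles the quartic form on the plateaus, where $|v|^4=|\chi_jv|^4$. There the termwise bound $\|\chi_jv\|_2^2\le\mu_n$ already suffices, so the disjoint families are not needed. You use averaging only to pick, for each $n$, a grid translate whose transition layer carries at most $\theta\|v_n\|_4^4$. This is legitimate because neither $\mu_n$ nor $\sum_j|\nabla\chi_j|^2$ depends on the translate, though the grid must have side $L/K$ to tile the torus.

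The paper's version yields an exact identity and avoids the transition-layer bookkeeping, the commensurability condition and the $n$-dependent choice of translate. Yours makes transparent why some averaging is needed at all.

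Your constant in front of $M$ is really $C_{r,\theta}$, not $C_r$. It comes from $\sum_j|\nabla\chi_j|^2$ and blows up as $\theta\to0$, so state explicitly that $n\to\infty$ is taken before $\theta\to0$; the paper makes the same remark about its cutoff.

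Your treatment of $M=m$ is correct. Diagonalizing over $r=1/k$ and noting that two disjoint balls cannot each carry mass near $m$ is equivalent to the paper's choice of $r_j\downarrow0$, $n_j$, $a_j$ with one convergent subsequence of centres. The abandoned attempts in the middle of your write-up should simply be removed.
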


\begin{proof}
Take a smooth cutoff \(0\le\chi\le1\) supported in a flat ball
of radius \(r\), and apply \eqref{glob:euclidean-gn} to
\(\chi(\cdot-a)v_n\). Its mass is at most
\(\mu_r(v_n)=\sup_a\int_{B_r(a)}|v_n|^2\).
Integrating over the centre \(a\) gives
\[
 \|\chi\|_4^4\|v_n\|_4^4
 \le\frac{2\mu_r(v_n)}m
   \left(\|\chi\|_2^2\|\nabla v_n\|_2^2
                       +\|\nabla\chi\|_2^2M\right).
\]
The cross term disappears because
\(\int\chi\nabla\chi=0\).
At fixed energy,
\(\|v_n\|_4^4/\|\nabla v_n\|_2^2\to2\).
After division and passage to the limit, we obtain
\[
 \liminf_n\mu_r(v_n)\ge
           m\,\frac{\|\chi\|_4^4}{\|\chi\|_2^2}.
\]
Cutoffs approaching the indicator of the ball make this ratio
arbitrarily close to one. The limit in \(n\) is taken before
changing the cutoff, so its gradient norm causes no difficulty.

If \(M=m\), choose successively radii \(r_j\downarrow0\), indices
\(n_j\), and centres \(a_j\) so that the mass in \(B_{r_j}(a_j)\)
is at least \(m-1/j\). A subsequence of \(a_j\) converges to
some \(p\). The full mass lies asymptotically in any neighbourhood
of \(p\), proving the weak convergence of measures.
\end{proof}

These two local facts imply a continuation result which is useful beyond
the particular solution constructed here.

\begin{proposition}\label{glob:continuation}
Every \(H^3(\mathbb T_L^2)\) datum with \(0<M\le m\) and
\(E\le0\) has a solution global in both time directions.
If the datum is smooth, the solution is smooth at every finite time.
\end{proposition}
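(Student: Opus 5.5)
The argument is by contradiction, combining the local $H^3$ theory with Lemmas~\ref{glob:concentration} and \ref{glob:moment}. By time reversal (complex conjugation preserves mass, energy and the equation) it suffices to treat positive times. Let $T^*<\infty$ be the maximal forward time of the $H^3$ solution from the contraction argument in Section~\ref{sec:construction}.

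The first step is to reduce the blow-up criterion to $H^1$. If $\|\nabla u(t)\|_2$ stayed bounded on $[0,T^*)$, the solution would have to extend. To see this, use the Brezis--Gallou\"et inequality on $\mathbb T_L^2$,
\[
 \|u\|_\infty\le C\|u\|_{H^1}\bigl(1+\log(1+\|u\|_{H^3})\bigr)^{1/2}.
\]
Combined with the energy estimate $\frac{d}{dt}\|u\|_{H^3}^2\le C\|u\|_\infty^2\|u\|_{H^3}^2$ (Kato--Ponce type, available on the torus), this gives a differential inequality
\[
 y'\le C(1+y)\log(1+y)
\]
for $y=\|u\|_{H^3}^2$, whose solutions grow at most double-exponentially. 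Hence the $H^3$ norm stays finite up to $T^*$, which contradicts maximality. The same tame estimates propagate every $H^k$ norm, so smooth data remain smooth; the only ingredient needed is the $L^\infty$ bound.

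Consequently $\|\nabla u(t_n)\|_2\to\infty$ along some $t_n\uparrow T^*$. Mass and energy are conserved in $H^3$, so Lemma~\ref{glob:concentration} applies to $v_n=u(t_n)$. If $M<m$, this is impossible. If $M=m$, a subsequence satisfies $|u(t_n)|^2dx\rightharpoonup m\delta_p$ for some $p$.

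Now take $V_p(t)=\int\psi_p|u(t)|^2$. Since $\psi_p$ is continuous and vanishes only at $p$, the weak convergence gives $V_p(t_n)\to m\,\psi_p(p)=0$. On the other hand, Lemma~\ref{glob:moment} (valid for $H^3$ solutions, as remarked after its proof), with $E\le0$ and $0<M\le m$, gives
\[
 V_p(t)\ge V_p(0)e^{-Ct}\ge V_p(0)e^{-CT^*}>0,
\]
where $V_p(0)>0$ because $u(0)\ne0$. This contradiction shows $T^*=\infty$.

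The main obstacle is the first step. One must check that the logarithmic inequality closes with uniform constants on the fixed torus, and that the time derivatives of $\|u\|_{H^3}^2$ are justified for $H^3$ solutions; for the latter I would argue first for smooth data and then pass to $H^3$ data by approximation with continuous dependence.
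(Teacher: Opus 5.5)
Your proof is correct and follows the paper's argument essentially step for step. A Brezis--Gallou\"et plus tame-estimate Gronwall argument shows that $H^3$ breakdown forces $\|\nabla u(t_n)\|_2\to\infty$. Lemma~\ref{glob:concentration} then excludes $M<m$ and gives $|u(t_n)|^2dx\rightharpoonup m\delta_p$ when $M=m$. This contradicts the exponential lower bound on $V_p$ from Lemma~\ref{glob:moment}. Treating the backward direction by conjugation is equivalent to the paper's remark that the same proof applies at either endpoint.

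One small correction concerns the logarithmic inequality. As you wrote it, with a linear factor $\|u\|_{H^1}$ and $\log(1+\|u\|_{H^3})$, it is false in general. It fails for $u=\epsilon\phi$ with $\|\phi\|_{H^1}=1$, $\|\phi\|_\infty$ large and $\epsilon=\|\phi\|_{H^3}^{-1}$. You should either use the form the paper uses, $\|u\|_\infty^2\le C_K(1+\log(2+\|u\|_{H^3}))$ for $\|u\|_{H^1}\le K$, or note that here $\|u\|_{H^1}\ge\sqrt M>0$, so your version holds with $M$-dependent constants.
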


\begin{proof}
First, boundedness of \(H^1\) on a finite time interval prevents
\(H^3\) breakdown. Indeed, the periodic version of the logarithmic Sobolev
inequality \cite[Lemma~2]{BrezisGallouet80} and the tame cubic estimate give
\[
 \|u\|_\infty^2
 \le C_K\bigl(1+\log(2+\|u\|_{H^3})\bigr),\qquad
 \left|\frac{d}{dt}\|u\|_{H^3}\right|
       \le C\|u\|_\infty^2\|u\|_{H^3}
\]
whenever \(\|u\|_{H^1}\le K\). Equivalently,
\(\log(2+\|u\|_{H^3})\) satisfies a linear Gronwall inequality.
It cannot diverge in finite time. These estimates follow first
for smooth solutions and extend to \(H^3\) by approximation.

Suppose that a maximal \(H^3\) solution had a finite endpoint \(T\).
The preceding observation provides times \(t_n\to T\) with
\(\|\nabla u(t_n)\|_2\to\infty\).
Lemma~\ref{glob:concentration} excludes this if \(M<m\).
If \(M=m\), it gives a point \(p\) and a subsequence for which
\(V_p(t_n)\to0\). This contradicts Lemma~\ref{glob:moment}.
The same proof applies to either endpoint of the lifespan.

On each compact time interval the \(H^3\) norm is finite.
The tame estimates then propagate every higher Sobolev order of
the datum. The equation supplies the time derivatives, giving
joint smoothness in time and space.
\end{proof}

The solution of Section~\ref{sec:construction} has \(M=m\), \(E=0\),
and a smooth datum. Proposition~\ref{glob:continuation} therefore
extends that very solution to all real times. Cauchy uniqueness
identifies the extension with the already constructed positive-time
orbit. This completes the global assertion in Theorem~\ref{thm:main}.
No conclusion about its asymptotic behaviour as \(t\to-\infty\)
is needed here.

\subsection{A bounded comparison at the same mass and energy}

The zero-energy constraint does not itself force the growth in
Theorem~\ref{thm:main}. The following explicit example makes this
distinction precise, even within the square-symmetric class.

\begin{proposition}\label{glob:bounded-example}
There is a smooth, nonstationary, square-symmetric solution \(v\)
with \(M(v)=m\), \(E(v)=0\), and
\[
 \sup_{t\in\mathbb R}\|v(t)\|_{H^1(\mathbb T_L^2)}<\infty .
\]
\end{proposition}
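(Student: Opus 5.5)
The plan is to obtain the uniform \(H^1\) bound from a discrete translation symmetry rather than from any dynamics, using Lemma~\ref{glob:concentration}. The datum will be invariant under the square symmetries and under the half-period translations \(x\mapsto x+\tfrac L2e_1\) and \(x\mapsto x+\tfrac L2e_2\). All these symmetries are preserved by the flow, by uniqueness of the Cauchy problem. A solution of mass \(m\) with this invariance cannot concentrate at a single point, and that is what will keep the \(H^1\) norm bounded.

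The datum is chosen first. Fix an even integer \(N\) and consider, for \(\theta\in[0,1]\),
\[
 v_\theta=c_\theta\Bigl(1+\theta\bigl(\cos(2\pi Nx_1/L)+\cos(2\pi Nx_2/L)\bigr)\Bigr),
\]
with \(c_\theta>0\) chosen so that \(M(v_\theta)=m\). These functions are smooth, even in each coordinate, invariant under exchanging the coordinates, and \(L/2\)-periodic in each variable because \(N\) is even. The map \(\theta\mapsto E(v_\theta)\) is continuous.
\begin{itemize}
\item At \(\theta=0\) the datum is constant, so \(E(v_0)=-\tfrac14 L^2c_0^4<0\).
\item At \(\theta=1\), the relation \(c_1^2\asymp m/L^2\) gives \(\|v_1\|_\infty\le 3c_1\), so the quartic term is bounded independently of \(N\). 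The kinetic term is \(\asymp N^2m/L^2\). Hence \(E(v_1)>0\) once \(N\) is large.
\end{itemize}
The intermediate value theorem then gives \(\theta_*\) with \(E(v_{\theta_*})=0\). Proposition~\ref{glob:continuation} yields a global smooth solution \(v\) with \(M(v)=m\) and \(E(v)=0\), and uniqueness transfers all the symmetries to \(v(t)\) for every \(t\).

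Next comes the bound itself, argued by contradiction. Suppose \(\|\nabla v(t_n)\|_2\to\infty\) along some sequence of times. Lemma~\ref{glob:concentration}, applied with fixed mass and energy, gives for small fixed \(r<L/4\)
\[
 \liminf_n\sup_a\int_{B_r(a)}|v(t_n)|^2\ge m .
\]
By the half-period invariance, the mass in \(B_r(a)\) equals the mass in \(B_r(a+\tfrac L2e_1)\), and these two balls are disjoint. The total mass would then be asymptotically at least \(2m\), contradicting \(M=m\). Therefore \(\sup_t\|\nabla v(t)\|_2<\infty\), and together with conservation of mass this bounds the \(H^1\) norm uniformly in \(t\in\mathbb R\).

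Finally, \(v\) is nonstationary. A time-independent solution satisfies \(\Delta v+|v|^2v=0\). Pairing with \(v\) gives \(\int|\nabla v|^2=\int|v|^4\), hence \(E(v)=-\tfrac14\int|v|^4<0\), since \(v\neq0\). This contradicts \(E=0\). The same identity also rules out \(v_{\theta_*}\) being a time-independent solution.

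I expect the only delicate step to be the application of Lemma~\ref{glob:concentration}. The lemma is stated for a sequence, so it must be applied along an arbitrary sequence of times with divergent gradient, and the radius must satisfy \(r<L/4\) so that the two balls are disjoint. The remaining steps are elementary.
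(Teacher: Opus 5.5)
Your uniform \(H^1\) bound is correct, but you reach it by a different route from the paper. The paper uses the explicit datum with \(N=2\), computes the averages of \(S\) exactly, and takes \(a\) to be the root of an explicit quadratic in \(a^2\). It then regards the solution as living on the smaller torus \(\mathbb T_{L/2}^2\), where its mass is \(m/4<m\). There the partitioned inequality \(\|f\|_4^4\le\frac{2M(f)}{m}\|\nabla f\|_2^2+C(L/2)^{-2}M(f)^2\) and energy conservation give an explicit gradient bound, and global existence, before lifting back to \(\mathbb T_L^2\). You instead stay on \(\mathbb T_L^2\) at critical mass. You choose the datum by the intermediate value theorem with large even \(N\), take global existence from Proposition~\ref{glob:continuation}, and exclude gradient divergence by contradiction using Lemma~\ref{glob:concentration} and the half-period translation. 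This is essentially a rigorous version of the remark the paper makes after the proposition. Your version avoids computing averages but yields no explicit constant. The paper's version is quantitative and relies only on subcriticality on the smaller cell.

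The nonstationarity step is incomplete. For a phase-invariant equation, the relevant claim, and the one the paper proves, is that \(v\) is not a standing wave \(e^{i\omega t}\phi\) for any real \(\omega\). Your energy identity only handles \(\omega=0\). For \(\Delta\phi-\omega\phi+|\phi|^2\phi=0\), pairing with \(\phi\) gives \(E(\phi)=\frac14\bigl(\|\nabla\phi\|_2^2-\omega M(\phi)\bigr)\). This vanishes when \(\omega=\|\nabla\phi\|_2^2/m\), so \(E=0\) alone excludes nothing. There is also a sign slip: for \(\omega=0\) the identity gives \(E=+\frac14\int|v|^4\), not \(-\frac14\int|v|^4\), though your conclusion \(E\neq0\) still holds.

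The fix is the paper's harmonic count. First, \(\theta_*\neq0\) because \(E(v_0)<0\). Then \(v_{\theta_*}^3\) contains the pure mode \(\cos(6\pi Nx_1/L)\) with coefficient \(c_{\theta_*}^3\theta_*^3/4\neq0\). By contrast, \(\Delta v_{\theta_*}-\omega v_{\theta_*}\) contains only the constant and the modes \(\cos(2\pi Nx_j/L)\). Hence \(v_{\theta_*}\) is not a standing-wave profile for any \(\omega\).
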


\begin{proof}
Applying \eqref{glob:euclidean-gn} to
\(e^{-|x|^2/2}\) gives \(m\le4\pi\).
For \(0<a<1/2\), put
\[
 v_a(x)=\frac{\sqrt m}{L\sqrt{1+a^2}}
 \left(1+a\cos\frac{4\pi x_1}{L}
         +a\cos\frac{4\pi x_2}{L}\right).
\]
For a periodic function \(F\), write
\(\langle F\rangle=L^{-2}\int_{\mathbb T_L^2}F(x)\,dx\)
for its spatial average.
The averages of
\(S=\cos(4\pi x_1/L)+\cos(4\pi x_2/L)\)
are \(\langle S\rangle=\langle S^3\rangle=0\),
\(\langle S^2\rangle=1\), and \(\langle S^4\rangle=9/4\).
Hence \(M(v_a)=m\), and \(E(v_a)=0\) precisely when \(z=a^2\)
satisfies
\[
 (32\pi^2-9m/4)z^2+(32\pi^2-6m)z-m=0.
\]
Both nonconstant coefficients are positive. The polynomial is
negative at zero and positive at \(1/4\), since its value there is
\(10\pi^2-169m/64>0\). Its unique positive root therefore gives
the required \(a\).

The datum has period \(L/2\) in both coordinates. On that smaller
torus its mass is \(m/4\) and its energy is zero. The partition
argument in Lemma~\ref{glob:localized} gives, without localizing
at a prescribed point,
\[
 \|f\|_4^4\le\frac{2M(f)}m\|\nabla f\|_2^2
                   +C(L/2)^{-2}M(f)^2 .
\]
Since \(M(f)=m/4\), energy conservation bounds its gradient
uniformly. The resulting smooth global solution lifts to the
original torus, and uniqueness preserves its square symmetries
and additional half-period translations.

Finally, \(v_a\) cannot be a standing-wave profile:
the term \(v_a^3\) has a nonzero pure third harmonic in \(x_1\),
whereas \(\Delta v_a-\omega v_a\) has only its constant and
first coordinate harmonics. Thus no real \(\omega\) satisfies
\(\Delta v_a-\omega v_a+v_a^3=0\).
\end{proof}

The additional translations distinguish this example from the solution
of Theorem~\ref{thm:main}: an invariant mass measure under those
translations cannot converge to a single Dirac mass.
Proposition~\ref{glob:bounded-example} concerns boundedness in
\(H^1\), and makes no assertion of uniform bounds at higher
Sobolev orders.

\section{Uniqueness of the outgoing orbit}\label{sec:uniqueness}

We now show that the terminal construction selects a single orbit, up to
the two scalar symmetries of time translation and constant phase.  The
class in which uniqueness holds includes the full remainder estimates,
not only the leading rate of growth of the Sobolev norms.

Throughout this section the concentration centre is fixed at the origin,
and all functions are even in each coordinate.  We retain the profile,
modulation conditions, and notation of Section~\ref{sec:profile}.  Thus
\(\lambda=L/D\), \(P=P_{D,b}\),
\(V=I_DP=\lambda^{-1}P(\cdot/\lambda)\), and
\[
 u=e^{i\gamma}(V+w),\qquad
 \varepsilon=I_D^{-1}w.
\]
The map \(I_D\) is an isometry from the dimensionless \(L^2\) space to
the physical \(L^2\) space, and identifies the dimensionless \(H^k\)
norm with an equivalent physical scaled norm \(\|\cdot\|_{k,\lambda}\).
We write \(B\) for the positive zero-energy profile branch and put
\[
 B_0(D)=\sqrt{2\alpha}\,D^{-1/4}e^{-D/2},\qquad
 R(D)=D^{-1/2}e^{-D}.
\]
In particular \(B=B_0(1+O(D^{-1}))\) and \(R\asymp B^2\).
Constants and finite polynomial exponents below may depend on the
indicated Sobolev order.  Every occurrence of \(D^p\) denotes a fixed
finite power, and these powers may be increased when a finite collection
of estimates is combined.

We also use \(F=C D^p e^{-D}\) for a generic bound of the
same kind as \(\mathcal F(D)\) in Section~\ref{sec:profile}.

For the proof of Theorem~\ref{thm:uniqueness}, consider two solutions as follows.
Let \(u_1,u_2\) be smooth solutions with the same centre and coordinate
reflections, with \(M(u_j)=m\) and \(E(u_j)=0\).  Suppose that each has,
on a positive-time tail, the profile decomposition above with
\begin{equation}\label{uniq:class}
 \begin{gathered}
 D_j(t)\longrightarrow\infty,\qquad
 \frac{b_j}{B(D_j)}=1+O(D_j^{-1}),\\
 \|w_j(D)\|_{k,\lambda}\le C_{j,k}\frac{B(D)}D
 \quad\text{for every fixed integer }k\ge0.
 \end{gathered}
\end{equation}
The constants and the starting sections of the two tails may differ.

The local modulation equations imply \(D_s=D(b-\mu)>0\) on a sufficiently
large tail in \eqref{uniq:class}.  We may therefore compare solutions at
their respective crossings of the same value of \(D\).  This comparison
does not identify their physical or renormalized clocks.

\subsection{Differentiating the energy and modulation constraints}

We first record the quantitative dependence of the chirp and modulation
parameters on the remainder.  All quantities in the next two lemmas are
dimensionless and are evaluated at a fixed \(D\).  Write
\[
 \mathcal N(v)=|v|^2v,\qquad
 \mathcal S(P)=(-\Delta+1)P-\mathcal N(P).
\]
The three tests are
\[
 T_1=Z_{D,b}(|y|^2Q),\qquad
 T_2=iZ_{D,b}\rho,\qquad
 T_3=iZ_{D,b}\Lambda Q.
\]
The exact mass and modulation constraints are
\begin{equation}\label{uniq:constraints}
 2\langle P,\varepsilon\rangle+\|\varepsilon\|_2^2=0,
 \qquad \langle\varepsilon,T_j\rangle=0\quad(1\le j\le3).
\end{equation}
Here and below all pairings are real.

At fixed \(D\), the abbreviations \(P_j=P_{D,b_j}\),
\(P_B=P_{D,B(D)}\), and \(T_j(B)=T_j(D,B(D))\) suppress only
the displayed profile parameters.

\begin{lemma}[Differentiated energy constraint]\label{uniq:energy-map}
Suppose \(b_j\in[B/2,2B]\), the pairs \((b_j,\varepsilon_j)\) satisfy
\eqref{uniq:constraints} and have zero energy, and
\(\|\varepsilon_j\|_{H^1}\le N_*\le K B/D\), where \(K\) is fixed.
For sufficiently large \(D\), depending on \(K\),
\begin{equation}\label{uniq:b-difference}
 |b_1-b_2|\le
 C\frac{N_*+R}{B}\|\varepsilon_1-\varepsilon_2\|_{H^1}.
\end{equation}
The zero-energy constraint is a smooth positive-branch graph over the
transverse remainder coordinates.
\end{lemma}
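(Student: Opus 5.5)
The plan is to work with the exact energy identity of Lemma~\ref{prof:energy-comparison} for each pair and subtract. For \(j=1,2\),
\[
 0=E_D(P_j)+\mathcal H_{P_j}(\varepsilon_j)+\langle r_{D,b_j},\varepsilon_j\rangle ,
\]
where \(P_j=P_{D,b_j}\). Subtracting the two identities, the left-hand difference \(E_D(P_1)-E_D(P_2)\) is written by the mean value theorem as \(\partial_bE_D(P_{D,\bar b})(b_1-b_2)\) for some \(\bar b\) between \(b_1\) and \(b_2\). Since \(b_j\in[B/2,2B]\subset[B_0/4,4B_0]\) for large \(D\), \eqref{prof:energy-derivative} bounds this derivative below by \(cB_0\asymp B\). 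It then suffices to show that the remaining differences are \(O((N_*+R)\|\varepsilon_1-\varepsilon_2\|_{H^1})\) plus a term of size \(o(B)|b_1-b_2|\), which can be absorbed.

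For the residual pairing, split
\[
 \langle r_{b_1},\varepsilon_1\rangle-\langle r_{b_2},\varepsilon_2\rangle
 =\langle r_{b_1},\varepsilon_1-\varepsilon_2\rangle+\langle r_{b_1}-r_{b_2},\varepsilon_2\rangle .
\]
The first piece is \(O(R\|\varepsilon_1-\varepsilon_2\|_{H^1})\) by the bound \(\|r\|\le CR\). The second is \(O(D^pe^{-D}N_*|b_1-b_2|)\) by the bound on \(\partial_br\), and this is \(o(B)|b_1-b_2|\).

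For the Hessian term, I view \(\mathcal H_P(\varepsilon)\) as a smooth function of \((b,\varepsilon)\) on the uniformly embedded \(H^1(\mathbb T_D^2)\). Its \(\varepsilon\)-derivative is \(\langle S_D'(P+\varepsilon)-S_D'(P),\cdot\rangle\), which has operator norm \(O(N_*)\) along the segment between \(\varepsilon_1\) and \(\varepsilon_2\). This gives \(O(N_*\|\varepsilon_1-\varepsilon_2\|_{H^1})\). Its \(b\)-derivative, \(\langle S_D'(P+\varepsilon)-S_D'(P)-S_D''(P)\varepsilon,\partial_bP\rangle\), is quadratic in \(\varepsilon\), so it is \(O(N_*^2)|b_1-b_2|\). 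This is \(o(B)|b_1-b_2|\) because \(N_*\le KB/D\); this is where largeness of \(D\) depending on \(K\) enters. Here \(\partial_bP\) has uniformly bounded \(H^1\) norm, since differentiating the chirp inserts the weight \(|y-Dk|^2/4\) on each localized image and \(\partial_ba\) is exponentially small by Lemma~\ref{prof:mass}. Collecting terms,
\[
 cB|b_1-b_2|\le C(N_*+R)\|\varepsilon_1-\varepsilon_2\|_{H^1}+o(B)|b_1-b_2| ,
\]
which gives \eqref{uniq:b-difference}.

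The main subtlety is that the comparison identity \eqref{prof:exact-energy-comparison} uses the constraints \eqref{uniq:constraints} at each \(b_j\) separately, and the tests \(T_j\) depend on \(b\). The subtraction above avoids differentiating the constraint surface, because each identity is exact on its own surface. For the final graph assertion, I would apply the implicit function theorem to \(G(b,\varepsilon)=E_D(P_{D,b}+\varepsilon)\) restricted to the surface. Parametrizing the surface by a transverse coordinate \(y\in W\), using Lemma~\ref{mod:projection} and the normal estimate \eqref{mod:normal-part} to solve for the finite-rank normal part, the derivative \(\partial_bG\) equals \(\partial_bE_D(P)\) plus \(O(N_*^2+RN_*)\)-type corrections, again by the above estimates. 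This stays \(\ge cB\), so \(b\) is a smooth function of \(y\) on the positive branch. The Lipschitz estimate is then consistent with differentiating this graph.
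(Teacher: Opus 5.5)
Your argument is correct and is essentially the paper's proof. You subtract the two exact identities \(0=E_D(P_j)+\mathcal H_{P_j}(\varepsilon_j)+\langle r_{b_j},\varepsilon_j\rangle\) and bound the Hessian and residual differences by \(CN_*X+CN_*^2|b_1-b_2|\) and \(CRX+CD^pe^{-D}N_*|b_1-b_2|\), with \(X=\|\varepsilon_1-\varepsilon_2\|_{H^1}\). You then absorb the \(|b_1-b_2|\) terms into the lower bound \(\partial_bE_D(P)\ge cB\).

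For the graph assertion, the paper fixes the chart at the reference value \(b=B(D)\). It takes the \(L^2\)-orthogonal complement of \(P_{D,B(D)}\) and the three tests there, and solves for finite-rank corrections. This sidesteps the \(b\)-dependence of the space \(W\), which your transverse coordinate \(y\in W\) would otherwise have to handle.
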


\begin{proof}
Parameter differentiation of the absolutely convergent image sums gives,
at every fixed order,
\begin{equation}\label{uniq:profile-derivatives}
 \|\partial_b^rP\|_{H^k}
 +\sum_{j=1}^3\|\partial_b^rT_j\|_{H^k}\le C_{k,r},\qquad
 |a_b|\le C|b|D^pe^{-D}.
\end{equation}
Further fixed derivatives of \(a\) are bounded by polynomial multiples
of \(e^{-D}\).  Each differentiation of an image phase inserts a
polynomial weight in the distance to its own centre.  The diagonal
integrals are uniformly bounded, and the differentiated cross integrals
retain their exponential decay.  We abbreviate the stationary remainder \(r_{D,b}\) by \(r_b\).
Its subscript records the parameter; differentiation is written
as \(\partial_b r_b\).
Consequently the remainder
\begin{equation}\label{uniq:static-remainder}
 r_b=\mathcal S(P)-\frac{a+b^2}{4}T_1-bT_3
\end{equation}
satisfies
\begin{equation}\label{uniq:r-derivatives}
 \|r_b\|_{H^k}\le C_kR,\qquad
 \|\partial_b r_b\|_{H^k}\le C_kD^{p_k}e^{-D}.
\end{equation}
For instance differentiation of the central term \(ab\) costs \(a\),
which has this latter size.  These estimates follow by differentiating
the profile identities themselves.

Let \(E_D\) denote the dimensionless energy, and let
\(\mathfrak H_P(\varepsilon)\) be its relative action,
\[
 \mathfrak H_P(\varepsilon)
 =\frac12\int(|\nabla\varepsilon|^2+|\varepsilon|^2)
 -\frac14\int\bigl(|P+\varepsilon|^4-|P|^4
                  -4\operatorname{Re}(|P|^2P\overline\varepsilon)\bigr).
\]
Equal mass and \eqref{uniq:constraints} give the exact identity
\begin{equation}\label{uniq:energy-identity}
 0=E_D(P)+\mathfrak H_P(\varepsilon)
                 +\langle r_b,\varepsilon\rangle.
\end{equation}
In particular the two leading terms removed in
\eqref{uniq:static-remainder} have zero pairing with the remainder.

The profile energy is even in \(b\).  Its diagonal chirp contribution is
\(b^2\int |y|^2(Q+a\rho)^2/8\); every other second \(b\) derivative
has exponentially small overlap coefficients.  Hence, with
\(F=C D^pe^{-D}\),
\begin{equation}\label{uniq:energy-b-derivative}
 \partial_bE_D(P)=2c_0b+O(|b|F)\ge cB.
\end{equation}
If \(X=\|\varepsilon_1-\varepsilon_2\|_{H^1}\), expansion of the
quartic relative action and the uniform embeddings on the expanding
tori give
\[
 |\mathfrak H_{P_1}(\varepsilon_1)
       -\mathfrak H_{P_2}(\varepsilon_2)|
 \le C N_*X+C N_*^2|b_1-b_2|,
\]
while \eqref{uniq:r-derivatives} gives
\[
 |\langle r_{b_1},\varepsilon_1\rangle
       -\langle r_{b_2},\varepsilon_2\rangle|
 \le CRX+FN_*|b_1-b_2|.
\]
Subtract \eqref{uniq:energy-identity}, use
\eqref{uniq:energy-b-derivative}, and absorb
\((CN_*^2+FN_*)|b_1-b_2|\) into \(cB|b_1-b_2|\).
This proves \eqref{uniq:b-difference} without interpolating between two
points of the nonlinear constraint surface.

For clarity, a common smooth chart for that surface can also be given
explicitly.  At the profile root \(b=B(D)\), let
\(h_0=P_B\) and \(h_j=T_j(B)\), and let \(X_D\) be their common
real \(L^2\) orthogonal complement.  Their Gram matrix is uniformly
invertible.  We give \(X_D\cap H^1(\mathbb T_D^2)\) the \(H^1\) norm.
Derivatives in \(\eta\) are real Fr\'echet derivatives on this
space. In \(D_\eta\varepsilon-I\), \(I\) is its inclusion into
\(H^1\); the unlabelled norms of scalar \(\eta\)-derivatives below
are the corresponding operator norms.
For \(\eta\in X_D\), write
\[
 \varepsilon(b,\eta)=\eta+\sum_{\ell=0}^3c_\ell(b,\eta)h_\ell
\]
and impose \eqref{uniq:constraints}.  The finite-dimensional implicit
function theorem gives
\[
 \varepsilon(b,0)=0,\quad
 \|\varepsilon\|_{H^1}\asymp\|\eta\|_{H^1},\quad
 \|\partial_b\varepsilon\|_{H^1}\le C\|\eta\|_{H^1},
\]
and
\[
 \|D_\eta\varepsilon-I\|_{H^1\to H^1}
 \le C(|b-B|+\|\eta\|_{H^1}).
\]
The corrections have uniformly smooth finite-rank profiles.  On this
chart the left side of \eqref{uniq:energy-identity}, denoted by
\(\mathcal G_D(b,\eta)\), satisfies
\[
 \partial_b\mathcal G_D
 =2c_0b+O(|b|F+N_*^2+FN_*),\qquad
 \|D_\eta\mathcal G_D\|\le C(N_*+R).
\]
It therefore selects a unique smooth branch with
\[
 |b(D,\eta)-B(D)|\le C\frac{N_*^2+RN_*}{B},\qquad
 \|D_\eta b(D,\eta)\|\le C\frac{N_*+R}{B}.
\]
Existence follows by changing the sign of \(\mathcal G_D\) across a
sufficiently large multiple of the first displayed interval; its
positive \(b\) derivative gives uniqueness.
\end{proof}

The energy constraint controls the quadratic phase. We next compare
the remaining modulation parameters using the finite system derived
in Section~\ref{sec:construction}.

\begin{lemma}[Differentiated modulation map]\label{uniq:modulation-map}
On the constraint surface the exact modulation parameters
\(\mathbf m=(\mu,\omega,\nu)\) are restrictions of a smooth map of
\((D,b,\varepsilon)\).  In the setting of
Lemma~\ref{uniq:energy-map},
\begin{equation}\label{uniq:modulation-difference}
 |\mathbf m_1-\mathbf m_2|
 \le C(B+N_*)\|\varepsilon_1-\varepsilon_2\|_{H^1}
       +C(N_*+F)|b_1-b_2|.
\end{equation}
Its pure-profile value \(\mathbf m^0(D,b)=\mathbf m(D,b,0)\) obeys
\begin{equation}\label{uniq:pure-modulation}
 |\mathbf m^0|\le CR,\qquad
 |\partial_D^r\partial_b^\ell\mathbf m^0|
 \le C_{r,\ell}D^{p_{r,\ell}}R
\end{equation}
for each fixed finite collection of derivatives.  Moreover
\begin{equation}\label{uniq:modulation-homogeneous}
 |\mathbf m-\mathbf m^0|
 \le C(B\|\varepsilon\|_{H^1}+\|\varepsilon\|_{H^1}^2).
\end{equation}
\end{lemma}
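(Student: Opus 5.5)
The plan is to read everything off the explicit finite system \eqref{mod:smooth-map} from the proof of Lemma~\ref{mod:modulation}. On the constraint surface, \(\mathbf m=A(D,b,\varepsilon)^{-1}F(D,b,\varepsilon)\). The matrix \(A\) of \eqref{mod:finite-matrix} and the numerator \(F\) of \eqref{mod:finite-numerator} are defined and smooth on a full \(H^1\) neighbourhood, without imposing the constraints. Smoothness follows because every entry is a pairing of \(\varepsilon\), or of a polynomial in \(\varepsilon\), with finitely many profiles built from \(Z_{D,b}\) and \(a(D,b)\). These profiles are smooth in \((D,b)\) by Lemma~\ref{prof:mass} and the absolutely convergent image sums. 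Moreover, \(A\) converges to the invertible matrix \eqref{mod:coordinate-matrix}. This gives the first assertion.

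For \eqref{uniq:modulation-difference} and \eqref{uniq:modulation-homogeneous}, I would invoke Corollary~\ref{mod:lipschitz} directly. Its hypotheses are \(b\in[B_0/4,4B_0]\) and \(\|\varepsilon\|_{H^1}\le N_*\ll1\), which are implied by \(b_j\in[B/2,2B]\), \(B=B_0(1+O(D^{-1}))\), and \(N_*\le KB/D\) for \(D\) large depending on \(K\). The derivative bounds \eqref{mod:map-derivatives}, integrated along the segment joining \((b_1,\varepsilon_1)\) to \((b_2,\varepsilon_2)\) in the smooth extension, give \eqref{uniq:modulation-difference} with \(F=CD^pe^{-D}\). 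No interpolation on the nonlinear constraint surface is needed, since the extension is defined on the whole ball. Setting \(\varepsilon_2=0\), \(b_2=b_1\) gives \eqref{uniq:modulation-homogeneous}.

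The remaining content is the derivative bound \eqref{uniq:pure-modulation}. Lemma~\ref{mod:modulation} already states \(|\partial_D^r\partial_b^\ell\mathbf m^0|\le CD^pe^{-D}\), and \(D^pe^{-D}=D^{p+1/2}R\), so this is the same statement after raising the exponent. For completeness I would recall why it holds. At \(\varepsilon=0\), \(\mathbf m^0=-A(D,b,0)^{-1}(\langle\mathcal E_0,iT_j\rangle)_j\), with \(\mathcal E_0\) given by \eqref{mod:exact-residual} with defects set to zero. Its parts are the mixed cubic sum, whose unfolded pairings and their fixed derivatives are bounded by \(D^pe^{-D}\) by Lemma~\ref{prof:products}; terms with factors \(a^2,a^3\); and \(a_s\)-terms involving \(Db\,a_D\) and \((a-b^2)a_b\), which are even smaller by Lemma~\ref{prof:mass}. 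The derivatives of \(A^{-1}\) are uniformly bounded.

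The main obstacle is the derivative bound for \(\mathbf m^0\). One must check that each \(D\)- or \(b\)-derivative of the chirps inserts only polynomial weights, so that the overlap integrals keep the exponential factor \(e^{-D}\) and lose no exponential decay. This is exactly the weighted-overlap argument of Lemma~\ref{prof:products}, so I expect it to reduce to citing that lemma.
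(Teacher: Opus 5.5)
Your proposal is correct and follows the paper's proof essentially step for step. You take the smooth extension $A^{-1}F$ of the finite system, which keeps the exact quadratic mass term. You obtain \eqref{uniq:modulation-difference} and \eqref{uniq:modulation-homogeneous} directly from Corollary~\ref{mod:lipschitz} after checking $[B/2,2B]\subset[B_0/4,4B_0]$ and $N_*\ll1$, and \eqref{uniq:pure-modulation} by differentiating the explicit pairings of $-\mathcal E_0$ at $\varepsilon=0$, where $D^pe^{-D}=D^{p+1/2}R$ converts the bound of Lemma~\ref{mod:modulation} into the stated form.
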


\begin{proof}
The exact finite system is given in
\eqref{mod:finite-matrix}--\eqref{mod:finite-numerator}.  Its uniformly
invertible matrix and explicit numerator define a smooth extension off
the constraint surface, as proved in Corollary~\ref{mod:lipschitz}.
Since \(B\asymp B_0\), that corollary gives
\eqref{uniq:modulation-difference} and
\eqref{uniq:modulation-homogeneous} directly.  In particular the
\(N_*|b_1-b_2|\) term is retained until the refined remainder estimate
makes it harmless.  The extension includes the exact quadratic mass
term; it is not obtained by replacing equal mass with linear
orthogonality to \(Q\).

At \(\varepsilon=0\), the numerator of the finite system consists only
of the projections of \(-\mathcal E_0\).  Its size is \(O(R)\).
Differentiating these explicit pairings and the inverse matrix gives
\eqref{uniq:pure-modulation}: each fixed parameter derivative of a
cross-image term costs only a fixed polynomial in \(D\), while its
exponential factor is unchanged.  The diagonal terms depend on \(D\)
through the mass correction and its derivatives, and have the same
polynomial-exponential bounds.  The differentiated estimates in
Section~\ref{sec:profile} justify these operations before taking any
size bound.  For a \(D\) derivative one unfolds the cell integrals;
equivalently, the derivative of an explicit moving profile is
\(I_D^{-1}\partial_D(I_Dg)\), with the physical coordinate fixed.
This differentiates localized functions about their fixed physical
image centres, and never applies a nonperiodic dilation operator to
an unknown remainder.
\end{proof}

\subsection{A quasistatic improvement of the remainder}

We next improve the estimate in \eqref{uniq:class}.  This step removes
the large clock coefficient that would remain in a direct comparison
at the original remainder scale.

Let \(g_0=V\) and \(g_j=I_DT_j\) for \(1\le j\le3\), and recall
\[
 W=\{v:\langle v,g_j\rangle=0, 0\le j\le3\},\qquad
 S=\operatorname{span}\{Jg_0,Jg_1,Jg_2,Jg_3\}.
\]
The projection \(\Pi\) has range \(W\) and kernel \(S\).  Set
\begin{equation}\label{uniq:operators}
 J=-i,\qquad
 \mathcal L=-\lambda^2\Delta+1-\lambda^2D\mathcal N(V),\qquad
 \mathcal A=\Pi J\mathcal L\Pi.
\end{equation}
In particular \(\Pi J=J\Pi^*\), and the quadratic form of
\(\mathcal L\) is uniformly positive on \(W\).

\begin{lemma}[Inverse on the constrained space]\label{uniq:inverse}
For every fixed integer \(q\ge0\), \(h\in W\cap H^q\) has a unique
preimage \(r\in W\cap H^{q+2}\) under \(\mathcal A\), with
\begin{equation}\label{uniq:inverse-bound}
 \|r\|_{q+2,\lambda}\le C_q\|h\|_{q,\lambda}.
\end{equation}
The inverse depends smoothly on the profile parameters.
\end{lemma}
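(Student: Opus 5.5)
We reduce invertibility of \(\mathcal A=\Pi J\mathcal L\Pi\) on \(W\) to the invertibility of the coercive self-adjoint form \(\mathcal L\) on \(W\), using \(\Pi J=J\Pi^*\). Given \(h\in W\), we seek \(r\in W\) with \(\Pi J\mathcal L r=h\). Since \(J\) is invertible with \(J^{-1}=-J\), the equation is equivalent to \(\Pi^*\mathcal L r=J^{-1}h\) modulo the adjoint relation. The cleanest formulation is variational. For \(r\in W\), the condition \(\Pi J\mathcal Lr=h\) means \(J\mathcal L r-h\in S=\ker\Pi\). Equivalently, \(\mathcal Lr-J^{-1}h\in J^{-1}S=\operatorname{span}\{g_0,\dots,g_3\}\), which is exactly the annihilator of \(W\) in the real pairing. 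So we must solve: find \(r\in W\) with
\[
 \langle\mathcal Lr,z\rangle=\langle J^{-1}h,z\rangle\qquad\text{for all }z\in W\cap H^1 .
\]

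Step one is existence and uniqueness in \(H^1\). By Lemma~\ref{mod:projection} (equivalently the linear assertion of Lemma~\ref{mod:coercivity}), the form \(\langle\mathcal L z,z\rangle\) is bounded above and below by \(\|z\|_{1,\lambda}^2\) on the closed subspace \(W\cap H^1\), with constants independent of \(\lambda\). Lax--Milgram then gives a unique \(r\in W\cap H^1\) with \(\|r\|_{1,\lambda}\le C\|J^{-1}h\|_{H^{-1}_\lambda}\le C\|h\|_2\). Uniqueness of the preimage among \(W\cap H^{q+2}\) follows from the same coercivity: if \(\mathcal Ar=0\) with \(r\in W\), the variational identity gives \(\langle\mathcal Lr,r\rangle=0\), so \(r=0\).

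Step two upgrades regularity and yields \eqref{uniq:inverse-bound}. Since \(r\in W\) and \(\mathcal A r=h\), the elliptic estimate \eqref{con:elliptic} of Lemma~\ref{con:graph} applies, but it presupposes \(r\in H^{q+2}\). I would first show this by writing \(\mathcal L r=J^{-1}h+\sum_j c_jg_j\). The coefficients \(c_j\) are determined by pairing with the dual basis of \(S\) and are bounded by \(C\|r\|_{1,\lambda}+C\|h\|_2\). The vectors \(g_j\) have uniformly bounded scaled norms of every order. The operator \(\mathcal L\) is \(-\lambda^2\Delta+1\) plus a potential bounded in every scaled \(W^{r,\infty}\) norm, so standard scaled elliptic regularity, inducted on \(q\), gives \(r\in H^{q+2}\) with \(\|r\|_{q+2,\lambda}\le C_q(\|h\|_{q,\lambda}+\|r\|_2)\). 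Combined with \(\|r\|_2\le C\|h\|_2\) from step one, this proves \eqref{uniq:inverse-bound}. The uniformity in \(\lambda\) is the only delicate point, and it is handled exactly as in Lemma~\ref{con:graph}: constant-coefficient multiplier estimates in scaled norms, with the potential terms absorbed by interpolation.

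Step three is smooth parameter dependence. \(\mathcal L\), \(\Pi\), and \(W\) depend smoothly on \((D,b)\), and by \eqref{con:operator-bounds} and \eqref{mod:projection-derivative} their parameter derivatives cost at most two derivatives with bounded coefficients. To avoid a varying domain, conjugate by the smooth family of isomorphisms \(\Pi(p)|_{W(p_0)}\colon W(p_0)\to W(p)\) near a base point \(p_0\). The inverse is then a smooth family of bounded operators, by the resolvent identity \(\partial(\mathcal A^{-1})=-\mathcal A^{-1}(\partial\mathcal A)\mathcal A^{-1}\) together with the loss-free bound from step two.

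I expect the main obstacle to be the identification of \(\mathcal A\) with the variational problem. One must check carefully that \(\ker\Pi=S\) corresponds under \(J^{-1}\) to the span of the \(g_j\), and hence to the annihilator of \(W\). This uses the symplectic direct sum and the identity \(\Pi J=J\Pi^*\) from Lemma~\ref{mod:projection}, and it is precisely what makes coercivity on \(W\) suffice.
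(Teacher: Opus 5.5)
Your proposal is correct and follows the paper's proof essentially step by step. The shared steps are the constrained variational problem on \(W\cap H^1\) solved by Lax--Milgram, the identification \(\mathcal Lr-J^{-1}h\in\operatorname{span}\{g_j\}\) with finite-dimensional control of the coefficients, scaled elliptic induction, and uniqueness from the vanishing of the positive form. The only difference is cosmetic and concerns smooth dependence: you conjugate by \(\Pi(p)|_{W(p_0)}\) and use the resolvent identity, whereas the paper differentiates the variational problem together with the four constraints.
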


\begin{proof}
Solve, on \(W\cap H^1\), the coercive variational problem
\[
 \langle\mathcal Lr,v\rangle=\langle J^{-1}h,v\rangle
 \qquad(v\in W\cap H^1).
\]
The right side is bounded on the scaled \(H^1\) space, so
Lax--Milgram gives \(\|r\|_{1,\lambda}\le C\|h\|_2\).
The weak equation implies
\[
 \mathcal Lr-J^{-1}h=\sum_{j=0}^3c_jg_j.
\]
Testing against the four smooth basis vectors, moving the derivatives
in \(\mathcal L\) onto those vectors, and inverting their Gram matrix
gives \(|c|\le C(\|r\|_2+\|h\|_2)\).
The scaled elliptic estimate for \(-\lambda^2\Delta+1\), followed
by induction through the uniformly smooth potential, proves
\eqref{uniq:inverse-bound}.  Applying \(\Pi J\) to the weak equation
gives \(\mathcal Ar=h\), since \(Jg_j\in\ker\Pi\).
Conversely a kernel vector satisfies
\(\mathcal Lr\in W^\perp\), and hence has zero positive quadratic
form.  It must vanish.  Differentiating the variational problem together
with the four constraints proves smooth dependence, with the same
elliptic estimates at each fixed order.
\end{proof}

The inverse lets us remove the leading, slowly varying part of the
source. The remainder then gains a second exponential factor.

\begin{proposition}[Improved outgoing remainder]\label{uniq:refined}
Every solution in the class \eqref{uniq:class} satisfies
\begin{equation}\label{uniq:two-exponential}
 \|w(D)\|_{k,\lambda}\le C_kD^{p_k}B(D)^2
 \qquad\text{for every fixed integer }k\ge0
\end{equation}
on a sufficiently large tail.
\end{proposition}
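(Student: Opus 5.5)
We subtract a quasistatic correction built from Lemma~\ref{uniq:inverse} and then run the graph-energy argument of Lemma~\ref{con:energy-evolution} backwards from \(D=\infty\). The target is a remainder whose source is \(O(D^pB^3)\) rather than \(O(B^2)\).

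\textbf{The source.} In the projected equation \eqref{con:projected}, \(y_s=\mathcal Ay+\Pi_sy+g\), the leading part of \(g\) is \(-\Pi Jf\). Here \(f\) is the full residual of Lemma~\ref{mod:modulation}, with the affine form \eqref{mod:affine-residual}. On the class \eqref{uniq:class}, Lemma~\ref{uniq:modulation-map} gives \(\mathbf m=\mathbf m^0(D,b)+O(B^2/D)\), and Lemma~\ref{uniq:energy-map} (in the form \eqref{prof:branch-comparison}) gives \(b=B(D)+O(B/D^2)\). So, up to \(O(D^pB^3)\) in every \(H^k_\lambda\), the source is
\[
g_*(D)=-\Pi J I_D\bigl(\mathcal E_0+\mathbf m^0\cdot(R_\mu,R_\omega,R_\nu)\bigr)\big|_{b=B(D)},
\]
an explicit smooth function of \(D\) of size \(O(R)\asymp B^2\). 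The term \(\Pi_sn\), the term \(\Pi J\mathcal Ln\) (via \eqref{mod:normal-invariance} and \eqref{con:normal}), the nonlinear remainder \(\lambda^2\mathcal R\) and \(\omega\Pi Jw\) are all \(O(B^3/D^{2})\) or \(O(B^2\|w\|)\) at each fixed order.

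\textbf{The correction.} Set \(r(D)=-\mathcal A^{-1}g_*(D)\in W\) using Lemma~\ref{uniq:inverse}, so \(\|r\|_{k,\lambda}\le C_kR\asymp B^2\). Put \(\tilde y=y-r\). Then
\[
\tilde y_s=\mathcal A\tilde y+\Pi_s\tilde y+\tilde g,\qquad \tilde g=(g-g_*)+\Pi_s r-r_s .
\]
The term \(r_s\) is the only delicate one. Write \(r_s=D_s\,\partial_Dr+(\text{parameter-motion terms})\). Here \(D_s\asymp DB\), and a \(D\)-derivative of \(\mathcal A^{-1}g_*\) costs only a polynomial in \(D\), by smooth dependence in Lemma~\ref{uniq:inverse} and the derivative bounds \eqref{uniq:pure-modulation}, \eqref{prof:B-asymptotic}. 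The time derivatives of \(\lambda\) and \(b\) are \(O(B)\). Hence \(\|r_s\|_{k,\lambda}+\|\Pi_sr\|_{k,\lambda}\le C_kD^pB^3\), and \(\tilde g=O(D^pB^3)\) plus terms linear in \(w\) with coefficients \(O(B)\). In \(\tilde g\), \(b\) is replaced by \(B(D)\) and \(\mathbf m\) by \(\mathbf m^0\), and the differences of the coefficients are controlled by Lemmas~\ref{uniq:energy-map} and~\ref{uniq:modulation-map}. Because \(\tilde y\in W\), the same cancellation \(\langle\mathcal L z,\mathcal Az\rangle=0\) applies to the graph energies \(\tilde X_j^2=\sum_{a\le j}\langle\mathcal L\mathcal A^a\tilde y,\mathcal A^a\tilde y\rangle\).

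\textbf{Closing the estimate.} Following \eqref{con:form-derivative} and dividing by \(D_s\asymp DB\), we obtain
\[
|(\tilde X_j)_D|\le C_jD^{-1}\bigl(\tilde X_j+B^2\bigr)\cdot O(1)+C_jD^{p}B^2 .
\]
Here the \(w\)-linear coefficient \(O(B)\), divided by \(DB\), yields \(O(1/D)\); \(w\) itself is expressed through \(\tilde y+r+n\). Unlike the construction there is no terminal value at finite \(D\). Instead, the class hypothesis gives \(\tilde X_j(D)\le C B/D\to0\) as \(D\to\infty\). A backward Gronwall from \(D'\to\infty\), with the integrating factor \((D'/D)^{C}\), kills the boundary term because \(B(D')D'^{C}\to0\). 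The tail bound \eqref{con:tail}, applied with \(B^2\) in place of \(B\), then gives \(\tilde X_j(D)\le C_jD^pB(D)^2\).

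Combined with \(\|r\|\lesssim B^2\) and \eqref{con:normal} (\(\|n\|\lesssim\|w\|_2^2\)), this yields \eqref{uniq:two-exponential} at every odd order; interpolation handles the rest.

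The main obstacle is the bookkeeping in \(\tilde g\). Every term must genuinely carry an extra factor \(B\), uniformly at each fixed order, and this is most delicate for \(r_s\) and for the \(N_*|b_1-b_2|\)-type coefficient errors. If a linear-in-\(w\) term only had an \(O(B)\) bound with a large constant, the exponent \(C\) in \((D'/D)^C\) would grow. That is harmless here, since only polynomial losses are incurred against exponential decay.
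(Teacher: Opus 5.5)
Your proposal is correct and is essentially the paper's argument. You remove the quasistatic correction $-\mathcal A^{-1}$ applied to the pure-profile source (with $\mathbf m$ replaced by $\mathbf m^0$), run the constrained graph energy of Lemma~\ref{con:energy-evolution} on the corrected remainder, divide by $D_s\asymp DB$, and integrate backwards from $D=\infty$ with the class bound $B/D$ as the vanishing boundary term.

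The paper differs only in two details and one piece of bookkeeping. It keeps the source at the actual $b$, so your passage to $b=B(D)$ is unnecessary. It handles $r_s$ through the covariant identity $\Pi r^0_s=-\mathcal A^{-1}\Pi(g^0_s+\mathcal A_s r^0)$, with $(I-\Pi)r^0_s=\Pi_s r^0$ cancelling exactly. Finally, $\mathbf m-\mathbf m^0$ and the nonlinear Taylor remainder are only $O(B^2/D)$ in absolute size. They must therefore be counted among the terms bounded by $C_qB\|w\|_{q,\lambda}$, as in \eqref{uniq:g-homogeneous}, rather than in the $O(D^pB^3)$ error; you do this in your closing step, but your first paragraph states otherwise.
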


\begin{proof}
Write \(y=\Pi w\) and \(n=(I-\Pi)w\).  Exact mass gives
\(\|n\|_{q,\lambda}\le C_q\|w\|_2^2\) for every fixed \(q\).
The projected equation from Section~\ref{sec:construction} is
\begin{equation}\label{uniq:projected-single}
 y_s=\mathcal Ay+\Pi_s y+g,
\end{equation}
where, writing
\(\mathcal R_N(V,w)=\mathcal N(V+w)-\mathcal N(V)-D\mathcal N(V)w\),
\begin{equation}\label{uniq:single-g}
 g=\Pi_s n+\Pi J\mathcal Ln+\omega\Pi Jw
       -\lambda^2\Pi J\mathcal R_N(V,w)-\Pi Jf.
\end{equation}
Here \(f=I_D\mathcal E\) is the full scaled physical profile residual,
including the actual modulation defects.  Each term in \(g\) belongs
to \(W\).

Replace \(\mathbf m\) in this residual by
\(\mathbf m^0(D,b)\) from Lemma~\ref{uniq:modulation-map}, and call
the resulting explicit function \(f^0\).  Define
\[
 g^0(D,b)=-\Pi Jf^0(D,b)\in W.
\]
The differentiated profile estimates and
\eqref{uniq:pure-modulation} give
\begin{equation}\label{uniq:g0-bounds}
 \|g^0\|_{q,\lambda}\le C_qB^2,\qquad
 \|\partial_Dg^0\|_{q,\lambda}
       +\|\partial_bg^0\|_{q,\lambda}\le C_qD^{p_q}B^2.
\end{equation}
These derivatives are in fixed physical coordinates.  This defines an
algebraic pure-profile source; it does not assert that \(w=0\) is a
solution.

If \(N_q=\|w\|_{q,\lambda}\), then for each fixed \(q\ge3\),
\begin{equation}\label{uniq:g-homogeneous}
 \|g-g^0\|_{q,\lambda}\le C_q B N_q.
\end{equation}
Indeed the two normal terms in \eqref{uniq:single-g} are bounded by
\(C_q(1+\kappa)N_1^2\), where
\(\kappa=|\lambda_s/\lambda|+|b_s|\lesssim B\).
Only derivatives of smooth finite-rank vectors are used there.
The cubic Taylor remainder is bounded by
\(C_q(N_3+N_3^2)N_q\), using the actual bound \(N_3\lesssim B/D\).
The phase term is \(O(B^2N_q)\).  Finally
\eqref{uniq:modulation-homogeneous} gives
\(\|f-f^0\|_{q,\lambda}\lesssim_q BN_1+N_1^2\).
Together these prove \eqref{uniq:g-homogeneous}; in particular no
forcing term independent of \(w\) is left in that estimate.

Define the quasistatic correction
\[
 r^0=-\mathcal A^{-1}g^0\in W.
\]
Lemma~\ref{uniq:inverse} gives
\(\|r^0\|_{q,\lambda}\le C_qB^2\) at every fixed order.
Its derivative must account for the moving constraint space.
Since \(r^0=\Pi r^0\),
\[
 (I-\Pi)r_s^0=\Pi_s r^0.
\]
Differentiate \(\mathcal Ar^0=-g^0\), apply \(\Pi\), and use
\(\mathcal A\Pi_s r^0=0\).  This gives the exact covariant formula
\begin{equation}\label{uniq:covariant-correction}
 \Pi r_s^0
 =-\mathcal A^{-1}\Pi(g_s^0+\mathcal A_s r^0).
\end{equation}
The old outgoing estimates imply
\[
 |D_s|\le CDB,\qquad |b_s|\le CDB^2,\qquad
 \|\mathcal A_s h\|_{q,\lambda}
       \le C_q\kappa\|h\|_{q+2,\lambda}.
\]
Thus \eqref{uniq:g0-bounds} and \eqref{uniq:covariant-correction}
give
\begin{equation}\label{uniq:correction-s}
 \|\Pi r_s^0\|_{q,\lambda}\le C_qD^{p_q}B^3.
\end{equation}
Only derivatives of the explicit profile and the already controlled
parameters occur; no small time derivative of \(w\) is being assumed.

Set \(z=y-r^0\).  Equations
\eqref{uniq:projected-single} and \eqref{uniq:covariant-correction}
yield
\[
 z_s=\mathcal Az+\Pi_s z+\widetilde g,\qquad
 \widetilde g=g-g^0-\Pi r_s^0\in W,
\]
with
\[
 \|\widetilde g\|_{q,\lambda}
 \le C_qB\|z\|_{q,\lambda}+C_qD^{p_q}B^3.
\]
For odd \(q\ge3\), use the constrained graph energy
\[
 X_q^2=\sum_{j=0}^{(q-1)/2}
           \langle\mathcal L\mathcal A^jz,\mathcal A^jz\rangle
       \asymp_q\|z\|_{q,\lambda}^2.
\]
The graph-energy identity proved in Section~\ref{sec:construction},
including its moving-projection connection, gives
\[
 |(X_q)_s|\le C_qB X_q+C_qD^{p_q}B^3.
\]
Since \(D_s\asymp DB>0\),
\begin{equation}\label{uniq:quasistatic-gronwall}
 |(X_q)_D|\le\frac{C_q}{D}X_q+C_qD^{p_q-1}B^2.
\end{equation}
At an arbitrary distant section \(D_2\), the existing infinite-orbit
estimate \eqref{uniq:class} gives
\[
 X_q(D_2)\le C_qB(D_2)/D_2.
\]
Backward integration of
\eqref{uniq:quasistatic-gronwall} therefore yields
\[
 X_q(D)\le (D_2/D)^{C_q}X_q(D_2)
   +C_q\int_D^{D_2}(r/D)^{C_q}r^{p_q-1}B(r)^2\,dr.
\]
The boundary term vanishes as \(D_2\to\infty\).  The asymptotic
formula for \(B\) gives
\[
 \frac{B(r)^2}{B(D)^2}
 \le C(r/D)^{-1/2}e^{-(r-D)}\qquad(r\ge D\gg1),
\]
so the integral is bounded by a finite polynomial in \(D\) times
\(B(D)^2\).  Adding \(r^0\), and then the quadratic normal component,
proves \eqref{uniq:two-exponential} at odd orders.  Every remaining
fixed order follows from the next odd order.

This argument uses the actual infinite-orbit boundary estimate.  At a
finite terminal section where \(w=0\), the corrected remainder is
\(z=-r^0\), and no zero boundary condition for \(z\) has been used.
\end{proof}

\subsection{Comparison at a common scale}

Write \(t_j(D)\) for the inverse of \(D_j(t)\) on its increasing
outgoing tail. Writing a parameter or remainder as a function of
\(D\) means composition with this inverse.

Let the two solutions in Theorem~\ref{thm:uniqueness} now be evaluated
at their own crossings of a common sufficiently large scale:
\[
 z_j(D)=e^{-i\gamma_j(t_j(D))}u_j(t_j(D))
       =V_{D,b_j}+w_j,
 \qquad d=w_1-w_2,
 \qquad v=z_1-z_2.
\]
At the common scale, write \(V_j=V_{D,b_j(D)}\),
\(g_{0,j}=V_j\), and
\(g_{\ell,j}=I_DT_\ell(D,b_j(D))\) for \(1\le\ell\le3\).
The spaces and operators \(W_j,S_j,\Pi_j,\mathcal L_j,\mathcal A_j\)
are those defined above, evaluated at \((D,b_j(D))\), with
\(\lambda=L/D\). Their \(D\)-derivatives are total derivatives
along this curve at fixed physical coordinate \(x\).
Put \(\delta b=b_1-b_2\) and use \(\delta\) for other differences.
Proposition~\ref{uniq:refined} and Lemmas
\ref{uniq:energy-map}--\ref{uniq:modulation-map} imply
\begin{equation}\label{uniq:slaving}
 |\delta b|+|\delta\mathbf m|
 \le CD^pB\|d\|_{1,\lambda}.
\end{equation}
Since \(\|V_{D,b_1}-V_{D,b_2}\|_{k,\lambda}\le C_k|\delta b|\),
absorption gives
\begin{equation}\label{uniq:d-v}
 \|d\|_{1,\lambda}\asymp\|v\|_{1,\lambda},\qquad
 |\delta b|+|\delta\mathbf m|
 \le CD^pB\|v\|_{1,\lambda}.
\end{equation}
In absolute size \(|\delta b|\le CD^pB^3\), and therefore
\begin{equation}\label{uniq:individual-v}
 \|v\|_{k,\lambda}\le C_kD^{p_k}B^2
\end{equation}
for every fixed order needed below.

Project the full-field difference using the first orbit:
\[
 \xi=\Pi_1v,\qquad n=(I-\Pi_1)v.
\]
Equal mass of \(z_1,z_2\) gives exactly
\begin{equation}\label{uniq:mass-difference}
 \langle v,V_1\rangle
       =\tfrac12\|v\|_2^2-\langle v,w_1\rangle.
\end{equation}
For the three other tests,
\begin{equation}\label{uniq:test-difference}
 \langle v,g_{\ell,1}\rangle
 =\langle V_1-V_2,g_{\ell,1}\rangle
  -\langle w_2,g_{\ell,1}-g_{\ell,2}\rangle
 \qquad(1\le\ell\le3).
\end{equation}
Finite-rank reconstruction, \eqref{uniq:d-v}, and
\eqref{uniq:individual-v} consequently give
\begin{equation}\label{uniq:normal-difference}
 \|n\|_{k,\lambda}\le C_kD^{p_k}B\|v\|_{1,\lambda},\qquad
 \|v\|_{1,\lambda}\asymp\|\xi\|_{1,\lambda}.
\end{equation}
The normal component here is the normal component of a difference of
full fields; it has not been replaced by the quadratic normal component
of a single remainder.

Two algebraic cancellations are needed before estimating its evolution.
First, the entire limiting normal space is invariant under the
Euclidean Hamiltonian generator.  In the ordered basis
\[
 -iQ,\quad-i|y|^2Q,\quad\rho,\quad\Lambda Q,
\]
the images under \(J\mathcal L_\infty\) are respectively
\begin{equation}\label{uniq:jordan-chain}
 0,\quad4\Lambda Q,\quad-i|y|^2Q/4,\quad2iQ.
\end{equation}
These follow from
\(L_-Q=0\), \(L_-(|y|^2Q)=-4\Lambda Q\),
\(L_+\rho=|y|^2Q/4\), and \(L_+\Lambda Q=-2Q\).
Apply these identities to each phase-consistent localized image.
The chirp changes their left sides by \(O_k(B)\), and the mass
correction and mixed image products by \(O_k(D^{p_k}B^2)\).
Uniform invertibility of the four-dimensional basis then gives
\begin{equation}\label{uniq:normal-leakage}
 \|\Pi_1J\mathcal L_1h\|_{k,\lambda}
 \le C_kB\|h\|_2\qquad(h\in\ker\Pi_1).
\end{equation}
This is a smoothing finite-rank estimate, so it requires no derivatives
of \(h\).

Second, define the physical scaled vector field
\[
 F_\lambda(z)=J(-\lambda^2\Delta z+z-\lambda^2\mathcal N(z)).
\]
The two leading terms in \eqref{uniq:static-remainder} are annihilated
by \(\Pi J\).  It follows that
\(\|\Pi_jF_\lambda(V_j)\|_{k,\lambda}\le C_kB^2\).
Moreover
\[
 \|\Pi_1-\Pi_2\|_{L^2\to H^k_\lambda}\le C_k|\delta b|.
\]
Using \(\|F_\lambda(V_2)\|_{k,\lambda}\le C_kB\) and the improved
known remainder bounds, we obtain
\begin{align}
 \|\Pi_1F_\lambda(z_2)\|_{1,\lambda}
 &\le C\bigl(B^2+B|\delta b|+\|w_2\|_{3,\lambda}
                  +\|w_2\|_{3,\lambda}^2+\|w_2\|_{3,\lambda}^3\bigr)
 \le CD^pB^2.\label{uniq:projected-field}
\end{align}
The two derivatives fall only on the known second remainder.
The corresponding phase cancellation is
\begin{equation}\label{uniq:phase-cancellation}
 \Pi_1JV_1=0,
 \qquad \|\Pi_1Jz_2\|_{1,\lambda}
       \le C(\|w_2\|_{1,\lambda}+|\delta b|)
       \le CD^pB^2.
\end{equation}

We now write the exact equation, retaining the two different clocks.
For each orbit set
\[
 \kappa_j=\frac1{D(b_j-\mu_j)}.
\]
Its phase-removed field satisfies
\begin{equation}\label{uniq:common-scale}
 (z_j)_D=\kappa_j\{F_\lambda(z_j)+\omega_jJz_j\},\qquad
 (t_j)_D=\lambda^2\kappa_j,
 \qquad(\gamma_j)_D=(1+\omega_j)\kappa_j.
\end{equation}
In particular
\begin{equation}\label{uniq:clock-difference}
 \delta\kappa
 =-\frac{\delta b-\delta\mu}
          {D(b_1-\mu_1)(b_2-\mu_2)},\qquad
 |\delta\kappa|\le\frac{CD^p}{DB}\|\xi\|_{1,\lambda}.
\end{equation}
The denominator in this identity is of order \(DB^2\).

Let
\[
 \mathcal T=\mathcal N(z_1)-\mathcal N(z_2)
                         -D\mathcal N(V_1)v.
\]
After applying \(I_D^{-1}\), we use the uniform embedding
\(H^3(\mathbb T_D^2)\hookrightarrow W^{1,\infty}(\mathbb T_D^2)\).
The dimensionless fields \(I_D^{-1}(z_j-V_1)\) have size
\(D^pB^2\) in that space, by \eqref{uniq:two-exponential} and
\eqref{uniq:slaving}.  Expanding the cubic difference in these
variables, and using the identity
\(I_D^{-1}(\lambda^2\mathcal N(z))=\mathcal N(I_D^{-1}z)\), gives
\begin{equation}\label{uniq:cubic-difference}
 \|\lambda^2\mathcal T\|_{1,\lambda}
       \le CD^pB^2\|v\|_{1,\lambda}.
\end{equation}
Only \(H^1\) of the unknown difference is used in this inequality.

Subtract \eqref{uniq:common-scale} and differentiate \(\Pi_1v\).
With \(\mathcal A_1=\Pi_1J\mathcal L_1\Pi_1\), the result is
\begin{equation}\label{uniq:projected-difference}
 \xi_D=\kappa_1\mathcal A_1\xi+(\Pi_1)_D\xi+G,
\end{equation}
where
\begin{align}
 G={}&(\Pi_1)_Dn+\kappa_1\Pi_1J\mathcal L_1n
      -\kappa_1\lambda^2\Pi_1J\mathcal T
      +\kappa_1\omega_1\Pi_1Jv\notag\\
    &+\delta\kappa\,\Pi_1\{F_\lambda(z_2)+\omega_2Jz_2\}
      +\kappa_1\delta\omega\,\Pi_1Jz_2.
 \label{uniq:all-difference-terms}
\end{align}
Every term of \(G\) lies in \(W_1\).  For the first term this follows
from the derivative of \(\Pi_1^2=\Pi_1\), since \(\Pi_1n=0\).
The separate connection \((\Pi_1)_D\xi\) in
\eqref{uniq:projected-difference} has not been dropped.

Along an individual orbit,
\[
 (\log\lambda)_D=-D^{-1},\qquad
 b_D=\frac{a-b^2+2b\mu+\nu}{D(b-\mu)}=O(B).
\]
Parameter differentiation of the explicit physical profiles therefore
gives
\begin{equation}\label{uniq:operator-D}
 \|(\Pi_1)_Dh\|_{1,\lambda}\le\frac CD\|h\|_2,
 \qquad
 |\langle(\mathcal L_1)_Dh,h\rangle|
       \le\frac CD\|h\|_{1,\lambda}^2.
\end{equation}
All terms in \eqref{uniq:all-difference-terms} can now be estimated:
\begin{equation}\label{uniq:G-bound}
 \|G\|_{1,\lambda}
       \le\frac{CD^pB}{D}\|\xi\|_{1,\lambda}.
\end{equation}
For completeness, the first term uses
\eqref{uniq:normal-difference} and \eqref{uniq:operator-D}.
The second uses the extra \(B\) in
\eqref{uniq:normal-leakage}, together with
\(|\kappa_1|\lesssim(DB)^{-1}\).
The cubic term follows from \eqref{uniq:cubic-difference}, and the
term containing \(\omega_1v\) uses \(|\omega_1|\lesssim B^2\).
The clock term is controlled by the product
\[
 |\delta\kappa|\,\|\Pi_1F_\lambda(z_2)\|_{1,\lambda}
       \le\frac{CD^pB}{D}\|\xi\|_{1,\lambda},
\]
using \eqref{uniq:projected-field} before estimating the clock
difference.  Its remaining phase factor and the last term in
\eqref{uniq:all-difference-terms} are controlled by
\eqref{uniq:phase-cancellation} and \eqref{uniq:d-v}.
Thus every polynomial prefactor left by the refined individual bounds
still has a factor \(B\).

\subsection{The homogeneous estimate and the normalized terminal limit}

\begin{proof}[Proof of Theorem~\ref{thm:uniqueness}]
On the common large-scale tail define the positive difference energy
\[
 \mathfrak D(D)=\langle\mathcal L_1\xi,\xi\rangle
               \asymp\|\xi\|_{1,\lambda}^2.
\]
Since \((\Pi_1J)^*=-\Pi_1J\),
\[
 \langle\mathcal L_1\xi,\mathcal A_1\xi\rangle=0.
\]
Differentiating the energy with
\eqref{uniq:projected-difference} therefore gives exactly
\[
 \mathfrak D_D
 =\langle(\mathcal L_1)_D\xi,\xi\rangle
 +2\langle\mathcal L_1\xi,(\Pi_1)_D\xi+G\rangle.
\]
The scalar clock coefficient \(\kappa_1\) has canceled without being
differentiated.  The pairings are the continuous \(H^1\) quadratic
form pairings.  Smoothness of the solutions justifies the displayed
identities directly; the estimates depend only on the stated low-order
norms of the difference.

Form continuity, \eqref{uniq:operator-D}, and \eqref{uniq:G-bound}
imply, after increasing the starting scale if necessary,
\begin{equation}\label{uniq:homogeneous-energy}
 |\mathfrak D_D|\le\frac{C_*}{D}\mathfrak D.
\end{equation}
Hence for \(D_2\ge D_1\) on that tail,
\[
 \mathfrak D(D_1)\le (D_2/D_1)^{C_*}\mathfrak D(D_2).
\]
There is no division by a possibly vanishing difference norm in this
argument.  By \eqref{uniq:individual-v},
\(\mathfrak D(D_2)\le C D_2^pB(D_2)^4\).
Letting \(D_2\to\infty\) proves \(\mathfrak D=0\), and
\eqref{uniq:normal-difference} gives \(\xi=v=0\).
Equation \eqref{uniq:d-v} then yields \(\delta b=0\) and
\(d=0\); the exact modulation map gives
\(\delta\mu=\delta\nu=\delta\omega=0\).

The scalar identities in \eqref{uniq:common-scale} now imply
\[
 (t_2-t_1)_D=0,\qquad(\gamma_1-\gamma_2)_D=0.
\]
Call these constants \(\tau\) and \(\theta\), respectively.
Since \(z_1=z_2\),
\begin{equation}\label{uniq:phase-time}
 u_1(t_1(D))=e^{i\theta}u_2(t_1(D)+\tau).
\end{equation}
Ordinary uniqueness of the smooth Cauchy problem extends this identity
wherever both sides exist, proving Theorem~\ref{thm:uniqueness}.
\end{proof}

Uniqueness also removes the subsequence used in the construction.
The time and phase normalizations at the initial scale are essential
for this conclusion.

\begin{corollary}[Convergence of the normalized terminal family]
\label{uniq:full-sequence}
Normalize the finite-terminal solutions of
Section~\ref{sec:construction} by assigning physical time zero to their
common outgoing scale section \(D_0\), and by making their modulation
phase zero at that section.  The entire resulting sequence of initial
data converges in every fixed \(H^k\) to one smooth datum.  No
subsequence selection remains in the normalized construction.
\end{corollary}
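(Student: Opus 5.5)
The plan is the standard argument: every subsequence of the normalized initial data has a further subsequence converging to a datum whose solution lies in the uniqueness class. Theorem~\ref{thm:uniqueness} then forces all these limits to coincide, and the full sequence converges.

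First, take any subsequence of the normalized data \(u_n(0)\). By Proposition~\ref{con:all-orders}, the sequence is bounded in every \(H^k\). A diagonal extraction, followed by interpolation as in the passage to one smooth orbit, gives a further subsequence converging in every fixed \(H^k\) to some smooth \(u_0\). The argument of Section~\ref{sec:construction} shows that the limiting solution \(u\) belongs to the class \eqref{uniq:class}. It has mass \(m\) and energy zero, it is reflection even, and it has \(D(0)=D_0\) and \(\gamma(0)=0\). Its parameters satisfy \(b/B=1+O(D^{-2})\) and \(\|w\|_{k,\lambda}\le C_kB/D\), and the transfer from \(B\) to \(B_0\) uses \eqref{prof:B-asymptotic}. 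The parameters at the section are the limits of the parameters of \(u_n\), because the modulation coordinates of Lemma~\ref{mod:coordinates} depend continuously on the function in scaled \(H^1\).

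Second, let \(u^{(1)},u^{(2)}\) be two such subsequential limits. By Theorem~\ref{thm:uniqueness}, \(u^{(1)}(t)=e^{i\theta}u^{(2)}(t+\tau)\). I now need \(\theta=\tau=0\), and the step I expect to be delicate is relating the normalization to the uniqueness constants. In the proof of Theorem~\ref{thm:uniqueness}, \(\tau\) and \(\theta\) are the constant values of \(t_2(D)-t_1(D)\) and \(\gamma_1(D)-\gamma_2(D)\) along the common outgoing tail. Both solutions cross \(D=D_0\) at \(t=0\) with phase \(0\). If the tail of the uniqueness argument already reaches down to \(D_0\), then \(\tau=\theta=0\) at once.

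If the uniqueness tail starts at some larger scale \(D_1\), I would use the exact identity \(u^{(1)}=e^{i\theta}u^{(2)}(\cdot+\tau)\) instead. This identity holds for all times, and the modulation decomposition is unique near the profile by Lemma~\ref{mod:coordinates}. The two orbits therefore have identical parameter curves as functions of \(D\) after the time shift, including on \([D_0,D_1]\), where Proposition~\ref{con:terminal-family} keeps both solutions in the modulation regime with \(D_s>0\). So \(D\mapsto t\) is injective there, and the crossing of \(D_0\) is unique on that monotone branch. Hence \(\tau=0\), since both cross \(D_0\) at time \(0\). Similarly the modulation phases differ by exactly \(\theta\) modulo \(2\pi\), and both vanish at that section, so \(\theta\equiv0\). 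This gives \(u^{(1)}(0)=u^{(2)}(0)\).

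Third, since every subsequence has a further subsequence converging in each fixed \(H^k\) to the same datum \(u_0\), the entire normalized sequence converges to \(u_0\) in every \(H^k\). One minor point needs care: the crossing time of \(D_0\) must be well defined for each \(u_n\). This holds because \(D_t>0\) on the whole interval \([D_0,D_n]\) by Proposition~\ref{con:terminal-family}.
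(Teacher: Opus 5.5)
Your architecture matches the paper's: precompactness from Proposition~\ref{con:all-orders}, membership of every subsequential limit in \eqref{uniq:class}, Theorem~\ref{thm:uniqueness}, elimination of \(\tau,\theta\) by the normalization, and the subsequence principle. The gap is in the step you flag as delicate. Lemma~\ref{mod:coordinates} gives uniqueness only among parameters \emph{near} a reference triple. The identity \(u^{(1)}(t)=e^{i\theta}u^{(2)}(t+\tau)\) produces two admissible decompositions of one function. One has parameters \((D^{(1)}(t),\gamma^{(1)}(t),b^{(1)}(t))\), the other \((D^{(2)}(t+\tau),\gamma^{(2)}(t+\tau)+\theta,b^{(2)}(t+\tau))\), and each has small remainder. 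Nothing you say excludes that these differ by a non-small amount in \(\log\lambda\), or in the phase modulo \(2\pi\); in that case local uniqueness is silent. So ``the two orbits therefore have identical parameter curves \dots\ including on \([D_0,D_1]\)'' is exactly what must be proved, not a consequence of the lemma.

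The paper proves this at time zero. After exchanging the solutions so that \(\tau>0\), the function \(u^{(1)}(0)=e^{i\theta}u^{(2)}(\tau)\) has representations at scales \(D_0\) and \(D_2=D^{(2)}(\tau)>D_0\). The tube asymptotics \(\|\nabla u\|_2=(D/L)(\|\nabla Q\|_2+o_{D_0\to\infty}(1))\) force \(D_2/D_0=1+o(1)\). The \(L^2\) overlap with the profile makes the two phases close modulo \(2\pi\). Hence both triples lie in one uniform implicit-function neighbourhood, and local uniqueness gives \(D_2=D_0\), a contradiction.

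You could instead repair your version by a continuation argument. The two parameter curves agree on the tail, by the proof of Theorem~\ref{thm:uniqueness}, and both are continuous in \(t\), with continuous phase lifts. The set of \(t\ge0\) where they agree is therefore closed. It is open by the uniform neighbourhood in Lemma~\ref{mod:coordinates}. Connectedness gives agreement at \(t=0\), so \(D^{(2)}(\tau)=D_0\) and \(\tau=0\).

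Either route also needs the sign reduction you omitted. If \(\tau<0\), then for small \(t\ge0\) the time \(t+\tau\) is negative, where \(u^{(2)}\) has no established modulation decomposition. You must first exchange the solutions so that \(\tau\ge0\). With these additions the rest of your argument goes through, including \(\theta\equiv0\) from \(\gamma(0)=0\).
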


\begin{proof}
If the original member has phase \(\gamma_n(D_0)\), the phase
normalization means replacing it by
\(e^{-i\gamma_n(D_0)}u_n\).  This exact symmetry preserves its
equation, invariants, and all estimates.
The uniform bounds at every order proved in
Section~\ref{sec:construction} make the normalized initial data
precompact in each fixed Sobolev space.  Every subsequential limit has
the same centre, reflections, mass, energy, and outgoing bounds
\eqref{uniq:class}.  Theorem~\ref{thm:uniqueness} identifies any two
such limits up to a time shift and constant phase.

Both constants vanish under the stated normalization.  The scale
is strictly increasing on the common constructed tube.  If the time
shift were nonzero, exchange the two solutions if necessary so that
it is positive.  Cauchy uniqueness extends the identity back to time
zero, where the first representation has scale \(D_0\) and the second
has scale \(D_2>D_0\).

To apply local coordinate uniqueness to these two representations,
first compare their gradient norms.  Uniformly throughout the
constructed tube,
\[
 \|\nabla u\|_2
 =\frac{D}{L}\bigl(\|\nabla Q\|_2+o_{D_0\to\infty}(1)\bigr).
\]
The profile estimates and the uniform \(B/D\) remainder bound give
this formula.  Equality of the two fields implies
\(D_2/D_0=1+o_{D_0\to\infty}(1)\).  Their \(L^2\) profile overlap
also makes their phases close modulo \(2\pi\).  Thus, for the fixed
large starting section used in the construction, both representations
belong to the same uniform implicit-function neighborhood.  Local
coordinate uniqueness forces \(D_2=D_0\), a contradiction.  The time
shift is zero.  The common phase value zero at that section then fixes
the constant phase modulo \(2\pi\), which is the same phase factor.

Finally, if convergence of the full sequence failed in a fixed \(H^k\),
a subsequence would stay a positive distance from the unique limiting
datum in that norm.  Its uniform \(H^{k+1}\) bound would yield a
convergent subsubsequence, whose limit must be that same datum.  This
contradiction proves the assertion at every fixed order.
\end{proof}

The same difference estimate gives a rate of convergence for the terminal
construction.  Here it is important that each terminal datum has exactly
the limiting mass and energy.  We use \(N\) for its prescribed terminal
scale, so that \(N\) is a real parameter rather than a Fourier cutoff.

\begin{corollary}[Quantitative terminal approximation]\label{rate:corollary}
Choose the common section \(D_0\) sufficiently large, once and for all.
For \(N>D_0\), let \(u_N\) be the solution whose terminal profile at
scale \(N\) is \(V_{N,B(N)}\), normalized by
\[
 t_N(D_0)=0,\qquad \gamma_N(D_0)=0.
\]
Let \(u\) be the limiting solution with the same normalization.
There are constants \(C,P\), independent of \(N\), such that
\begin{equation}\label{rate:H1}
 \|u_N(0)-u(0)\|_{H^1(\mathbb T_L^2)}
                 \le C N^P e^{-N}.
\end{equation}
For every fixed integer \(k\ge0\), every \(0<\eta<1\), and every
compact interval \(I\subset\mathbb R\), there is a constant
\(C_{I,k,\eta}\) such that
\begin{equation}\label{rate:compact}
 \|u_N-u\|_{C(I;H^k(\mathbb T_L^2))}
                 \le C_{I,k,\eta}e^{-\eta N}
\end{equation}
for all sufficiently large \(N\).  The constants in these estimates
are independent of the terminal scale.
\end{corollary}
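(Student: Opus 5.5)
The plan is to reuse the common-scale difference argument of Section~\ref{sec:uniqueness}, now comparing the terminal solution \(u_N\) with the limit \(u\) at their crossings of a common scale \(D\in[D_0,N]\). The only change is that the boundary term is no longer zero in the limit: it is an explicit quantity evaluated at the terminal section \(D=N\).

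At \(D=N\), the field \(z_N(N)=V_{N,B(N)}\) has zero remainder. Proposition~\ref{uniq:refined} applies to \(u\), since it lies in the class \eqref{uniq:class}, and gives \(\|w(N)\|_{k,\lambda}\le C_kN^{p_k}B(N)^2\). Lemma~\ref{prof:energy-comparison} then gives \(|b(N)-B(N)|\le CN^pB(N)^3\). Consequently the full-field difference \(v(N)=z(N)-z_N(N)\) satisfies \(\|v(N)\|_{1,\lambda}\le CN^pB(N)^2\asymp CN^{p'}e^{-N}\).

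Next I check that the estimates \eqref{uniq:slaving}--\eqref{uniq:G-bound} remain valid when the second orbit is \(u_N\) on \([D_0,N]\) instead of an infinite orbit. Only the individual bound \(\|w_N\|_{k,\lambda}\le C_kD^{p_k}B^2\) is needed. The main obstacle is proving this bound for the finite terminal family, uniformly in \(N\).

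For that bound I will rerun the quasistatic argument of Proposition~\ref{uniq:refined} for \(u_N\), now with the terminal value \(z=y-r^0=-r^0\) at \(D=N\). This term has size \(O(B(N)^2)\) by Lemma~\ref{uniq:inverse}. Backward integration of \eqref{uniq:quasistatic-gronwall} then gives \(X_q(D)\le (N/D)^{C_q}B(N)^2+C_qD^{p}B(D)^2\lesssim D^{p}B(D)^2\), because \(B(N)^2(N/D)^{C}\le CB(D)^2\) by the exponential decay of \(B\). The inputs this needs (Proposition~\ref{con:all-orders}, and the modulation bounds \(|D_s|\lesssim DB\) and \(|b_s|\lesssim DB^2\)) are already uniform in \(N\).

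With these individual bounds in hand, the difference energy \(\mathfrak D=\langle\mathcal L_1\xi,\xi\rangle\) satisfies \eqref{uniq:homogeneous-energy} on \([D_0,N]\). Hence \(\mathfrak D(D_0)\le (N/D_0)^{C_*}\mathfrak D(N)\le CN^{P}e^{-2N}\). By \eqref{uniq:normal-difference}, \(\|v(D_0)\|_{1,\lambda}\le CN^Pe^{-N}\). The clock and phase differences at the common section both vanish by normalization, so \(u_N(0)-u(0)\) is exactly \(v(D_0)\) up to a fixed physical rescaling. This proves \eqref{rate:H1}. The scale \(D_0\) is fixed, so the scaled and physical norms are comparable with constants depending only on \(D_0\).

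For \eqref{rate:compact}, I interpolate the \(H^1\) bound with the uniform \(H^{k'}\) bounds at time zero given by Proposition~\ref{con:all-orders}. Taking \(k'\) large enough makes the interpolation exponent at least \(\eta\), so \(\|u_N(0)-u(0)\|_{H^k}\le C e^{-\eta N}\). Both solutions have uniformly bounded \(H^{k+3}\) norms on the compact interval \(I\): for positive times this follows from \eqref{con:clock}, and for negative times from the global result of Proposition~\ref{glob:continuation} together with continuous dependence. The Lipschitz dependence of the \(H^k\) flow on bounded sets then propagates the difference over \(I\) with a constant \(C_{I,k}\). This gives \eqref{rate:compact} for all sufficiently large \(N\).
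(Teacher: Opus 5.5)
Your proposal is correct and follows essentially the same route as the paper: a finite-interval quasistatic refinement for \(u_N\) with the nonzero terminal value \(-r^0(N)\), a terminal difference of size \(N^pB(N)^2\) from the zero-energy branch comparison, backward integration of the homogeneous difference energy with only a polynomial loss, and then interpolation plus a Gronwall/continuous-dependence argument on compact intervals. The remaining differences are immaterial: you project with the limiting orbit rather than with \(\Pi_N\), and you use Lemma~\ref{prof:energy-comparison} instead of Lemma~\ref{uniq:energy-map} at \(D=N\).
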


\begin{proof}
Propositions~\ref{con:terminal-family} and \ref{con:all-orders}
apply to every sufficiently large terminal scale, with the same
constants at each fixed order.  The normalization is by exact
symmetries and does not change those estimates.  Each \(u_N\)
also extends globally by Proposition~\ref{glob:continuation},
since its mass is \(m\) and its energy is zero.

For the finite solution, a subscript \(N\) on an operator
means evaluation at
\((D,b_N(D))\). In particular, \(g_N^0(D)=g^0(D,b_N(D))\),
and \(\mathcal A_N=\Pi_NJ\mathcal L_N\Pi_N\).
The current scale is \(\lambda=L/D\); at the terminal section
we write \(\lambda_N=L/N\).

We first need a finite-interval version of
Proposition~\ref{uniq:refined}.  On \(D_0\le D\le N\), put
\[
 y_N=\Pi_N w_N,\qquad
 r_N^0=-\mathcal A_N^{-1}g_N^0,\qquad
 z_N^{\mathrm{err}}=y_N-r_N^0.
\]
The graph energy of this corrected remainder is
\[
 X_q(D)^2=\sum_{a=0}^{(q-1)/2}
 \big\langle\mathcal L_N\mathcal A_N^a z_N^{\mathrm{err}},
                  \mathcal A_N^a z_N^{\mathrm{err}}\big\rangle,
 \qquad q\ge3\ \hbox{odd}.
\]
Its dependence on \(N\) is suppressed in the notation.
All identities used to derive
\eqref{uniq:quasistatic-gronwall} are local identities.
The old bounds \eqref{con:uniform-high}, with their constants
independent of \(N\), therefore give for the graph-energy square
root \(X_q\) of \(z_N^{\mathrm{err}}\), at any fixed odd order
\(q\ge3\),
\begin{equation}\label{rate:finite-quasistatic}
 |(X_q)_D|\le \frac{C_q}{D}X_q+
                         C_qD^{p_q}B(D)^2.
\end{equation}
The inverse and coefficient estimates are uniform on this interval.
There is a nonzero terminal boundary term:
\[
 w_N(N)=0,\qquad
 z_N^{\mathrm{err}}(N)=-r_N^0(N),\qquad
 X_q(N)\le C_qN^{p_q}B(N)^2.
\]
In particular, the corrected error is not assigned a zero terminal
value.  Integrating \eqref{rate:finite-quasistatic} backwards gives
\begin{equation}\label{rate:finite-tail}
 \begin{split}
 X_q(D)\le{}&
 C_q(N/D)^{C_q}N^{p_q}B(N)^2\\
 &+C_q\int_D^N(r/D)^{C_q}r^{p_q}B(r)^2\,dr.
 \end{split}
\end{equation}
Both terms are bounded by \(C_qD^{P_q}B(D)^2\), uniformly in
\(N\ge D\).  Indeed,
\[
 \frac{B(r)^2}{B(D)^2}
 \le C(r/D)^{-1/2}e^{-(r-D)}\qquad(r\ge D\ge D_0).
\]
Writing \(r=D+x\), the remaining factors are bounded by a fixed
power of \(D\) times \((1+x/D)^a e^{-x}\).  The latter function
has a uniform supremum and integral for each fixed \(a\).
This estimates both the endpoint and the integral in
\eqref{rate:finite-tail}.
Adding \(r_N^0\) and then using the quadratic normal estimate
\eqref{mod:normal-part}, we obtain
\begin{equation}\label{rate:finite-refined}
 \|w_N(D)\|_{q,\lambda}
       \le C_qD^{P_q}B(D)^2,\qquad D_0\le D\le N,
\end{equation}
with constants independent of \(N\).
Even and lower orders follow from the next odd order.

We now compare \(u_N\) and \(u\) at the same scale, with their
own physical clocks and modulation phases.  Denote their
phase-removed full fields by
\[
 Z_N(D)=V_{D,b_N}+w_N,\qquad
 Z(D)=V_{D,b}+w,
\]
and let
\[
 \xi_N=\Pi_N(Z_N-Z),\qquad
 \mathfrak D_N(D)=\langle\mathcal L_N\xi_N,\xi_N\rangle.
\]
The proof of \eqref{uniq:homogeneous-energy} uses the local
mass and energy constraints, the low-order modulation estimates,
and the improved remainder bounds.  It uses the infinite interval
only in its final passage of the boundary term to zero.
Here \eqref{rate:finite-refined} supplies the improved bounds
for the finite solution, with constants independent of \(N\).
Thus every term in \eqref{uniq:all-difference-terms}, including
the different clocks and phases, obeys the same estimate on
\([D_0,N]\).  Enlarging the fixed \(D_0\) for this finite
collection of low-order estimates, we have
\begin{equation}\label{rate:finite-difference}
 \mathfrak D_N\asymp\|Z_N-Z\|_{1,\lambda}^2,\qquad
 |(\mathfrak D_N)_D|\le\frac{C_*}{D}\mathfrak D_N,
\end{equation}
where \(C_*\) is independent of \(N\).
No assertion that \(u_N\) belongs to the infinite outgoing
class is needed.

At \(D=N\), the finite solution has \(b_N=B(N)\) and \(w_N=0\).
The infinite solution has
\(\|w(N)\|_{1,\lambda_N}\le CN^pB(N)^2\).
Applying Lemma~\ref{uniq:energy-map} to these two exactly
zero-energy fields gives
\[
 |B(N)-b(N)|\le CN^pB(N)^3.
\]
The smooth dependence of \(V_{N,b}\) on \(b\) consequently gives
\[
 \mathfrak D_N(N)^{1/2}\le CN^pB(N)^2.
\]
Integrating \eqref{rate:finite-difference} without discarding
this terminal error yields
\[
 \mathfrak D_N(D_0)^{1/2}
 \le (N/D_0)^{C_*/2}\mathfrak D_N(N)^{1/2}
 \le C N^P e^{-N}.
\]
At \(D_0\), both physical times and both modulation phases are
zero.  Moreover, \(\lambda_0=L/D_0\) is fixed.  Therefore
\(Z_N(D_0)-Z(D_0)=u_N(0)-u(0)\), and the scaled norm is
equivalent to the physical \(H^1\) norm.  This proves
\eqref{rate:H1}.

For every fixed integer \(K\), the common-section estimates give
\[
 \sup_{N\text{ large}}\|u_N(0)\|_{H^K}+\|u(0)\|_{H^K}<\infty.
\]
If \(1<k<K\), interpolation between \(H^1\) and \(H^K\) gives
\[
 \|u_N(0)-u(0)\|_{H^k}
 \le C_K\bigl(N^Pe^{-N}\bigr)^{(K-k)/(K-1)}.
\]
Given \(\eta<1\), choose \(K\) so that
\((K-k)/(K-1)>\eta\); the polynomial factor is then absorbed
by the remaining exponential margin.
The cases \(k=0,1\) follow directly from \eqref{rate:H1}.
Hence the initial-data estimate in every fixed order is
\(C_{k,\eta}e^{-\eta N}\).

Finally, choose an integer \(j\ge\max\{k,2\}\) and a compact
interval \(J\) containing both \(I\) and zero.  The limiting
solution is bounded in \(H^j\) on \(J\).  The algebra property
of \(H^j(\mathbb T_L^2)\) and Duhamel's formula give, on a
bounded \(H^j\) neighborhood of this solution,
\[
 \|u_N(t)-u(t)\|_{H^j}
 \le \|u_N(0)-u(0)\|_{H^j}
 +C_J\left|\int_0^t\|u_N(r)-u(r)\|_{H^j}\,dr\right|.
\]
A bootstrap and Gronwall keep \(u_N\) in that neighborhood on
\(J\), for all sufficiently large \(N\), and preserve the
initial exponential rate.  The argument is applied in both
time directions from zero.  Restricting to \(I\) and using the
embedding into \(H^k\) proves \eqref{rate:compact}.
\end{proof}


\begin{thebibliography}{99}

\bibitem{Banica04}
V.~Banica,
\emph{Remarks on the blow-up for the Schr\"odinger equation with critical mass on a plane domain},
Ann. Sc. Norm. Super. Pisa Cl. Sci. (5) \textbf{3} (2004), no.~1, 139--170.

\bibitem{BCD11}
V.~Banica, R.~Carles and T.~Duyckaerts,
\emph{Minimal blow-up solutions to the mass-critical inhomogeneous NLS equation},
Comm. Partial Differential Equations \textbf{36} (2011), no.~3, 487--531.

\bibitem{Bourgain93}
J.~Bourgain,
\emph{Fourier transform restriction phenomena for certain lattice subsets and applications to nonlinear evolution equations. I. Schr\"odinger equations},
Geom. Funct. Anal. \textbf{3} (1993), no.~2, 107--156.

\bibitem{Bourgain96}
J.~Bourgain,
\emph{On the growth in time of higher Sobolev norms of smooth solutions of Hamiltonian PDE},
Internat. Math. Res. Notices \textbf{1996}, no.~6, 277--304.

\bibitem{Bourgain00}
J.~Bourgain,
\emph{Problems in Hamiltonian PDE's},
Geom. Funct. Anal., Special Volume, Part~I (2000), 32--56.

\bibitem{BourgainWang97}
J.~Bourgain and W.~Wang,
\emph{Construction of blowup solutions for the nonlinear Schr\"odinger equation with critical nonlinearity},
Ann. Scuola Norm. Sup. Pisa Cl. Sci. (4) \textbf{25} (1997), no.~1--2, 197--215.

\bibitem{BrezisGallouet80}
H.~Br\'ezis and T.~Gallou\"et,
\emph{Nonlinear Schr\"odinger evolution equations},
Nonlinear Anal. \textbf{4} (1980), no.~4, 677--681.

\bibitem{BGT03}
N.~Burq, P.~G\'erard and N.~Tzvetkov,
\emph{Two singular dynamics of the nonlinear Schr\"odinger equation on a plane domain},
Geom. Funct. Anal. \textbf{13} (2003), no.~1, 1--19.

\bibitem{BGT04}
N.~Burq, P.~G\'erard and N.~Tzvetkov,
\emph{Strichartz inequalities and the nonlinear Schr\"odinger equation on compact manifolds},
Amer. J. Math. \textbf{126} (2004), no.~3, 569--605.

\bibitem{Cazenave03}
T.~Cazenave,
\emph{Semilinear Schr\"odinger Equations},
Courant Lecture Notes in Mathematics, vol.~10,
New York University, Courant Institute of Mathematical Sciences, New York;
American Mathematical Society, Providence, RI, 2003.

\bibitem{ChenPlane}
X. Chen,
\emph{Existence of modified wave operators and infinite cascade result for
a half wave Schr\"odinger equation on the plane},
J. Funct. Anal. \textbf{286} (2024), no.~2, Paper No.~110222, 74~pp.
\bibitem{Chen26.1}
X. Chen,
\emph{Infinite cascades for the cubic nonlinear half-wave equation on the line}, preprint, \newblock hal-05731979, 2026.

\bibitem{Chen26.2}
X. Chen, \emph{Infinite cascades for the defocusing cubic half-wave equation on the torus}, preprint, \newblock arXiv:2609.24930, 2026. 

\bibitem{CKSTT10}
J.~Colliander, M.~Keel, G.~Staffilani, H.~Takaoka and T.~Tao,
\emph{Transfer of energy to high frequencies in the cubic defocusing nonlinear Schr\"odinger equation},
Invent. Math. \textbf{181} (2010), no.~1, 39--113.

\bibitem{CMM11}
R.~C\^ote, Y.~Martel and F.~Merle,
\emph{Construction of multi-soliton solutions for the \(L^2\)-supercritical gKdV and NLS equations},
Rev. Mat. Iberoam. \textbf{27} (2011), no.~1, 273--302.

\bibitem{Dodson15}
B.~Dodson,
\emph{Global well-posedness and scattering for the mass critical nonlinear Schr\"odinger equation with mass below the mass of the ground state},
Adv. Math. \textbf{285} (2015), 1589--1618.

\bibitem{GG10}
P.~G\'erard and S.~Grellier,
\emph{The cubic Szeg\H{o} equation},
Ann. Sci. \'Ec. Norm. Sup\'er. (4) \textbf{43} (2010), no.~5, 761--810.

\bibitem{GG12}
P.~G\'erard and S.~Grellier,
\emph{Effective integrable dynamics for a certain nonlinear wave equation},
Anal. PDE \textbf{5} (2012), no.~5, 1139--1155.

\bibitem{GG17}
P.~G\'erard and S.~Grellier,
\emph{The Cubic Szeg\H{o} Equation and Hankel Operators},
Ast\'erisque \textbf{389}, Soci\'et\'e Math\'ematique de France, Paris, 2017.

\bibitem{GLPR18}
P.~G\'erard, E.~Lenzmann, O.~Pocovnicu and P.~Rapha\"el,
\emph{A two-soliton with transient turbulent regime for the cubic half-wave equation on the real line},
Ann. PDE \textbf{4} (2018), no.~1, Paper No.~7.

\bibitem{GK15}
M.~Guardia and V.~Kaloshin,
\emph{Growth of Sobolev norms in the cubic defocusing nonlinear Schr\"odinger equation},
J. Eur. Math. Soc. \textbf{17} (2015), no.~1, 71--149;
erratum, \textbf{19} (2017), no.~2, 601--602.

\bibitem{Hani14}
Z.~Hani,
\emph{Long-time instability and unbounded Sobolev orbits for some periodic nonlinear Schr\"odinger equations},
Arch. Ration. Mech. Anal. \textbf{211} (2014), no.~3, 929--964.

\bibitem{HPTV15}
Z.~Hani, B.~Pausader, N.~Tzvetkov and N.~Visciglia,
\emph{Modified scattering for the cubic Schr\"odinger equation on product spaces and applications},
Forum Math. Pi \textbf{3} (2015), e4, 63~pp.

\bibitem{HK2026}
S.~Herr, B.~Kwak, \emph{Global well-posedness of the cubic nonlinear Schrödinger equation on $\mathbb{T}^2$}, Inventiones mathematicae (2026): 1-117.
\bibitem{KLR13}
J.~Krieger, E.~Lenzmann and P.~Rapha\"el,
\emph{Nondispersive solutions to the \(L^2\)-critical half-wave equation},
Arch. Ration. Mech. Anal. \textbf{209} (2013), no.~1, 61--129.

\bibitem{KMR09}
J.~Krieger, Y.~Martel and P.~Rapha\"el,
\emph{Two-soliton solutions to the three-dimensional gravitational Hartree equation},
Comm. Pure Appl. Math. \textbf{62} (2009), no.~11, 1501--1550.

\bibitem{Kwong89}
M.~K.~Kwong,
\emph{Uniqueness of positive solutions of \(\Delta u-u+u^p=0\) in \(\mathbb R^n\)},
Arch. Ration. Mech. Anal. \textbf{105} (1989), no.~3, 243--266.

\bibitem{LeCozMR16}
S.~Le~Coz, Y.~Martel and P.~Rapha\"el,
\emph{Minimal mass blow up solutions for a double power nonlinear Schr\"odinger equation},
Rev. Mat. Iberoam. \textbf{32} (2016), no.~3, 795--833.

\bibitem{MartelMerle06}
Y.~Martel and F.~Merle,
\emph{Multi solitary waves for nonlinear Schr\"odinger equations},
Ann. Inst. H. Poincar\'e C Anal. Non Lin\'eaire \textbf{23} (2006), no.~6, 849--864.

\bibitem{MMR15}
Y.~Martel, F.~Merle and P.~Rapha\"el,
\emph{Blow up for the critical gKdV equation. III: Exotic regimes},
Ann. Sc. Norm. Super. Pisa Cl. Sci. (5) \textbf{14} (2015), no.~2, 575--631.

\bibitem{MartelRaphael18}
Y.~Martel and P.~Rapha\"el,
\emph{Strongly interacting blow up bubbles for the mass critical nonlinear Schr\"odinger equation},
Ann. Sci. \'Ec. Norm. Sup\'er. (4) \textbf{51} (2018), no.~3, 701--737.

\bibitem{Merle90}
F.~Merle,
\emph{Construction of solutions with exactly \(k\) blow-up points for the Schr\"odinger equation with critical nonlinearity},
Comm. Math. Phys. \textbf{129} (1990), no.~2, 223--240.

\bibitem{Merle93}
F.~Merle,
\emph{Determination of blow-up solutions with minimal mass for nonlinear Schr\"odinger equations with critical power},
Duke Math. J. \textbf{69} (1993), no.~2, 427--454.

\bibitem{MR05}
F.~Merle and P.~Rapha\"el,
\emph{The blow-up dynamic and upper bound on the blow-up rate for critical nonlinear Schr\"odinger equation},
Ann. of Math. (2) \textbf{161} (2005), no.~1, 157--222.

\bibitem{MR06}
F.~Merle and P.~Rapha\"el,
\emph{On a sharp lower bound on the blow-up rate for the \(L^2\) critical nonlinear Schr\"odinger equation},
J. Amer. Math. Soc. \textbf{19} (2006), no.~1, 37--90.

\bibitem{Pocovnicu11}
O.~Pocovnicu,
\emph{Explicit formula for the solution of the Szeg\H{o} equation on the real line and applications},
Discrete Contin. Dyn. Syst. \textbf{31} (2011), no.~3, 607--649.

\bibitem{RaphaelSzeftel11}
P.~Rapha\"el and J.~Szeftel,
\emph{Existence and uniqueness of minimal blow-up solutions to an inhomogeneous mass critical NLS},
J. Amer. Math. Soc. \textbf{24} (2011), no.~2, 471--546.

\bibitem{Weinstein83}
M.~I.~Weinstein,
\emph{Nonlinear Schr\"odinger equations and sharp interpolation estimates},
Comm. Math. Phys. \textbf{87} (1983), no.~4, 567--576.

\bibitem{Weinstein85}
M.~I.~Weinstein,
\emph{Modulational stability of ground states of nonlinear Schr\"odinger equations},
SIAM J. Math. Anal. \textbf{16} (1985), no.~3, 472--491.

\bibitem{XuWaveguide}
H. Xu,
\emph{Unbounded Sobolev trajectories and modified scattering theory for
a wave guide nonlinear Schr\"odinger equation},
Math. Z. \textbf{286} (2017), nos.~1--2, 443--489.
\end{thebibliography}
\end{document}